\documentclass[11pt]{article}
\usepackage[margin=1in]{geometry}
\usepackage[T1]{fontenc}
\usepackage{lmodern,microtype}
\usepackage{amsmath,amssymb,amsthm,mathtools}
\usepackage{enumitem,booktabs,longtable,array}
\usepackage{aliascnt}
\usepackage[hidelinks]{hyperref}
\usepackage[nameinlink,capitalise]{cleveref}
\hypersetup{pdftitle={Cutoff for the Overhand Shuffle},pdfauthor={Yunjiang Jiang},pdfsubject={Revised manuscript; AI assistance by GPT-6 Astra}}
\numberwithin{equation}{section}
\newtheorem{theorem}{Theorem}[section]
\newaliascnt{lemma}{theorem}\newtheorem{lemma}[lemma]{Lemma}\aliascntresetthe{lemma}
\newaliascnt{proposition}{theorem}\newtheorem{proposition}[proposition]{Proposition}\aliascntresetthe{proposition}
\newaliascnt{corollary}{theorem}\newtheorem{corollary}[corollary]{Corollary}\aliascntresetthe{corollary}
\theoremstyle{definition}
\newaliascnt{definition}{theorem}\aliascntresetthe{definition}
\newaliascnt{remark}{theorem}\aliascntresetthe{remark}
\crefname{lemma}{lemma}{lemmas}\Crefname{lemma}{Lemma}{Lemmas}
\crefname{proposition}{proposition}{propositions}\Crefname{proposition}{Proposition}{Propositions}
\crefname{corollary}{corollary}{corollaries}\Crefname{corollary}{Corollary}{Corollaries}
\crefname{remark}{remark}{remarks}\Crefname{remark}{Remark}{Remarks}
\newcommand{\Sn}{\mathfrak S_n}
\newcommand{\TV}{\mathrm{TV}}
\newcommand{\Ent}{\operatorname{Ent}}
\newcommand{\Var}{\operatorname{Var}}
\newcommand{\Cov}{\operatorname{Cov}}
\newcommand{\E}{\mathbb E}
\newcommand{\Pp}{\mathbb P}
\newcommand{\1}{\mathbf 1}
\newcommand{\eps}{\varepsilon}
\newcommand{\cD}{\mathcal D_n}
\newcommand{\cT}{\mathcal T}
\newcommand{\cE}{\mathcal E}
\newcommand{\cB}{\mathcal B}
\newcommand{\ip}[2]{\langle #1,#2\rangle}
\newcommand{\norm}[1]{\lVert #1\rVert}

\newcommand{\id}{\mathrm{id}}
\title{Cutoff for the Overhand Shuffle}
\author{Yunjiang Jiang\thanks{AI assistance: GPT-6 Astra, used in developing arguments, checking calculations, and preparing the exposition. The author is responsible for the mathematical content.}}
\date{September 2026\quad (revised manuscript)}
\begin{document}
\maketitle
\begin{abstract}
We study the overhand shuffle with a fixed cut probability $p\in(0,1)$. In the packet-reversal convention, each gap is cut independently with probability $p$, and each resulting packet is reversed in place. We prove total-variation cutoff at
\[
 \frac{p^2}{2(1-p)\pi^2}\,n^2\log n.
\]
The lower bound uses an exact finite-deck cosine eigenfunction and a uniform conditional-variance estimate. For the upper bound, a coalescing coupling reduces a transposition discrepancy to a symmetric killed two-card chain. Its principal decay rate is $(1-p)\pi^2/(p^2n^2)+o(n^{-2})$. A boundary-flux identity bounds the sum of adjacent discrepancy probabilities without a factor equal to the deck size. The remaining ingredient is a two-sided $L^1$ inequality on permutations, with constant at most $\sqrt n\exp\{O(\sqrt{\log n\log\log(n+2)})\}$. Its proof decomposes a fixed block-table fiber into assignment words and independent smaller permutations, and closes a recursion using a multivariate hypergeometric inequality and an explicit routing comparison.

We also prove relative-entropy cutoff with an $O_p(n^2)$ window about the intrinsic entropy transition time. Its leading coefficient lies between the total-variation coefficient and twice that value; a sharp entropy location is not obtained. The proof states the normalization and hypotheses of its established functional-inequality inputs. An appendix proves the intermediate static bounds and explains the change from one-scale averaging to recursive fibers.
\end{abstract}
\tableofcontents

\section{Introduction and main results}\label{sec:intro}
\subsection{Historical setting}
Card shuffling gives concrete examples of a basic question in finite probability: how long does a stochastic dynamics take to forget a prescribed initial state? A mixing-time order describes the scale of this loss of information. A cutoff theorem says more: for any fixed proportionate change in time about a specified center, the total-variation distance changes asymptotically from one to zero. Such a statement distinguishes the disappearance of detectable global information from the equilibration of a single card.

Aldous and Diaconis discussed probabilistic models of physical shuffling in \cite{AldousDiaconis1986}. Pemantle's analysis of the overhand shuffle established a lower bound of order $n^2$ and an upper bound of order $n^2\log n$ \cite{Pemantle1989}. Jonasson used an extension of Wilson's eigenfunction method to prove the matching lower order, giving $\Theta_p(n^2\log n)$ mixing for fixed $p$ \cite{Jonasson2006,Wilson2004,Wilson2003}. Diaconis and Pal explicitly listed overhand cutoff as an open problem in their 2022 discussion \cite{DiaconisPal2022}. The result here sharpens the order-of-magnitude statement by identifying a leading coefficient and a total-variation transition.

For comparison, Diaconis--Shahshahani established the classical random-transposition mixing asymptotics \cite{DiaconisShahshahani1981}. Bayer--Diaconis identified the $(3/2)\log_2 n$ total-variation scale for Gilbert--Shannon--Reeds riffle shuffling \cite{BayerDiaconis1992}; Sellke proved asymmetric-riffle cutoff \cite{Sellke2022}, and Sellke--Shi--Wang study general cut distributions \cite{SellkeShiWang2025}. For local dynamics, Lacoin proved cutoff for random adjacent transpositions and simple exclusion \cite{Lacoin2016}, and Nam--Nestoridi treated cyclic systematic scans \cite{NamNestoridi2019}. Bernstein--Nestoridi established random-to-random insertion cutoff using detailed spectral information \cite{BernsteinNestoridi2019}. These are different models: their time normalizations and constants are not imported into the argument below.

Our upper bound instead connects a dynamical coupling to a static inequality involving both positions and labels. Complete-transposition functional inequalities enter as auxiliary tools, through the interchange-process spectral-gap theorem \cite{CaputoLiggettRichthammer2010} and Salez's multislice log-Sobolev theorem \cite{Salez2021}. The entropy argument uses the general Wasserstein-to-entropy theorem of Caputo--M\"unch--Salez \cite{CaputoMunchSalez2025}. Nash estimates for finite chains provide the analytic setting for the two-card smoothing argument \cite{DiaconisSaloffCoste1996}; we prove the finite-grid instance needed here.

\subsection{Model, distances, and statements}
We use function composition: $(ab)(i)=a(b(i))$. A deck $\sigma\in\Sn$ is written with $\sigma(i)$ equal to the label in position $i$. Independently cut each of the $n-1$ gaps with probability $p\in(0,1)$ and reverse each packet. If $R_c$ is the resulting position permutation, the step is $\sigma\mapsto\sigma R_c$. Let $P_n=P_{n,p}$ be this discrete-time kernel and $U_n$ the uniform law on $\Sn$. Every shuffle in this paper is one complete cut-and-reverse pass, not a single packet operation.

All logarithms are natural and times are nonnegative integers. For a finite probability measure $\pi$, let $\norm{f}_{L^a(\pi)}=(\sum_x\pi(x)|f(x)|^a)^{1/a}$, with the usual supremum convention at $a=\infty$. Total variation and relative entropy are
\[
 \norm{\nu-\pi}_{\TV}=\frac12\sum_x|\nu(x)-\pi(x)|,
 \qquad H(\nu\mid\pi)=\sum_x\nu(x)\log\frac{\nu(x)}{\pi(x)},
\]
where $0\log0=0$ and $\pi$ has full support. Write
\[
 \mu_t=\delta_{\id}P_n^t,\quad d_n(t)=\norm{\mu_t-U_n}_{\TV},\quad
 t_{\rm mix}^{(n)}(\eps)=\min\{t:d_n(t)\le\eps\}.
\]
The distances are the same from every deterministic starting deck, by relabeling. Throughout the dynamical argument,
\[
 q=1-p,\qquad D=\frac{q}{p^2},\qquad \kappa=D\pi^2,
 \qquad t_n^\star=\frac{n^2\log n}{2\kappa}.
\]
Constants denoted $C,c$ are positive universal constants whose values may change between displays. A subscript $p$ allows dependence on the fixed parameter $p$, never on $n$. Asymptotic assertions involving the shuffle always keep $p$ fixed.

\begin{theorem}[Total-variation cutoff]\label{thm:TV-main}
For every fixed $p\in(0,1)$ and $\eps\in(0,1)$,
\[
 t_{\rm mix}^{(n)}(\eps)=\left(\frac{p^2}{2(1-p)\pi^2}+o(1)\right)n^2\log n.
\]
More explicitly, for $0<\delta<1$ the distance at $\lfloor(1-\delta)t_n^\star\rfloor$ tends to one, whereas for every $\delta>0$ the distance at $\lceil(1+\delta)t_n^\star\rceil$ tends to zero. At $p=1/2$, the location is $n^2\log n/(4\pi^2)$.
\end{theorem}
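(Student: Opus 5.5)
The proof splits into a lower bound at $(1-\delta)t_n^\star$ and an upper bound at $(1+\delta)t_n^\star$, following the outline in the abstract.

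\emph{Lower bound.} I will use Wilson's method with the statistic
\[
\Phi(\sigma)=\sum_{i=1}^n \cos\Bigl(\tfrac{\pi(\sigma(i)-1/2)}{n}\Bigr)\cos\Bigl(\tfrac{\pi(i-1/2)}{n}\Bigr),
\]
or rather its single-card version $\phi(\sigma)=\sum_i \cos(\pi(i-1/2)/n)\,g(\sigma^{-1}\text{-position data})$. Under one pass, a tagged card's position performs a symmetric reflected walk. The first step is to verify that a cosine in the position is an \emph{exact} eigenfunction of the one-card chain, with eigenvalue $\lambda_n=1-\kappa/n^2+O(n^{-4})$. The mean then gives $\E\Phi(\mu_t)=\lambda_n^t\Phi(\id)\asymp n\lambda_n^t$, which at $t=(1-\delta)t_n^\star$ is of order $n\cdot n^{-(1-\delta)/2}=n^{(1+\delta)/2}$. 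The second step is a uniform bound $\Var_{\mu_t}(\Phi)=O_p(n)$. I plan to prove it by conditioning on the cut pattern and bounding the conditional variance: pair correlations between distinct cards decay because only packets containing both cards correlate them, and this yields a geometric sum over separations. Under $U_n$ the variance is also $O(1)\cdot n$. Chebyshev then separates the two laws, so the distance tends to one.

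\emph{Upper bound.} I will use the coalescing coupling. Run two decks that differ by a transposition, applying the same cuts and coupling at the discrepancy so that the two discrepant labels' positions evolve as a symmetric two-card chain killed on meeting or on becoming adjacent and swapped. By the principal-eigenvalue estimate for the killed chain, the survival probability decays like $\exp(-(\kappa+o(1))t/n^2)$, up to a prefactor handled by the Nash-type finite-grid smoothing. To pass from a single transposition to $\mu_t$ versus $U_n$, I write $\mu_t-U_n$ via the two-sided $L^1$ inequality. That inequality bounds $\norm{\mu_t-U_n}_{\TV}$ by $C_n=\sqrt n\,e^{O(\sqrt{\log n\log\log n})}$ times the sum over adjacent transpositions $\tau$ of $\norm{\mu_t-\mu_t\tau}_{\TV}$. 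The boundary-flux identity bounds this sum of adjacent discrepancy probabilities by $O(1)$ times the survival probability of the killed chain, rather than $n$ times it. The total is then
\[
\sqrt n\, e^{o(\log n)}\, e^{-(1+\delta)\frac{\log n}{2}(1+o(1))}\to0 .
\]
This arithmetic explains why the $\sqrt n$ constant exactly matches the location $t_n^\star$.

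\emph{Main obstacle.} The hardest step is the static $L^1$ inequality with constant $\sqrt n\,e^{o(\log n)}$. A naive path or comparison argument would lose a factor of $n$ and shift the location by a factor of $2$. I would first try the recursive fiber decomposition: fix block tables and split into assignment words plus independent smaller permutations. Within each block, hypergeometric concentration controls the assignment words. I would then close a recursion $C_n\le C_{n/m}\,\sqrt m\,(1+o(1))$ with $m=e^{\sqrt{\log n\log\log n}}$, which accumulates precisely the stated subpolynomial loss. Secondary difficulties are the sharp killed-chain eigenvalue $(1+o(1))\kappa/n^2$, including the verification that the boundary and adjacency defects are $o(n^{-2})$, and the boundary-flux identity itself.
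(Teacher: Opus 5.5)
Your architecture matches the paper's. On the lower side you use the correlation statistic $\sum_a f(a)f(\sigma^{-1}(a))$ with its exact one-card eigenvalue and an $O_p(n)$ variance. On the upper side you use the coalescing discrepancy coupling, the killed pair kernel with $n^2(1-\rho_n)\to\kappa$, Nash smoothing, the flux bound on $\sum_i a_{2m+s}(i,i+1)$, and the static inequality.

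\textbf{The gap.} The missing link is the step you state as ``that inequality bounds $\norm{\mu_t-U_n}_{\TV}$ by $C_n$ times the sum over adjacent $\tau$ of $\norm{\mu_t-\mu_t\tau}_{\TV}$.'' \Cref{thm:static} is genuinely two-sided. Applied to $g_t=d\mu_t/dU_n$, its right side is $B_L(g_t)+B_R(g_t)$. This contains both the label-swap terms $2\norm{s_i\mu_t-\mu_t}_{\TV}$ and the position-swap terms $\E_{U_n}|g_t(\sigma s_i)-g_t(\sigma)|$.

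No one-sided version holds at scale $\sqrt n$. The half-deck event in the appendix forces a one-sided constant of order $n$, which would only give the coefficient $1/\kappa$. Your coupling controls only the label side, because $s_i\mu_t=\delta_{s_i}P_n^t$ is the law from a neighbouring start. The image of $\mu_t$ under $\sigma\mapsto\sigma s_i$ is not the law of the chain from any starting deck, so the coupling says nothing about it directly.

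The missing idea is inversion invariance. Each $R_c$ is an involution and the cut vectors are i.i.d., so $(R_{c_1}\cdots R_{c_t})^{-1}=R_{c_t}\cdots R_{c_1}$ again has law $\mu_t$. Hence $g_t(\sigma^{-1})=g_t(\sigma)$. Substituting $\sigma\mapsto\sigma^{-1}$ in the uniform expectation turns each right gradient into the corresponding left one, giving $\cB_n(g_t)=4\sum_i\delta_i(t)$. With this identity your final arithmetic is exactly the paper's.

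\textbf{Two smaller points.} In the lower bound, your pair-covariance computation should produce the \emph{one-step} conditional variance $O_p(1/n)$ given $\sigma_t$. The covariances do decay geometrically in separation, although cards in neighbouring packets are also correlated through the cut bits between them. This bound must then be propagated through $\E[\Phi(\sigma_{t+1})\mid\sigma_t]=\lambda_n\Phi(\sigma_t)$, i.e.\ $V_{t+1}\le B_p/n+\lambda_n^2V_t$. Time-$t$ pair correlations are not localized: under $U_n$ every pair has covariance of order $1/n$.

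In your static sketch, the fiber decomposition does not yield $C_n\le\sqrt m\,(1+o(1))\,C_{n/m}$. It yields $C_n^\star\le C[R^2\sqrt n+R\log n\,A(4n/R^2)]$. That is a loss of a factor $C\log n$ per level, coming from routing across assignment gaps of length about $R\log n$, plus an additive term. Balancing these with $\log R\asymp\sqrt{\log n\log\log n}$ is what produces the stated envelope.
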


Let $s_i=(i,i+1)$, and define
\begin{align}
 B_L(f)&=\sum_{i=1}^{n-1}\E_{U_n}|f(s_i\sigma)-f(\sigma)|,\nonumber\\
 B_R(f)&=\sum_{i=1}^{n-1}\E_{U_n}|f(\sigma s_i)-f(\sigma)|,\qquad
 \cB_n(f)=B_L(f)+B_R(f).\label{eq:Bdef}
\end{align}
Thus $B_L$ exchanges neighboring labels, and $B_R$ exchanges neighboring positions.
\begin{theorem}[Two-sided permutation inequality]\label{thm:static}
For every $n\ge2$ and $f:\Sn\to\mathbb R$,
\begin{equation}\label{eq:static-main}
 \norm{f-\E_{U_n}f}_{L^1(U_n)}
 \le C\sqrt n\exp\!\left(C\sqrt{\log n\log\log(n+2)}\right)\cB_n(f).
\end{equation}
The optimal constant has order $n^{1/2+o(1)}$ in the sense that it is at least $c\sqrt n$ and at most $n^{1/2+\eta}$ times a constant depending on every fixed $\eta>0$.
\end{theorem}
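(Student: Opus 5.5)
The lower bound comes from an explicit test function; the upper bound from a recursion over block scales. \emph{Lower bound.} Let $m=\lfloor n/2\rfloor$, let $S(\sigma)=\sum_{i\le m}\sigma(i)$, and let $f=\1\{S>\operatorname{med}_{U_n}S\}$. Then $\norm{f-\E f}_{L^1}\ge c$.

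Under $U_n$, $S$ is a sum of $m$ labels drawn without replacement. So it has standard deviation of order $n^{3/2}$ and a local limit theorem with point masses $O(n^{-3/2})$.
\begin{itemize}
\item For $B_R$, only the middle gap $i=m$ changes $S$. It does so by at most $n$, so the indicator flips with probability $O(n\cdot n^{-3/2})=O(n^{-1/2})$.
\item For $B_L$, the swap $s_i$ changes $S$ by at most $1$, so it flips $f$ with probability $O(n^{-3/2})$. Summing over the $n-1$ label gaps gives $O(n^{-1/2})$.
\end{itemize}
Hence $\cB_n(f)=O(n^{-1/2})$, and the optimal constant is at least $c\sqrt n$.

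\emph{Upper bound: reduction to a recursion.} Let $K(n)$ be the best constant in \eqref{eq:static-main}. I aim for a recursion of the form
\[
K(n)\le (1+\theta)\,K(m)+C\sqrt n\,\Lambda(n/m),
\]
where $m\mid n$ (after padding or rounding), $\theta$ is small, and $\Lambda$ grows at most polynomially in $k=n/m$. Cut positions into $k$ consecutive blocks of size $m$, and labels into $k$ consecutive value-blocks.

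The block table $N(\sigma)=(N_{ab})$ records how many labels of value-block $b$ lie in position-block $a$. Given $N$, a uniform $\sigma$ is described by three independent pieces:
\begin{enumerate}
\item an assignment word: for each position block, which slots carry which value-block, which is a uniform multinomial arrangement;
\item for each value-block, which labels go where, which is a uniform permutation;
\item for each position block, an internal order, which is a uniform permutation of size $m$.
\end{enumerate}
The triangle inequality gives
\[
\norm{f-\E f}_{L^1}\le \E\norm{f-\E[f\mid N]}_{L^1(\cdot\mid N)}+\norm{\E[f\mid N]-\E f}_{L^1}.
\]

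\emph{The fiber term.} Within a fiber, I apply the induction hypothesis to each smaller permutation piece, using an $L^1$ tensorization that is valid for independent coordinates via a telescoping sum. The adjacent swaps inside a block are genuine $s_i$-moves on the left or right, so these pieces cost at most $K(m)$ times the corresponding part of $\cB_n(f)$.

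The assignment words are handled by a one-dimensional $L^1$ inequality for words with fixed letter counts. Adjacent letter exchanges inside a block are again right $s_i$-moves. This costs $O(m)$, which is harmless compared with $\sqrt n$ when $m\le\sqrt n$. For larger $m$ one iterates, so I will in fact choose $m$ close to $n$ and count the losses multiplicatively.

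\emph{The table term.} The map $\sigma\mapsto N(\sigma)$ pushes $U_n$ forward to a multivariate hypergeometric law on $k\times k$ contingency tables with margins $m$. Here I need an $L^1$ Poincaré inequality for $g=\E[f\mid N]$ under the moves $N\mapsto N+e_{ab}-e_{a'b}-e_{ab'}+e_{a'b'}$ with adjacent indices. The step I would try first: compare with independent hypergeometric rows through a multivariate hypergeometric inequality, which I expect to give a constant $\lesssim\sqrt{m}\,\mathrm{poly}(k)$.

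Each table move is then realized at the permutation level by an explicit routing. A label at the boundary of one position block is exchanged with a label at the boundary of the adjacent block, or neighboring labels across a value-block boundary are exchanged. Each such exchange is written as a path of adjacent swaps of length $O(m)$, averaged over fiber elements so that each $s_i$ edge is used with congestion $O(1)$ in distribution.

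\emph{Choice of parameters and the main obstacle.} This yields $\Lambda(k)=k^{C}$. Choosing $k=\exp(\sqrt{\log n/\log\log n})$ and iterating through $\log n/\log k$ scales multiplies the constants to $\sqrt n\,\exp(C\sqrt{\log n\log\log(n+2)})$. It also gives $n^{1/2+\eta}$ for fixed $\eta$.

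The main obstacle is keeping the recursion factor $(1+\theta)$ with no loss of $\sqrt k$ in the fiber term. I expect this to be the hard part of the routing comparison: it must not overcount boundary swaps shared between the table term and the fiber term.
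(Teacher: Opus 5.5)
\textbf{Lower bound.} Your lower bound is correct but takes a different route from the paper. $S$ is the Wilcoxon rank-sum statistic, and your estimate $\cB_n(f)=O(n^{-1/2})$ holds once you import a local limit theorem giving point masses $O(n^{-3/2})$. The paper (\cref{prop:static-lower}) instead tests the event that none of the labels $1,\dots,m$ sits in positions $1,\dots,m$, with $m\approx\sqrt{cn}$. Only the label and position transpositions $s_m$ can change that event, so its boundary is computed exactly and no local limit theorem is needed.

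\textbf{Upper bound: the fiber step fails.} First, as listed, your three pieces overcount the fiber. Pieces (1) and (2) already determine $\sigma$ given $N$, since $\prod_a\bigl(m!/\prod_bN_{ab}!\bigr)\,(m!)^k=(m!)^{2k}/\prod_{a,b}N_{ab}!$ is exactly the fiber size.

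More importantly, the induction hypothesis is two-sided. Applying it to a smaller permutation requires \emph{both} its left and right adjacent-rank transpositions, and these are not both physical $s_i$-moves. For your value-block permutation, two position-adjacent ranks are consecutive occurrences of one color in the position words. They are typically about $k$ apart, and much farther on rare assignments. In the paper's coordinates (\cref{lem:fiber-product}), neither side of a cell permutation is physical in general. So ``these pieces cost at most $K(m)$'' is false.

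The repair routes each virtual rank swap across the gap along $a,a+1,\dots,b-1,\dots,a$. This has congestion of the order of the maximal gap, $d_R\asymp R\log n$ (\cref{lem:cell-routing}), and the long-gap assignments must be removed separately (\cref{lem:exceptional}). The cost is a factor of order $k\log n$, not $1+\theta$. With pieces of size $n/k$, the induction gains only $\sqrt k$. Writing $K(m)=\sqrt m\,\phi(m)$, you get $\phi(n)\lesssim\sqrt k\log n\,\phi(n/k)$, which returns only exponent one.

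The missing idea is to use the double-coset coordinates: label-side words, position-side words, and cell permutations of size $\approx n/R^2$. Then the gain $\sqrt n/R$ pays for the routing loss $R\log n$, leaving $C\log n$ per level. This gives the paper's recursion $C_n^\star\le C[R^2\sqrt n+R\log n\,A(4n/R^2)]$. The factor $C\log n$ over $\asymp\sqrt{\log n/\log\log n}$ levels is what produces $\exp(C\sqrt{\log n\log\log n})$. Read literally, your recursion $K(n)\le(1+\theta)K(n/k)+C\sqrt n\,k^C$ would give $K(n)=O(\sqrt n)$ with $k$ a large constant, which is a further sign that the fiber step is too optimistic.

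\textbf{The other ingredients are asserted, not supplied.}
\begin{itemize}
\item \emph{Assignment words.} An $O(m)$ bound is fatal at $m=n/k$, and ``one iterates'' is exactly where the work lies. You need the multislice inequality with constant $N^2/k_{\min}^{3/2}$ (\cref{prop:multislice-path}). For words of length $\approx n/R$ with color counts $\approx n/R^2$ it gives $R\sqrt n$. Its proof is a halving induction, and that is where the multivariate hypergeometric inequality (\cref{lem:hypergeom}) enters: it controls the composition of the two halves, not the table.
\item \emph{Table term.} In your plan this is only ``expected.'' The paper obtains $R^2\sqrt n$ from complete-transposition square-root smoothing (\cref{lem:sqrt-boundary}). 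It uses that on balanced tables every crossing table move has $\gtrsim\lambda_*^2$ realizations, together with stable block averaging (\cref{lem:coarse-table,lem:block-average}).
\item \emph{Exceptional events.} Unbalanced tables and bad compositions leave additive errors. The paper absorbs them for non-rare sets and treats rare sets with Salez's log-Sobolev inequality. Your plan has no mechanism for either.
\end{itemize}
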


Put $H_n(t)=H(\mu_t\mid U_n)$ and $\tau_n=\min\{t:H_n(t)\le1\}$.
\begin{theorem}[Intrinsic relative-entropy cutoff]\label{thm:entropy-main}
For every fixed $p\in(0,1)$,
\[
 \frac1{2\kappa}\le\liminf_n\frac{\tau_n}{n^2\log n}
 \le\limsup_n\frac{\tau_n}{n^2\log n}\le\frac1\kappa.
\]
Moreover,
\[
 \lim_{s\to\infty}\liminf_{n\to\infty}
 H_n(\tau_n-\lfloor sn^2\rfloor)=\infty,
 \qquad
 \lim_{s\to\infty}\limsup_{n\to\infty}
 H_n(\tau_n+\lceil sn^2\rceil)=0.
\]
For each fixed $s$, the earlier time is nonnegative for all sufficiently large $n$. These limits give an $O_p(n^2)$ entropy window about $\tau_n$. No sharp first-order location for $\tau_n$ is asserted.
\end{theorem}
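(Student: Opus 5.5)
The plan is to derive both entropy bounds from the total-variation analysis of \cref{thm:TV-main}, and to get the $O_p(n^2)$ window from a Wasserstein-to-entropy regularization step combined with the contraction rate $\kappa/n^2$ of the killed two-card chain.

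\textbf{Lower bound.} Pinsker's inequality gives $H_n(t)\ge 2d_n(t)^2$. This alone only shows $\tau_n\ge(1-\delta)t_n^\star$, which is exactly the claimed constant $1/(2\kappa)$. For the divergence $H_n(\tau_n-sn^2)\to\infty$ we need more. Let $\Phi$ be the cosine eigenfunction from the lower bound, with eigenvalue $1-\kappa/n^2+o(n^{-2})$. Its normalized mean under $\mu_t$ is about $\sqrt n\,e^{-\kappa t/n^2}$, and its variance under $\mu_t$ is uniformly $O(1)$. Relative entropy dominates the entropy of the pushforward of $\mu_t$ under $\Phi$. A Gaussian-type comparison (Donsker--Varadhan applied to $e^{\lambda\Phi}$, controlled by the conditional-variance estimate) then gives $H_n(t)\ge c\,m_t^2\wedge m_t$ with $m_t=\sqrt n e^{-\kappa t/n^2}$. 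The one-point step relating $\tau_n$ to the time when $m_t$ is of order one is handled by the upper-bound argument below. Conversely, this mean bound also gives $H_n(\tau_n - sn^2)\gtrsim e^{\kappa s}$-type growth relative to the value at $\tau_n$. I would phrase this as follows: the smoothing step below shows $H_n(t+sn^2)\le C e^{-c s}(1+H_n(t))$-style decay. Applied from $\tau_n-sn^2$, the decay forces $H_n(\tau_n-sn^2)\ge c e^{cs}$, because $H_n(\tau_n)\approx1$ and $H_n(\tau_n-1)>1$.

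\textbf{Upper bound and window.} I would invoke the Caputo--M\"unch--Salez theorem \cite{CaputoMunchSalez2025} in its regularization form. If the kernel satisfies a curvature-type bound $W_1(\nu P,\nu'P)\le(1-\kappa_0)W_1(\nu,\nu')$ in a suitable metric, and a reverse-transport or local entropy bound holds for one step, then entropy after additional time is controlled by $W_1$ distance earlier, up to a logarithmic factor. The metric is the word metric for adjacent transpositions on both sides, and the coalescing coupling supplies contraction at rate $\kappa/n^2$. The transport distance from $\mu_t$ to $U_n$ is bounded by coupling each card's position. This makes $W_1(\mu_t,U_n)\lesssim n^{2}\cdot n e^{-\kappa t/n^2}$-type, and it is small once $t\ge (1+\delta)n^2\log(n^{3/2})/\kappa$. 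The extra multiplicative slack is what costs up to the factor $2$, giving $\limsup\tau_n/(n^2\log n)\le1/\kappa$. For the decay beyond $\tau_n$, I would use an entropy contraction of the form
\[
 H_n(t+n^2)\le (1-c_p)\,H_n(t)+o(1).
\]
This follows from a modified log-Sobolev-type estimate at scale $n^2$, obtained through Salez's multislice theorem \cite{Salez2021} via the positions/labels decomposition and compared to the overhand kernel through \cref{thm:static}. Iterating it gives $H_n(\tau_n+sn^2)\le (1-c_p)^s+o(1)$. Nonnegativity of $\tau_n-\lfloor sn^2\rfloor$ is immediate from $\tau_n\ge c\,n^2\log n$.

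\textbf{Main obstacle.} The hardest step is the entropy contraction at scale $n^2$ with a constant that does not depend on $n$. \cref{thm:static} loses a factor $\sqrt n\,e^{O(\sqrt{\log n\log\log n})}$, which is affordable in $L^1$ but not directly in entropy. I would first try a two-stage argument. In the first stage, the regularization result converts $H_n(\tau_n)\le1$ into a bound $\|d\mu_{\tau_n+n^2}/dU_n\|_{L^2}\le C$. In the second stage, contraction follows from the interchange spectral gap \cite{CaputoLiggettRichthammer2010}, which is needed only near equilibrium, combined with the comparison.
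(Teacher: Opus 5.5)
There is a genuine gap, and it sits at the step you call the main obstacle. You never establish an $n$-independent entropy contraction over $O(n^2)$ steps, and both the location upper bound and the window depend on it. The missing observation is that \cref{thm:CMS} needs no log-Sobolev, regularization, or comparison input. Under an almost-surely nonexpansive reverse coupling, a $W_1$ contraction factor $a$ transfers to entropy with the \emph{same} factor $a$.

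The metric to use is the arbitrary-transposition metric $d_{\rm tr}$. For two decks differing by one transposition, the coupling of \cref{lem:single-coupling} keeps their $d_{\rm tr}$-distance in $\{0,1\}$ with mean $(T_n^t\1)(z)$. Gluing along geodesics then gives $W_{1,d_{\rm tr}}$ contraction by $a_n(t)=\max_z(T_n^t\1)(z)$. Since $P_n^t$ is symmetric and $d_{\rm tr}$ is bi-invariant, right-multiplying both decks by one common random $t$-step product couples the reverse kernels and preserves the distance exactly. Hence $H(\nu P_n^t\mid U_n)\le a_n(t)H(\nu\mid U_n)$, and \eqref{eq:a-smooth} gives $a_n(\lceil n^2\rceil+s)\le C_p\rho_n^s$.

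Your two-sided adjacent word metric has neither property in any evident way. Under identical cuts, a right neighbor $\sigma s_i=(\sigma(i)\,\sigma(i+1))\sigma$ stays a transposition of two fixed labels. Those labels need not be adjacent, and their positions wander, so the distance is neither $\{0,1\}$-valued nor controlled by the survival probability. Also, a common packet reversal $R$ turns $\sigma s_i$ into $\sigma R\,(R^{-1}s_iR)$. This is a long position transposition whenever $i,i+1$ lie in different packets, so the natural coupling does not verify the curvature hypothesis.

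With this one inequality the rest is short, and your substitutes do not reach the stated constants. Starting from $H_n(0)=\log n!\le n\log n$ gives $H_n(\lceil n^2\rceil+s)\le C_p\rho_n^s\,n\log n$. By \cref{prop:killed-gap} this tends to zero at $s=\lceil(1+\delta)n^2\log n/\kappa\rceil$, and that is where the coefficient $1/\kappa$ comes from. Your regularization estimate does not produce it. A bound of order $n^3e^{-\kappa t/n^2}$ becomes small only beyond $3n^2\log n/\kappa$. Your displayed threshold already has coefficient $3/(2\kappa)$, and the ``factor $2$ slack'' is asserted rather than derived.

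For the window, fix $b_n=\lceil B_pn^2\rceil$ with $a_n(b_n)\le 1/2$. Exact halving gives $H_n(\tau_n+jb_n)\le 2^{-j}$. Applying it forward from $\tau_n-1-jb_n$ to $\tau_n-1$ gives $H_n(\tau_n-1-jb_n)>2^j$; this is the backward step you describe, and monotonicity fills the gaps.

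By contrast, a modified log-Sobolev estimate at scale $n^2$ built from Salez's theorem and \cref{thm:static} is not a viable route. The static inequality is an $L^1$ bound with an $n^{1/2+o(1)}$ loss and cannot yield an $n$-independent entropy decay, as you yourself note. The two-stage $L^2$ plan is unsupported, and the entropy argument in fact uses neither the static theorem nor the TV upper bound.

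Your lower location bound via Pinsker and \cref{prop:lower} is correct and matches the paper. The Donsker--Varadhan detour with $\Phi$ is unnecessary, and its ``one-point step'' linking $\tau_n$ to the time where $m_t$ is of order one is not justified.
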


\subsection{Proof architecture and scope of the inputs}
The lower bound starts with an exact first cosine mode of one card. Its lift to the deck has mean initially of order $n$ and standard deviation of order $\sqrt n$, uniformly in time. Its decay rate $\kappa/n^2$ identifies $t_n^\star$.

For the upper bound, couple two decks differing by one transposition. Their discrepancy evolves as two tagged positions until it is killed when adjacent. The resulting symmetric substochastic kernel $T_n$ has spectral radius $\rho_n$ with $n^2(1-\rho_n)\to\kappa$. Smoothing and an exact loss-of-mass identity yield
\[
 \sum_i\norm{\mu_{2\lceil n^2\rceil+s}-s_i\mu_{2\lceil n^2\rceil+s}}_{\TV}
 \le C_p\rho_n^s.
\]
Inversion symmetry makes the two-sided boundary of $d\mu_t/dU_n$ exactly four times this discrepancy sum. The static theorem then supplies the prefactor $\sqrt n\,n^{o(1)}$ needed for the upper cutoff.

The static proof is independent of the shuffle. A block table records coarse counts. Conditional on this table, a uniform permutation consists of two independent families of colored assignment words and independent permutations inside the cells. Assignment fluctuations control exceptional long gaps, permitting a low-congestion comparison from consecutive cell ranks to physical adjacent transpositions. If $C_m^\star$ denotes the optimal static constant and $A(x)=\max_{m\le\lfloor x\rfloor}C_m^\star$, the resulting recursion is
\[
 C_n^\star\le C\left(R^2\sqrt n+R\log n\,A(4n/R^2)\right).
\]
It closes at exponent $1/2+o(1)$. The auxiliary colored-word induction and every rare-set absorption are given quantitatively below.

The entropy proof is a separate branch: the same discrepancy coupling contracts the arbitrary-transposition metric, and the reverse kernel admits an almost-surely nonexpansive coupling. The Caputo--M\"unch--Salez theorem then contracts entropy on blocks of $O_p(n^2)$ steps. This branch uses neither the static theorem nor the TV upper bound.

The non-elementary inputs are explicitly stated: the complete-graph specialization of the interchange spectral-gap theorem, Salez's normalized multislice log-Sobolev inequality, and the general entropy-contraction theorem. We do not reprove those published theorems. Coarea, tensorization, concentration, the grid Nash estimate, and the analytic compactness step used here are supplied in the text. No numerical certificate, external code, or preceding conversational derivation is needed for any proof.

\section{Elementary tools and normalized external inputs}\label{sec:tools}
\subsection{Coarea, products, and paths}
For a finite law $\pi$, write $\Var_\pi f=\E_\pi(f-\E_\pi f)^2$ and
\[
 \Ent_\pi(F)=\E_\pi[F\log F]-(\E_\pi F)\log(\E_\pi F),\qquad F\ge0.
\]
This functional entropy is distinct from $H(\nu\mid\pi)$; they agree when $F=d\nu/d\pi$.
A boundary functional below is a sum of nonnegative weights times $|f(y)-f(x)|$.
\begin{lemma}[Coarea reduction]\label{lem:coarea}
If $\norm{\1_E-\pi(E)}_1\le C B(\1_E)$ for every set $E$, then $\norm{f-\E_\pi f}_1\le C B(f)$ for every real $f$. The best constant equals the supremum of $2\pi(E)(1-\pi(E))/B(\1_E)$ over nontrivial sets with nonzero boundary.
\end{lemma}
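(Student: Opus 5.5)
The plan is the standard coarea (layer-cake) argument. Fix a real $f$ and a median $m$ of $f$ under $\pi$. Since $\E_\pi f$ minimizes neither $L^1$ deviation but is close to the minimizer, we have
\[
 \norm{f-\E_\pi f}_1\le 2\norm{f-m}_1 .
\]
A factor of $2$ is lost here, so we will avoid this route for the sharp constant. Instead we work directly with the identity
\[
 \norm{f-\E_\pi f}_1=\int_{-\infty}^{\infty}\norm{\1_{\{f>s\}}-\pi(f>s)}_1\,ds .
\]
This identity is not pointwise true in general, so we proceed differently. Write $f-\E_\pi f=\int(\1_{\{f>s\}}-\pi(f>s))\,ds$, where the integral is over $s\in\mathbb R$ and we use the convention $\1_{\{f>s\}}-\1_{\{0>s\}}$ to ensure convergence. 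This holds pointwise because $f(x)=\int(\1_{\{f(x)>s\}}-\1_{\{0>s\}})\,ds$, and taking expectations and subtracting cancels the reference term. Minkowski's inequality for integrals then gives
\[
 \norm{f-\E_\pi f}_1\le\int\norm{\1_{E_s}-\pi(E_s)}_1\,ds,\qquad E_s=\{f>s\}.
\]

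Next we apply the hypothesis to each level set $E_s$, which gives
\[
 \norm{f-\E_\pi f}_1\le C\int B(\1_{E_s})\,ds .
\]
The key identity is the coarea formula for the boundary functional. Since $B(g)=\sum w_{xy}|g(y)-g(x)|$ with $w\ge0$, and since for each pair $|\1_{f(y)>s}-\1_{f(x)>s}|$ equals the indicator that $s$ lies between $\min(f(x),f(y))$ and $\max(f(x),f(y))$, Fubini yields
\[
 \int B(\1_{E_s})\,ds=\sum w_{xy}|f(y)-f(x)|=B(f).
\]
This proves $\norm{f-\E_\pi f}_1\le C\,B(f)$. Only finitely many distinct level sets occur, since the space is finite, so all integrals are finite sums and no measurability issues arise.

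For the best constant, first note that $\norm{\1_E-\pi(E)}_1=\pi(E)(1-\pi(E))+(1-\pi(E))\pi(E)=2\pi(E)(1-\pi(E))$. Testing on indicators therefore shows that the optimal constant $C^\star$ is at least $\sup_E 2\pi(E)(1-\pi(E))/B(\1_E)$. Here trivial $E$ contribute nothing, and a nontrivial $E$ with zero boundary would force $C^\star=\infty$; this is consistent with the supremum convention, and the statement restricts to sets with nonzero boundary. Conversely, the argument above applied with $C$ equal to this supremum gives the reverse inequality. The one point to handle is level sets $E_s$ with $B(\1_{E_s})=0$ but $E_s$ nontrivial. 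In that case the hypothesis with finite $C$ forces $\norm{\1_{E_s}-\pi(E_s)}_1=0$, which is impossible for a full-support $\pi$. We will therefore remark that if such sets exist, both sides are infinite, and otherwise the supremum is attained among the finitely many sets.

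The main (mild) obstacle is the bookkeeping in the first step, namely justifying the Minkowski/layer-cake representation with the centering constant. I expect it to be routine once the finite-sum form is written out.
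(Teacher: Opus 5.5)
Your argument is correct and is essentially the paper's proof. Both use the layer-cake representation of $f-\E_\pi f$, the triangle (Minkowski) inequality over levels, the hypothesis on each level set, and the pairwise identity $|f(x)-f(y)|=\int|\1_{\{f(x)>s\}}-\1_{\{f(y)>s\}}|\,ds$ summed against the boundary weights, with indicators giving the reverse comparison of constants. You can delete the opening detours (the median bound and the non-identity), and the reference term is unnecessary: the paper integrates over $[\min f,\max f]$, outside of which the integrand already vanishes.
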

\begin{proof}
For $a\le\min f$, $b\ge\max f$,
$f-\E f=\int_a^b(\1_{\{f>u\}}-\pi(f>u))\,du$.
The triangle inequality bounds its $L^1$ norm by the integral of the centered-indicator norms. For every pair $x,y$,
$|f(x)-f(y)|=\int_a^b|\1_{\{f(x)>u\}}-\1_{\{f(y)>u\}}|\,du$.
Finite summation and integration commute, proving the upper bound. Indicators give the reverse comparison of optimal constants.
\end{proof}
Throughout indicator arguments, the empty and full sets have zero centered oscillation and are treated separately. When needed we complement a nontrivial set to arrange $0<\alpha\le1/2$.

\begin{lemma}[Product inequalities]\label{lem:products}
On a product of finitely many independent probability spaces, centered $L^1$ inequalities with constants $C_j$ tensorize with constant $\max_j C_j$, when their directional boundaries are added. Also
\begin{equation}\label{eq:var-tensor}
 \Var f\le\sum_j\E\Var_j f,
\end{equation}
where $\Var_j$ varies only the $j$th variable, holding the others fixed. For a function $h$ of independent Bernoulli($p$) bits,
\begin{equation}\label{eq:bernoulli-var}
 \Var h\le\sum_j p(1-p)\E|h(c^{j,1})-h(c^{j,0})|^2.
\end{equation}
\end{lemma}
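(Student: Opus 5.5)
The lemma has three parts. I will prove them in order: the $L^1$ tensorization, the Efron--Stein bound \eqref{eq:var-tensor}, and the Bernoulli specialization \eqref{eq:bernoulli-var}.

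\emph{$L^1$ tensorization.} Write the space as $\Omega=\Omega_1\times\dots\times\Omega_k$ with law $\pi=\pi_1\otimes\dots\otimes\pi_k$. Let $B_j$ be the boundary functional in the $j$th direction, applied to $f$ with the other coordinates frozen and then averaged over them. The hypothesis is $\norm{g-\E_{\pi_j}g}_{L^1(\pi_j)}\le C_jB_j(g)$ for functions $g$ of the $j$th coordinate alone.

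Let $\E_{\le j}$ denote integration over the first $j$ coordinates, and telescope:
\[
 f-\E_\pi f=\sum_{j=1}^k\bigl(\E_{<j}f-\E_{\le j}f\bigr),\qquad \E_{<1}f=f .
\]
The $j$th term is $g_j-\E_{\pi_j}g_j$, where $g_j=\E_{<j}f$ is viewed as a function of $x_j$ with $x_{>j}$ fixed. Apply the one-coordinate inequality for each fixed $x_{>j}$ and integrate. This bounds the $L^1$ norm of the $j$th term by $C_j$ times the $j$th directional boundary of $g_j$.

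Since $|\cdot|$ is convex, Jensen gives $|g_j(y)-g_j(x)|\le\E_{<j}|f(\cdot,y,\cdot)-f(\cdot,x,\cdot)|$. The boundary is a nonnegative combination of such differences, so the $j$th boundary of $g_j$ is at most the $j$th boundary of $f$. Summing over $j$ with the triangle inequality yields $\norm{f-\E f}_1\le\max_jC_j\sum_jB_j(f)$.

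If one prefers to avoid the telescoping, \cref{lem:coarea} offers an alternative: reduce to indicators and run the same argument. The telescoping route already works for general $f$, so I will use it.

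\emph{Efron--Stein.} The same martingale decomposition works, now with $d_j=\E_{\le j-1}f-\E_{\le j}f$ under a fixed ordering. The increments are orthogonal in $L^2(\pi)$, so $\Var f=\sum_j\E d_j^2$.

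Observe that $d_j=\E_{<j}\bigl(f-\E_jf\bigr)$, where $\E_j$ integrates out only $x_j$. This uses independence: $\E_{<j}$ and $\E_j$ commute. Conditional Jensen then gives $\E d_j^2\le\E(f-\E_jf)^2=\E\Var_jf$, which is \eqref{eq:var-tensor}.

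\emph{Bernoulli bound.} Specialize to Bernoulli($p$) bits. With the other bits fixed, $h$ takes the value $h(c^{j,1})$ with probability $p$ and $h(c^{j,0})$ with probability $1-p$. A two-point variable taking values $a,b$ with these probabilities has variance $p(1-p)(a-b)^2$. So $\Var_jh=p(1-p)|h(c^{j,1})-h(c^{j,0})|^2$ exactly, and inserting this into \eqref{eq:var-tensor} gives \eqref{eq:bernoulli-var}.

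\emph{Main obstacle.} The only delicate point is the Jensen step in the $L^1$ part: the directional boundary of the partially averaged function $g_j$ must be dominated by that of $f$. This holds because each boundary functional is a nonnegative linear combination of absolute differences in a single coordinate. Averaging over the other coordinates commutes with forming such differences, and the absolute value is convex.
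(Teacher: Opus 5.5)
Your proof is correct and follows the paper's argument: your telescoping sum of $\E_{<j}f-\E_{\le j}f$ is the unrolled form of the paper's two-factor insertion of $\E_Xf$ followed by induction, and it uses the same Jensen step to dominate the boundary of the partial average by that of $f$. For the variance you use orthogonality of the martingale increments where the paper uses $\Var f=\E_Y\Var_Xf+\Var_Y\E_Xf$ with induction, but both come down to the same conditional Jensen inequality, and your Bernoulli specialization matches the paper's.
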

\begin{proof}
For two independent variables $X,Y$, insert $\E_X f$ between $f$ and $\E f$ in the $L^1$ norm. Apply the $X$ inequality to the first difference and the $Y$ inequality to the second; Jensen bounds each $Y$ difference of $\E_X f$ by the expectation of the corresponding difference of $f$. Induction gives the assertion for any number of factors without multiplying constants.
For variance, $\Var f=\E_Y\Var_X f+\Var_Y\E_X f$, and the last term is at most $\E_X\Var_Y f$ by Jensen applied to the centered function in $Y$. Induct again. A single Bernoulli conditional variance is $p(1-p)$ times the squared two-value difference, giving \eqref{eq:bernoulli-var}.
\end{proof}

A \emph{multislice} $\Omega_{\boldsymbol k}$ consists of words $\omega\in\{1,\ldots,r\}^N$ containing $k_j$ copies of color $j$, where $k_j\ge1$ and $\sum_jk_j=N$. Its law is uniform. Zero classes, if present in a conditional composition, are discarded; a one-color space has one point. Let $\omega^{ab}$ exchange positions $a,b$. Define
\[
 B_{\rm RT}(f)=\sum_{a<b}\E|f(\omega^{ab})-f(\omega)|,
 \qquad B_{\rm path}(f)=\sum_{a=1}^{N-1}\E|f(\omega^{a,a+1})-f(\omega)|.
\]
For permutations these also define position-side boundaries. Left complete-transposition boundaries have the same sum, since conjugation permutes the set of all transpositions at each fixed state.

\begin{theorem}[Established complete-transposition inequalities]\label{thm:external-functional}
On a nontrivial multislice of length $N$, with smallest positive class $k$, put
\[
 \mathcal E_{\rm RT}(f)=\frac12\sum_{a<b}\E(f(\omega^{ab})-f(\omega))^2.
\]
Then
\begin{equation}\label{eq:external-poincare}
 \Var f\le \frac1N\mathcal E_{\rm RT}(f),
\end{equation}
and for a universal $C_{\rm LS}$,
\begin{equation}\label{eq:external-LSI}
 \Ent(f^2)\le\frac{C_{\rm LS}\log(N/k)}{N}
 \sum_{a<b}\E(f(\omega^{ab})-f(\omega))^2.
\end{equation}
For $r=N$, $k=1$, this is the uniform-permutation case. For a binary slice with both class sizes at least $N/3$, the logarithm in \eqref{eq:external-LSI} is bounded by $\log3$.
\end{theorem}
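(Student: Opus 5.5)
This statement is labeled as an established input; the paper says it will not reprove the published theorems. So the plan is to derive each displayed inequality from the cited results by matching normalizations, not to build a new argument.

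\textbf{Poincar\'e inequality \eqref{eq:external-poincare}.} I would invoke the Caputo--Liggett--Richthammer theorem \cite{CaputoLiggettRichthammer2010} (Aldous' spectral-gap conjecture). It says that the interchange process on a weighted graph has the same spectral gap as the single random walk on that graph. On the complete graph $K_N$ with unit rates, one particle's generator has gap $N$: its eigenvalues are $0$ and $N$, the latter with multiplicity $N-1$.

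A multislice chain is a projection of the interchange process. One maps a permutation to a word by coloring labels according to the classes. Functions on $\Omega_{\boldsymbol k}$ then lift to functions on $\mathfrak S_N$ with the same variance and the same Dirichlet form $\mathcal E_{\rm RT}$. Hence the gap of the multislice is at least $N$, which is \eqref{eq:external-poincare} with the factor $\frac12$ convention.

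I would check the constant directly on the one-particle coordinate function $f(\omega)=\1_{\omega_1=j}$. This confirms that the normalization of $\mathcal E_{\rm RT}$, with $\frac12\sum_{a<b}$, matches a generator whose rate is $1$ per unordered pair.

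\textbf{Log-Sobolev inequality \eqref{eq:external-LSI}.} I would quote Salez \cite{Salez2021}. For the multislice with rate-$1/N$ transposition dynamics (the mean-field normalization), he shows the log-Sobolev constant is of order $\log(N/k)$, where $k$ is the smallest class. Rescaling time by $N$ converts his Dirichlet form to $\frac1N\sum_{a<b}\E(\cdot)^2$. One must also track the factor $2$ between $\sum_{a<b}$ and $\frac12\sum_{a\ne b}$, and the convention $\Ent(f^2)\le c\,\mathcal E(f)$ versus $\Ent(f^2)\le 2c\,\mathcal E(f)$. Both factors are absorbed into the universal $C_{\rm LS}$.

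Next come the degenerate cases. Zero classes are discarded, so $k\ge1$. The statement assumes the slice is nontrivial, so $N/k\ge r\ge2$ and $\log(N/k)\ge\log2>0$; this keeps the right side from vanishing spuriously.

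The case $r=N$, $k_j=1$ is the symmetric group with $\log N$. For a binary slice with $k\ge N/3$, we have $N/k\le3$, which gives the stated $\log3$ bound.

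\textbf{Main obstacle.} The main risk is purely bookkeeping: matching Salez's normalization (rate per transposition, entropy of $f^2$ versus $f$, and whether classes may be empty) to the display. I would resolve this by testing both inequalities on an indicator of one coordinate. That test pins down the scaling in $N$ and $k$ up to universal factors, and the statement allows such factors to be absorbed into the constants.
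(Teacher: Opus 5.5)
Your proposal is correct and follows essentially the same route as the paper. The paper likewise gets the gap $N$ on $\mathfrak S_N$ from the complete-graph case of Caputo--Liggett--Richthammer, transfers it to the multislice by the color-identifying lift (which preserves means, variances and $\mathcal E_{\rm RT}$), and reads the log-Sobolev bound off Salez's $\log(N/k)$ constant for the rate-$1/N$ form $(2N)^{-1}\sum_{a<b}$, with the factors of two absorbed into $C_{\rm LS}$. Your one-coordinate indicator test is only a sanity check on normalizations (it yields equality in the Poincar\'e inequality and a lower bound on the log-Sobolev constant); it is harmless but not needed for the proof.
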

\begin{proof}[Source and normalization]
For the generator $\mathcal Lf=\sum_{a<b}(f(\omega^{ab})-f(\omega))$, each unordered transposition has rate one and its Dirichlet form is $\mathcal E_{\rm RT}$. On $\mathfrak S_N$, its spectral gap is $N$ by the complete-graph case of Caputo--Liggett--Richthammer \cite{CaputoLiggettRichthammer2010}. Functions on a multislice lift to functions on $\mathfrak S_N$ by identifying labels of the same color, preserving means, variances, and the displayed form, so \eqref{eq:external-poincare} follows. Salez's main multislice theorem \cite{Salez2021} gives a log-Sobolev constant at most $(4/\log2)\log(N/k)$ for form $(2N)^{-1}\sum_{a<b}\E(\Delta_{ab}f)^2$. Multiplying all rates by $N$ divides the functional-inequality constant by $N$. This proves \eqref{eq:external-LSI}, allowing a universal numerical factor in $C_{\rm LS}$. These published results are inputs, not new assertions proved here.
\end{proof}

\begin{lemma}[Routing and small sets]\label{lem:routing-basic}
On any multislice of length $N$,
\begin{equation}\label{eq:RT-route}
 B_{\rm RT}(f)\le2\sum_{i=1}^{N-1}i(N-i)\E|\Delta_{i,i+1}f|
 \le\frac{N^2}{2}B_{\rm path}(f).
\end{equation}
Consequently,
\begin{align}
 \norm{f-\E f}_1&\le C N B_{\rm path}(f),\label{eq:crude-path}\\
 \alpha\log(1/\alpha)&\le
 C N\log(N/k)B_{\rm path}(\1_E),\quad \alpha=\Pp(E),\label{eq:smallset-path}\\
 \alpha\log(1/\alpha)&\le
 \frac{C\log(N/k)}N B_{\rm RT}(\1_E).\label{eq:smallset-RT}
\end{align}
For a consecutive split into intervals of lengths $m,N-m$, averaging within the two intervals gives $\widetilde f$ with
\begin{equation}\label{eq:middle-averaging}
 \E|\widetilde f(\omega^{m,m+1})-\widetilde f(\omega)|
 \le2 B_{\rm path}(f).
\end{equation}
\end{lemma}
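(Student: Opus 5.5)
The plan is to prove the routing bound \eqref{eq:RT-route} by the standard canonical-path argument, and then to derive each remaining estimate from it combined with \cref{thm:external-functional} and \cref{lem:coarea}.

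For \eqref{eq:RT-route}, fix $a<b$. Write the transposition $(a\,b)$ as the product of $2(b-a)-1$ adjacent transpositions: first move the content of $a$ rightward to $b$, then move the displaced content back leftward. The triangle inequality bounds $|f(\omega^{ab})-f(\omega)|$ by the sum of $|f(\eta_{j+1})-f(\eta_j)|$ along this path. Each intermediate state $\eta_j$ is a fixed position-permutation of $\omega$, so it is again uniform on the multislice. Hence each step contributes exactly $\E|\Delta_{i,i+1}f|$ for the relevant edge $i$. Each path uses every edge $(i,i+1)$ with $a\le i<b$ at most twice. There are $i(N-i)$ pairs with $a\le i<b$, so summing over pairs gives the first inequality. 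The bound $i(N-i)\le N^2/4$ then gives the second.

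For the consequences, first specialize to indicators. Since $\Delta\1_E\in\{-1,0,1\}$, we have $\E(\Delta\1_E)^2=\E|\Delta\1_E|$, so $\mathcal E_{\rm RT}(\1_E)=\tfrac12B_{\rm RT}(\1_E)$.
\begin{itemize}
\item The Poincaré inequality \eqref{eq:external-poincare} then gives $\norm{\1_E-\alpha}_1=2\alpha(1-\alpha)\le B_{\rm RT}(\1_E)/N$. By \cref{lem:coarea}, $\norm{f-\E f}_1\le N^{-1}B_{\rm RT}(f)$ for all $f$. Combined with \eqref{eq:RT-route}, this yields \eqref{eq:crude-path} with $C=1/2$.
\item Next apply \eqref{eq:external-LSI} to $f=\1_E$. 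The left side is $\Ent(\1_E)=\alpha\log(1/\alpha)$, and the right side is $C_{\rm LS}\log(N/k)N^{-1}B_{\rm RT}(\1_E)$. This is \eqref{eq:smallset-RT}.
\item Inserting \eqref{eq:RT-route} into \eqref{eq:smallset-RT} converts the factor $N^{-1}$ into $N/2$, giving \eqref{eq:smallset-path}.
\end{itemize}

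For \eqref{eq:middle-averaging}, I would write $\widetilde f(\omega)=\E_\sigma f(\omega\circ\sigma)$, where $\sigma$ is uniform in $\mathfrak S_m\times\mathfrak S_{N-m}$ and acts on positions. Let $\tau=(m\,m+1)$. Then $\omega^{m,m+1}\circ\sigma=\omega\circ\sigma\circ(\sigma^{-1}\tau\sigma)$, and $\sigma^{-1}\tau\sigma=(a\,b)$ with $a=\sigma^{-1}(m)$ and $b=\sigma^{-1}(m+1)$. Here $a$ is uniform on $\{1,\dots,m\}$ and $b$ is independently uniform on $\{m+1,\dots,N\}$. Put $\eta=\omega\circ\sigma$, which is uniform. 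By Jensen, the left side of \eqref{eq:middle-averaging} is at most
\[
\frac1{m(N-m)}\sum_{a\le m<b}\E|f(\eta^{ab})-f(\eta)|.
\]
Routing each $(a\,b)$ as above, edge $i$ is used at most twice by each of the at most $m(N-m)$ crossing pairs with $a\le i<b$. The normalization $1/(m(N-m))$ therefore cancels, leaving $2B_{\rm path}(f)$.

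The only point needing care, which I expect to be the main (mild) obstacle, is the measure-preservation step. One must check that the intermediate states along the routing paths, and the random relabeling by $\sigma$, preserve the uniform law on the multislice even when colors repeat. This holds because each such operation is a deterministic bijection of position indices.
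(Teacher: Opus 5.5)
Your proposal is correct and follows the paper's proof essentially step by step. It uses the same path $a,a+1,\ldots,b-1,b-2,\ldots,a$, which charges each edge of $[a,b)$ at most twice. It uses the indicator identity $\mathcal E_{\rm RT}(\1_E)=B_{\rm RT}(\1_E)/2$ together with the Poincar\'e inequality and coarea. It applies the log-Sobolev inequality to indicators and then routes. For \eqref{eq:middle-averaging}, it uses Jensen with conjugation by the within-interval permutations and counts crossing pairs the same way. Your bookkeeping also gives the explicit constant $C=1/2$ in \eqref{eq:crude-path}.
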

\begin{proof}
For $a<b$, perform swaps with indices $a,a+1,\ldots,b-1,b-2,\ldots,a$; their composition exchanges exactly $a,b$. Telescoping the difference of $f$ along this path and using invariance of the uniform law under every path prefix charges each edge in $[a,b)$ at most twice. There are $i(N-i)$ pairs straddling $i$, proving \eqref{eq:RT-route}.
For an indicator, $\mathcal E_{\rm RT}(\1_E)=B_{\rm RT}(\1_E)/2$, so \eqref{eq:external-poincare} gives $2\alpha(1-\alpha)\le B_{\rm RT}(\1_E)/N$. Coarea proves \eqref{eq:crude-path}. Equations \eqref{eq:smallset-path}--\eqref{eq:smallset-RT} are \eqref{eq:external-LSI} applied to an indicator, followed by routing when needed.
For \eqref{eq:middle-averaging}, Jensen and conjugation by the within-interval averaging permutations give the average of $\E|\Delta_{ab}f|$ over $a\le m<b$. Its routing coefficient at edge $i$ is at most
$2\#\{(a,b):a\le m<b,\ a\le i<b\}/[m(N-m)]\le2$.
\end{proof}

\subsection{Sampling without replacement}
\begin{lemma}[Hypergeometric tails]\label{lem:concentration}
If $X$ is the number of marked slots in a uniform sample of $s$ slots from $N$, of which $A$ are marked, and $\mu=sA/N$, then
\[
 \E e^{tX}\le(1-A/N+(A/N)e^t)^s\quad(t\in\mathbb R).
\]
In particular, for $0<\delta<1$,
\[
 \Pp(X\le(1-\delta)\mu)\le e^{-\delta^2\mu/2},
 \qquad
 \Pp(X\ge(1+\delta)\mu)\le e^{-\delta^2\mu/3}.
\]
For any fixed $b>1$, the upper tail at $b\mu$ is at most $e^{-c_b\mu}$. A uniform word of length $N$ with $k$ occurrences of a specified color avoids that color in a fixed interval of length $d$ with probability at most $e^{-kd/N}$.
\end{lemma}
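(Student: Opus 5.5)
The plan is to prove the moment generating function bound first, via Hoeffding's classical comparison between sampling without and with replacement, and then derive every tail statement from it by the usual Chernoff optimization.

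\textbf{Step 1: the mgf bound.} Write $X=\sum_{j=1}^s\1\{\text{$j$th draw is marked}\}$ for an ordered uniform sample drawn without replacement. Hoeffding's comparison says $\E\phi(X)\le\E\phi(Y)$ for every convex continuous $\phi$, where $Y\sim\mathrm{Bin}(s,A/N)$. To keep the argument self-contained I would prove only the exponential case, by induction on $s$. Write $X_s=X_{s-1}+\xi_s$. Conditional on the first $s-1$ draws, $\xi_s$ is Bernoulli with parameter $(A-X_{s-1})/(N-s+1)$. One then needs to control $\E[e^{tX_{s-1}}(1+\frac{A-X_{s-1}}{N-s+1}(e^t-1))]$. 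This mixes $e^{tX_{s-1}}$ against a factor that decreases in $X_{s-1}$ when $t>0$ (and increases when $t<0$). The two are therefore negatively correlated (Chebyshev's association inequality), so the expectation is at most
\[
 \E e^{tX_{s-1}}\Bigl(1+\tfrac{A-\E X_{s-1}}{N-s+1}(e^t-1)\Bigr).
\]
Since $\E X_{s-1}=(s-1)A/N$, the bracket equals exactly $1+\frac AN(e^t-1)$. Iterating gives $\E e^{tX}\le(1-A/N+(A/N)e^t)^s$ for all real $t$. This correlation step is the only nonroutine point, and the sign check is what makes it work for both signs of $t$.

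\textbf{Step 2: the tail bounds.} With $\lambda=A/N$, use $1+\lambda(e^t-1)\le\exp(\lambda(e^t-1))$ to get $\E e^{tX}\le\exp(\mu(e^t-1))$, the Poisson mgf with mean $\mu$. Markov's inequality at the optimal $t=\log(1\pm\delta)$ then gives the standard Chernoff expressions $(e^{-\delta}/(1-\delta)^{1-\delta})^\mu$ and $(e^{\delta}/(1+\delta)^{1+\delta})^\mu$. These are bounded by $e^{-\delta^2\mu/2}$ and $e^{-\delta^2\mu/3}$ using the elementary inequalities
\[
 (1-\delta)\log(1-\delta)\ge-\delta+\delta^2/2,\qquad (1+\delta)\log(1+\delta)\ge\delta+\delta^2/3
\]
on $(0,1)$. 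For a fixed $b>1$, the same optimization at $t=\log b$ gives the bound $\exp(-\mu(b\log b-b+1))$. Here $c_b=b\log b-b+1>0$.

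\textbf{Step 3: the avoidance statement.} A uniform word with $k$ letters of the given color is equivalent to placing those $k$ occurrences at a uniformly random $k$-subset of the $N$ positions. The event that all of them miss a fixed interval of length $d$ is exactly $X=0$, where $X$ counts marked slots ($A=d$ marked positions) among the $s=k$ sampled positions. Taking $t\to-\infty$ in Step 1 gives
\[
 \Pp(X=0)\le(1-d/N)^k\le e^{-kd/N}.
\]
A direct count would also do, since the ratio $\binom{N-d}{k}/\binom Nk=\prod_{i<k}\frac{N-d-i}{N-i}$ has each factor at most $1-d/N$. I expect no obstacle beyond the correlation inequality in Step 1.
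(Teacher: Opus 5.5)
Your proposal is correct. Its key step, the moment generating function bound, takes a different route from the paper.

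The paper writes $\E e^{tX}=e_s(w)/\binom Ns$, with $w_i=e^t$ on marked slots and $w_i=1$ elsewhere. It then shows that, among positive vectors of fixed sum, $e_s$ is maximized at the constant vector. Averaging two entries preserves $a+b$ and increases $ab$, and $e_s=e_s(\widehat a,\widehat b)+(a+b)e_{s-1}(\widehat a,\widehat b)+ab\,e_{s-2}(\widehat a,\widehat b)$. Evaluating at the constant vector gives $(1-A/N+(A/N)e^t)^s$ directly.

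You instead reveal the sample draw by draw and apply Chebyshev's association inequality to the oppositely monotone functions $e^{tx}$ and $1+\frac{A-x}{N-s+1}(e^t-1)$ of the single variable $X_{s-1}$. Your sign check for both signs of $t$ is right. The expectation of the second function is exactly $1-A/N+(A/N)e^t$, because $\E X_{s-1}=(s-1)A/N$. The induction is legitimate, since the first $s-1$ draws form a uniform sample of size $s-1$ and the multiplying factor is nonnegative.

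What each route buys: the paper's argument is static and algebraic, and needs no case split in $t$ because all weights are positive. Yours is probabilistic and makes the negative dependence of draws without replacement explicit. It adapts to any sequential scheme whose conditional success probability is a decreasing function of the current count, with the marginal success probabilities in place of $A/N$.

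The rest matches the paper in substance. Chernoff optimization with the two elementary logarithmic inequalities gives the stated tails and $c_b=b\log b-b+1$; your Poisson relaxation is slightly lossy but sufficient. For the avoidance bound, you take $t\to-\infty$ with the $k$ color positions as the sample and the interval as the marked set, obtaining $(1-d/N)^k$. The paper counts with the roles exchanged, $\binom{N-k}{d}/\binom Nd\le(1-k/N)^d$. Both give $e^{-kd/N}$.
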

\begin{proof}
Set $w_i=e^t$ on marked slots and $w_i=1$ otherwise. The moment generating function is $e_s(w)/\binom Ns$, where $e_s$ is the elementary symmetric polynomial. Among positive vectors with fixed sum, $e_s$ is maximized at the constant vector: replacing two entries $a,b$ by their average preserves their sum and increases their product, while
$e_s=e_s(\widehat a,\widehat b)+(a+b)e_{s-1}(\widehat a,\widehat b)+ab e_{s-2}(\widehat a,\widehat b)$.
Repeated averaging, or compactness and this inequality at a maximum, proves the claim, including $s=0,1$. Thus the generating function is bounded by the binomial one. Exponential Markov inequality, optimization in $t$, and the elementary estimates for $(1\pm\delta)\log(1\pm\delta)$ give the displayed tails. This is also the classical sampling-without-replacement comparison discussed in \cite{BardenetMaillard2015}.
The last probability is $\binom{N-k}{d}/\binom Nd$, interpreted as zero if $d>N-k$. Each factor in its product representation is at most $1-k/N$, proving the bound.
\end{proof}

\subsection{A square-root boundary estimate}\label{subsec:sqrt}
\begin{lemma}[Complete-transposition smoothing of indicators]\label{lem:sqrt-boundary}
Let $H$ be the product of symmetric groups on disjoint blocks whose sizes are between $m$ and $M$. Let $Q_H$ average over its action on a uniform finite orbit space, and let $R_{E,H}(x)$ be the number of within-block transpositions $t$ with $\1_E(tx)\ne\1_E(x)$. If all nontrivial block sizes are at least $m$, then
\begin{equation}\label{eq:sqrt-local}
 \norm{\1_E-Q_H\1_E}_1\le\frac{C}{\sqrt m}\E\sqrt{R_{E,H}}.
\end{equation}
In particular, for all transpositions on $\mathfrak S_n$,
\begin{equation}\label{eq:sqrt-boundary}
 2U_n(E)(1-U_n(E))\le\frac{C}{\sqrt n}\E_{U_n}\sqrt{R_E}.
\end{equation}
\end{lemma}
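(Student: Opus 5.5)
The approach is a Ledoux-type semigroup argument: a \emph{reverse} gradient bound for the transposition walk, combined with duality. Let $\mathcal L_H g(x)=\sum_{t}(g(tx)-g(x))$, summed over within-block transpositions $t$. Let $P_s=e^{s\mathcal L_H}$ act on the orbit space. We split
\[
 \norm{\1_E-Q_H\1_E}_1\le\norm{\1_E-P_s\1_E}_1+\norm{P_s\1_E-Q_H\1_E}_1
\]
and take $s=c_0/m$. The second term is controlled by the spectral gap. By \cref{thm:external-functional}, each nontrivial block factor has gap equal to its size, hence at least $m$, and gaps tensorize over the product. Therefore $\norm{P_s\1_E-Q_H\1_E}_2\le e^{-ms}\norm{\1_E-Q_H\1_E}_2$. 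With $f=\1_E$ and $\alpha$ its relevant mass, an $L^1$--$L^2$ interpolation for two-valued centered functions (or absorbing a fixed fraction into the left side) leaves the first term as the essential one, up to a constant.

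For the first term we write $\1_E-P_s\1_E=-\int_0^s\mathcal L_HP_u\1_E\,du$. We pair it with a sign function $g$ with $\norm g_\infty\le1$ and use reversibility of the uniform law:
\[
 \E[g(\1_E-P_s\1_E)]=\frac12\int_0^s\sum_t\E\big[(\1_E(tx)-\1_E(x))(P_ug(tx)-P_ug(x))\big]\,du .
\]
For fixed $x$, Cauchy--Schwarz over the $R_{E,H}(x)$ boundary transpositions bounds the inner sum by $\sqrt{R_{E,H}(x)}\,\Gamma(P_ug)(x)^{1/2}$, where $\Gamma(h)(x)=\sum_t(h(tx)-h(x))^2$. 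The whole estimate therefore reduces to a reverse-Poincaré bound of the form
\[
 \norm{\Gamma(P_ug)}_\infty\le\frac{C}{u}\norm g_\infty^2\qquad(0<u\le 1/m).
\]
Granting it, the time integral is $\int_0^s Cu^{-1/2}\,du=C\sqrt s=C/\sqrt m$, which gives \eqref{eq:sqrt-local}. The special case \eqref{eq:sqrt-boundary} is the single block $m=M=n$ on $\mathfrak S_n$ with $Q_H\1_E=U_n(E)$, together with the identity $\norm{\1_E-U_n(E)}_1=2U_n(E)(1-U_n(E))$.

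I expect the reverse gradient bound to be the main obstacle, and I would prove it by coupling. Start walks from $x$ and from $(a\,b)x$, and use the identity coupling for every transposition disjoint from $\{a,b\}$. Whenever a transposition touching $a$ or $b$ rings, relabel so that the two copies stay one transposition apart, and let them coalesce when $(a\,b)$ itself or a suitably matched pair rings. Then $P_ug(x)-P_ug((a\,b)x)=\E[g(X_u)-g(Y_u)]$, and the two copies differ by a single transposition whose endpoints move at rate of order the block size. The contributions over the various $t=(a\,b)$ then have to be summed. Bounding them termwise is too crude, so I would instead bound $\Gamma(P_ug)$ by $\sum_t\Pp(\text{uncoalesced})^2$, or by a martingale or Bakry--\'Emery-type differential inequality for $u\mapsto P_{s-u}\Gamma(P_ug)$ if the coupling proves too lossy. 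The first thing I would try is a direct commutation estimate $\Gamma(P_uh)\le e^{Ku}P_u\Gamma(h)$ with $K$ of order the block size. Integrating $\frac{d}{dv}P_{u-v}\big((P_vg)^2\big)$ would then give $\Gamma(P_ug)\le C\norm g_\infty^2/u$ for $u\le 1/M$.

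If only the range $u\le1/M$ is reachable this way, I would take $s=1/M$ instead. The reverse-Poincaré step would then give $C/\sqrt M$, and the spectral-gap step would be applied blockwise, with the gap of each block equal to its own size. This is where the hypothesis that nontrivial blocks have size at least $m$ enters, and it is the part of the plan I am least sure of.
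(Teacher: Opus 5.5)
Your skeleton (duality against $\norm{g}_\infty\le1$, Cauchy--Schwarz over the $R_{E,H}(x)$ boundary transpositions, a reverse Poincar\'e bound, and the spectral gap) is the paper's. However, the step you call the main obstacle is left unproved, and the routes you list for it do not give the stated constant.

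The missing idea is that there is no curvature loss at all. The set of within-block transpositions is invariant under conjugation by $H$, so $\mathcal L_H$ commutes with every $T_t$, where $T_tg(x)=g(tx)$; hence $P_uT_t=T_tP_u$. Then $P_ug(tx)-P_ug(x)=P_u(T_tg-g)(x)$, and Jensen gives $\Gamma(P_ug)\le P_u\Gamma(g)$ for all $u\ge0$. That is, $K=0$, not $K$ of the order of a block size. Insert this into $P_u(g^2)-(P_ug)^2=\int_0^uP_{u-v}\Gamma(P_vg)\,dv$ (with your normalization of $\Gamma$). Every integrand is then bounded below by $\Gamma(P_ug)$, so $\Gamma(P_ug)\le\norm{g}_\infty^2/u$ for every $u>0$, and $s=1/m$ is admissible regardless of $M$. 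This is exactly how the paper argues.

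Your alternatives fall short quantitatively:
\begin{itemize}
\item With $K\asymp M$ you only get $\Gamma(P_ug)\lesssim\max(1/u,M)\norm{g}_\infty^2$. Both $s=1/m$ and your fallback $s=1/M$ (where the gap factor becomes $(1-e^{-m/M})^{-1}\approx M/m$) then produce $\sqrt M/m$ instead of $1/\sqrt m$. These agree only when $M/m$ is bounded, whereas the lemma's constant is universal. A blockwise gap does not rescue this, because $\1_E$ is not a product function.
\item The coalescing coupling is worse. Two copies one transposition apart merge only when one of them applies the current discrepancy transposition, at rate at most two. So for $u\le1/m$ almost no pair has coalesced, and $\sum_t\Pp(\text{uncoalesced})^2$ is of the order of the number of transpositions. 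A factor $\sqrt m$ is lost already for a single block.
\end{itemize}

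The gap step, as you display it, also does not close. At $s\asymp1/m$ the walk is far from equilibrium in $L^1$ on the scale of small sets. For a single block of size $N$ and $E=\{x_0\}$, one has $\norm{P_s\1_E-Q_H\1_E}_1=(2/N!)\norm{P_s(x_0,\cdot)-U_N}_{\TV}$. At $s=c_0/N$ (about $c_0N/2$ random transpositions, far below the $\frac12N\log N$ cutoff) this total variation tends to one. So $\norm{P_s\1_E-Q_H\1_E}_1$ is asymptotically the whole left side $2\alpha(1-\alpha)$ and cannot be absorbed. Your $L^2$ contraction only yields $e^{-ms}\sqrt{\alpha(1-\alpha)}$, which exceeds $2\alpha(1-\alpha)$ when $\alpha$ is small.

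What is needed is a lower bound for $\norm{f-P_sf}_1$, not an upper bound for $\norm{P_sf-Q_Hf}_1$. For an indicator, $|f-P_sf|=f(1-P_sf)+(1-f)P_sf$ pointwise. So on an orbit where $f$ has mass $\alpha$, the gap at least $m$ gives
\[
 \norm{f-P_sf}_1=2\E\bigl[(f-\alpha)(I-P_s)(f-\alpha)\bigr]\ge(1-e^{-ms})\,2\alpha(1-\alpha).
\]
This is the paper's identity $\norm{f-S_uf}_1=2\{\Var f-\Cov(f,S_uf)\}$. Equivalently, use the triangle inequality in $L^2$ together with $\norm{f-\alpha}_1=2\norm{f-\alpha}_2^2$ and $\norm{f-P_sf}_2^2\le\norm{f-P_sf}_1$. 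Your interpolation remark may have meant this, but it is not the $L^1$ split you wrote. With $s=1/m$, combining this with the reverse Poincar\'e bound proves the lemma with a universal constant.
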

\begin{proof}
Let $\mathcal L=\sum_t(T_t-I)$ sum the allowed transpositions, where $T_tf(x)=f(tx)$ (the right-action convention works identically), and let $S_u=e^{u\mathcal L}$. Define
\[
 \Gamma(f,h)(x)=\frac12\sum_t(f(tx)-f(x))(h(tx)-h(x)),\qquad\Gamma(f)=\Gamma(f,f).
\]
Reversibility gives $-\E f\mathcal Lh=\E\Gamma(f,h)$. The sum of transpositions is central in the group algebra of $H$, so $T_tS_u=S_uT_t$. Jensen yields $\Gamma(S_uh)\le S_u\Gamma(h)$. Differentiation proves
\[
 S_u(h^2)-(S_uh)^2=2\int_0^uS_v\Gamma(S_{u-v}h)\,dv.
\]
Each integrand is at least $\Gamma(S_uh)$ by the preceding commutation inequality. Therefore $\Gamma(S_uh)\le1/(2u)$ when $|h|\le1$. By $L^1$ duality, reversibility, and Cauchy--Schwarz for the finite sum defining $\Gamma$,
\[
 \norm{f-S_uf}_1\le\sup_{|h|\le1}\int_0^u\E\sqrt{\Gamma(f)}\,
 \norm{\sqrt{\Gamma(S_vh)}}_\infty\,dv
 \le\sqrt{2u}\,\E\sqrt{\Gamma(f)}.
\]
The last bound is independent of the test function $h$.
On each $H$-orbit the generator has gap at least $m$: lift functions to the product of uniform symmetric groups, apply \eqref{eq:external-poincare} on each block, and tensorize variance. For an indicator of conditional mass $\alpha$, positivity preservation implies
\[
 \norm{f-S_uf}_{1,\mathrm{orbit}}
 =2\{\Var f-\Cov(f,S_uf)\}
 \ge(1-e^{-mu})\,2\alpha(1-\alpha).
\]
The covariance inequality follows by diagonalizing the symmetric generator. Average over orbits, take $u=1/m$, and use $\Gamma(\1_E)=R_{E,H}/2$ to prove \eqref{eq:sqrt-local}. A single block gives \eqref{eq:sqrt-boundary}.
\end{proof}

\section{Shuffle symmetries and the lower bound}\label{sec:lower}
\subsection{Ergodicity and the physical convention}
A cut vector is $c=(c_1,\ldots,c_{n-1})\in\{0,1\}^{n-1}$, with independent Bernoulli($p$) entries; $c_i=1$ means a cut after position $i$. Every packet reversal $R_c$ is an involution. Thus $P_n$ is symmetric and $U_n$ is reversible. Left multiplication by any fixed label permutation commutes with $P_n$. From the identity, inversion preserves the time-$t$ law, since
\[
 (R_{c_1}\cdots R_{c_t})^{-1}=R_{c_t}\cdots R_{c_1},
\]
and the cut vectors are independent with the same distribution. For $n\ge2$, the identity has one-step probability at least $p^{n-1}>0$. Each isolated adjacent transposition has probability $qp^{n-2}>0$. Adjacent transpositions generate $\Sn$, so $P_n$ is irreducible and aperiodic. The case $n=1$ is a stationary one-point space.

\begin{lemma}[Physical packet-order reversal]\label{lem:physical}
Let $J(i)=n+1-i$. For a fixed cut vector, reversing the order of packets while preserving each packet's internal order is the position permutation $R_cJ$. The physical and packet-reversal chains have identical total-variation and relative-entropy distances from uniform at every integer time.
\end{lemma}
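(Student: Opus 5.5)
The plan is to verify the permutation identity directly on positions, and then to push all copies of $J$ to the right of the time-$t$ product by conjugation.

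\emph{The identity.} Fix a cut vector $c$ with packets $I_1,\ldots,I_r$, listed in order as consecutive position intervals. The new deck is $\sigma R_cJ$, so position $i$ receives the label that stood at position $R_c(J(i))$.
\begin{itemize}
\item The map $J$ sends the packet $I_k$ onto the mirror interval $J(I_k)$. These mirror intervals are exactly the packets of the reflected cut vector, taken in reverse order.
\item Equivalently, compute $R_cJ$ on an interval: for $i$ in the $k$th block from the top of the new deck, $J(i)$ lies in the $k$th packet from the bottom of the old deck.
\item $J$ is decreasing, while $R_c$ reverses each packet. So the composite $R_cJ$ is increasing on each new block and maps it onto a single old packet.
\item Hence each packet is moved as a unit with its internal order preserved, and the packets appear in reversed order. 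This is the physical operation.
\end{itemize}
I would check the endpoints explicitly: the first position of the new top block maps to the first position of the old bottom packet. This confirms that the composite is increasing on each new block and decreasing across blocks, which rules out any reversed convention.

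\emph{The distances.} Let $c^\ast$ denote the reflected vector, with $c^\ast_i=c_{n-i}$. Then
\[
 JR_cJ=R_{c^\ast},
\]
since conjugating by $J$ reflects the packet structure and reflection commutes with reversal inside each interval. Because the entries are i.i.d.\ Bernoulli($p$), the vector $c^\ast$ has the same law as $c$.

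The physical time-$t$ position product is $R_{c_1}JR_{c_2}J\cdots R_{c_t}J$. Moving each $J$ rightward by repeated conjugation, and using $J^2=\id$, rewrites it as
\[
 R_{c_1}R_{c_2^\ast}R_{c_3}R_{c_4^\ast}\cdots R_{\tilde c_t}\,J^{t},
\]
where $\tilde c_j$ equals $c_j$ for odd $j$ and $c_j^\ast$ for even $j$. I would justify this by induction on $t$ and keep track of the alternating parity.

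The reflected vectors, and the unreflected ones, remain independent with the correct law. Therefore the physical time-$t$ law equals $\mu_t$ pushed forward by right multiplication by the fixed permutation $J^t$. Right multiplication by a fixed permutation is a bijection of $\Sn$ that preserves $U_n$, so it preserves both $\norm{\cdot-U_n}_{\TV}$ and $H(\cdot\mid U_n)$. Both distances are independent of the starting deck by relabeling, which finishes the proof.

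The only delicate point is bookkeeping the composition convention, $(ab)(i)=a(b(i))$, in the first step. I expect that to be the main place where an error could occur.
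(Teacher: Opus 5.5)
Your proof is correct and follows the paper's argument. Both rest on the identity $JR_cJ=R_{c^\ast}$ and the reflection invariance of the cut law, and both transfer the distances through the bijection $\sigma\mapsto\sigma J^t$. The only difference is packaging: the paper states the conjugation step at the kernel level ($\mathsf J$ commutes with $P_n$, so $(P_n\mathsf J)^t=P_n^t\mathsf J^t$), whereas you unroll the same step pathwise with alternating reflected cut vectors.
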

\begin{proof}
Reading the one-line permutation $R_c$ backwards reverses packet order and reverses the internal reversal already made; this is exactly $R_cJ$. Reflection of cuts gives $JR_cJ=R_{c^{\rm ref}}$, with $c^{\rm ref}_i=c_{n-i}$ and the same cut law. At the kernel level, right reversal, denoted $\mathsf J$, therefore commutes with $P_n$. The physical kernel is $P_n\mathsf J$, so its $t$th power is $P_n^t\mathsf J^t$. A deterministic bijection preserves uniform measure, total variation, and relative entropy, proving the assertion. The identity is a kernel identity; individual packet reversals need not commute with $J$.
\end{proof}

\subsection{Exact one-card diagonalization}
Track a card currently in position $i$. Its packet contains $L$ positions to its left and $R$ to its right; its new position is $i+R-L$. With independent variables $G_L,G_R$ satisfying $\Pp(G=r)=pq^r$ for $r\ge0$,
\[
 L\stackrel d=\min(G_L,i-1),\qquad R\stackrel d=\min(G_R,n-i).
\]
Independence follows because the two explorations inspect disjoint cut bits. Let $K_n$ be the one-card kernel. Define, for $0\le k<n$,
\[
 \theta_k=\frac{k\pi}{n},\quad
 f_k(i)=\cos\bigl(\theta_k(i-\tfrac12)\bigr),\quad
 \ell_k=2(1-\cos\theta_k).
\]
\begin{proposition}[Finite-deck spectrum]\label{prop:onecard}
The $f_k$ form a basis and
\begin{equation}\label{eq:lambda-k}
 K_nf_k=\lambda_{n,k}f_k,\qquad
 \lambda_{n,k}=\frac{p^2+(-1)^kq^{n+1}\ell_k}{p^2+q\ell_k}.
\end{equation}
For $f=f_1$ and $\lambda_n=\lambda_{n,1}$,
\begin{align}
 \gamma_n:=1-\lambda_n
 &=\frac{q\ell_1(1+q^n)}{p^2+q\ell_1},\label{eq:gap-tag}\\
 -\log\lambda_n
 &=\frac\kappa{n^2}+O_p(n^{-4}+q^nn^{-2})\label{eq:tag-rate}
\end{align}
for all sufficiently large $n$, when $\lambda_n>0$.
\end{proposition}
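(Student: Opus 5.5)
We verify the eigenrelation by direct computation with generating functions. Write $x=\theta_k$, work with the complex exponential $e_k(i)=e^{\mathrm{i}x(i-\frac12)}$, and take real parts at the end. Writing $i+R-L$ for the new position, we have
\[
 K_nf_k(i)=\E\cos\bigl(x(i-\tfrac12)+x(R-L)\bigr),
\]
with $L,R$ independent truncated geometrics. For a truncated geometric $\min(G,m)$,
\[
 \E z^{\min(G,m)}=\sum_{r<m}pq^rz^r+q^mz^m
 =\frac{p+(z-1)q^{m}z^{m}\cdot(-q)/(1-qz)\cdot(-1)}{1-qz}\ \text{(schematically)},
\]
that is, $\E z^{\min(G,m)}=\frac{p}{1-qz}+q^mz^m\bigl(1-\frac{p}{1-qz}\bigr)$. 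We apply this with $z=e^{\mathrm{i}x}$ for $R$ ($m=n-i$) and $z=e^{-\mathrm{i}x}$ for $L$ ($m=i-1$). The product expands into four terms:
\begin{enumerate}[label=(\roman*)]
\item a main term $\frac{p^2}{|1-qe^{\mathrm{i}x}|^2}$ times $e_k(i)$;
\item two boundary terms, carrying $q^{n-i}e^{\mathrm{i}x(n-i)}$ and $q^{i-1}e^{-\mathrm{i}x(i-1)}$ respectively;
\item a corner term with $q^{n-1}$.
\end{enumerate}
Note $|1-qe^{\mathrm{i}x}|^2=p^2+q\ell_k$, which produces the common denominator in \eqref{eq:lambda-k}.

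The key step is to show that after taking the real part of the symmetrized combination, the boundary terms cancel. The cancellation uses the reflection symmetry $f_k(n+1-i)=(-1)^kf_k(i)$ together with $e^{\mathrm{i}xn}=(-1)^k$. The factor $1-\frac{p}{1-qz}=\frac{q(1-z)}{1-qz}$ supplies the factor $q$, and the phase $e^{\mathrm{i}x(n-i)}e^{\mathrm{i}x(i-\frac12)}=(-1)^ke^{-\mathrm{i}x/2}$ is independent of $i$. The boundary pieces should therefore combine with $\cos(x(i-\frac12))$ only through the Neumann-type identity that makes cosines at half-integers natural. The corner term should produce $(-1)^kq^{n+1}\ell_k/(p^2+q\ell_k)$ times $f_k(i)$.

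I expect this bookkeeping to be the main obstacle, since it is easy to lose a phase. I would organize it by first checking the case $i=1$, where $L=0$, and then the general case. Alternatively, I would check the symmetric-matrix form: use the explicit transition probabilities $K_n(i,j)$ (mass $p^2q^{|j-i|}$ in the interior, plus boundary atoms) and verify the eigenvector equation via a summation-by-parts identity.

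The basis claim is routine: $\{f_k\}_{0\le k<n}$ is the DCT-II basis, which is orthogonal on $\{1,\dots,n\}$. Formula \eqref{eq:gap-tag} is algebra from \eqref{eq:lambda-k} at $k=1$, where $(-1)^k=-1$:
\[
 1-\lambda_n=\frac{q\ell_1+q^{n+1}\ell_1}{p^2+q\ell_1}.
\]
For \eqref{eq:tag-rate}, use $\ell_1=\pi^2/n^2+O(n^{-4})$. This gives
\[
 \gamma_n=\frac{q\pi^2}{p^2n^2}\bigl(1+O(n^{-2})\bigr)(1+q^n),
\]
and hence $-\log\lambda_n=\gamma_n+O(\gamma_n^2)=\kappa/n^2+O_p(n^{-4}+q^nn^{-2})$.
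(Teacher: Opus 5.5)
Your route is the paper's: the closed form $\E z^{\min(G,m)}=\frac{p+q^{m+1}z^m(1-z)}{1-qz}$, the product $z^{i-1/2}A_{n-i}(z)A_{i-1}(z^{-1})$ at $z=e^{\mathrm{i}\theta_k}$, the real common denominator $(1-qz)(1-qz^{-1})=p^2+q\ell_k$, and real parts of a four-term numerator. Your handling of the basis claim, \eqref{eq:gap-tag} and \eqref{eq:tag-rate} is fine.

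\textbf{The gap.} The one step that carries the content of \eqref{eq:lambda-k}, disposing of the two boundary terms, is only announced with ``should''. The mechanism you sketch is also not the right one. Nothing cancels between the two terms, no reflection $i\mapsto n+1-i$ is needed for them, and they do not ``combine with $\cos(\theta_k(i-\frac12))$''. Any surviving $i$-dependent contribution of that kind would break the eigenrelation. In fact each cross term is separately purely imaginary once both conjugate denominators are pulled out. With $z^{a}:=e^{\mathrm{i}a\theta_k}$ for half-integer $a$,
\[
 \frac{pq^{n-i+1}(1-z)z^{n-1/2}}{p^2+q\ell_k}=\frac{-2\mathrm{i}(-1)^kpq^{n-i+1}\sin(\theta_k/2)}{p^2+q\ell_k},
 \qquad
 \frac{pq^{i}(1-z^{-1})z^{1/2}}{p^2+q\ell_k}=\frac{2\mathrm{i}\,pq^{i}\sin(\theta_k/2)}{p^2+q\ell_k},
\]
using only $z^n=(-1)^k$. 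Your $i$-independent phase $(-1)^ke^{-\mathrm{i}\theta_k/2}$ was one multiplication by $(1-z)$ away from the first identity.

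The reflection identity $\cos(\theta_k(n-i+\frac12))=(-1)^kf_k(i)$ is what the \emph{corner} term needs. That term is $q^{n+1}\ell_k z^{n-i+1/2}/(p^2+q\ell_k)$, and taking its real part gives $(-1)^kq^{n+1}\ell_kf_k(i)/(p^2+q\ell_k)$. With these two lines your argument is complete and coincides with the paper's proof.

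Your fallback via explicit transition probabilities would not work as stated. The one-card kernel does not have mass $p^2q^{|j-i|}$ in the interior. Even in infinite volume the displacement law is $\frac{p}{2-p}q^{|u|}$, and the truncations $\min(G_L,i-1)$ and $\min(G_R,n-i)$ add $i$-dependent corrections. That summation-by-parts check would therefore start from the wrong matrix.
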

\begin{proof}
For $|z|=1$ and $m\ge0$, summing a finite geometric series gives
\[
 A_m(z):=\E z^{\min(G,m)}
 =p\sum_{j=0}^{m-1}(qz)^j+(qz)^m
 =\frac{p+q^{m+1}(1-z)z^m}{1-qz}.
\]
Take $z=e^{i\theta_k}$, interpreting $z^a=e^{ia\theta_k}$ for every real exponent used here. Then $z^n=(-1)^k$ and
\[
 \E z^{X'-1/2}=z^{i-1/2}A_{n-i}(z)A_{i-1}(z^{-1}).
\]
The denominator is $(1-qz)(1-qz^{-1})=p^2+q\ell_k$. Its numerator is
\begin{align*}
 &p^2z^{i-1/2}
 +pq^i z^{1/2}(1-z^{-1})
 +pq^{n-i+1}z^{n-1/2}(1-z)\\
 &\hspace{35mm}+q^{n+1}\ell_kz^{n-i+1/2}.
\end{align*}
The middle terms have zero real part, because
$z^{1/2}(1-z^{-1})=2i\sin(\theta_k/2)$ and
$z^{n-1/2}(1-z)=-(-1)^k2i\sin(\theta_k/2)$.
The last term has real part $(-1)^kq^{n+1}\ell_k f_k(i)$. Taking real parts proves \eqref{eq:lambda-k}.
The sum formula
\[
 \sum_{j=1}^n\cos((j-\tfrac12)\theta)=\frac{\sin(n\theta)}{2\sin(\theta/2)}
\]
when its denominator is nonzero follows by summing $e^{i(j-1/2)\theta}$. Together with the product-to-sum identity it gives orthogonality of the $n$ cosine vectors. Their squared norms are $n$ for $k=0$ and $n/2$ otherwise, so they form a basis. Finally, $\ell_1=\pi^2/n^2+O(n^{-4})$, and expanding the rational expression and $-\log(1-x)$ proves \eqref{eq:gap-tag}--\eqref{eq:tag-rate}.
\end{proof}

\subsection{A full-deck eigenfunction and its fluctuations}
Write $X_\sigma(a)=\sigma^{-1}(a)$ for the position of label $a$. Define
\begin{equation}\label{eq:Phi}
 \Phi(\sigma)=\sum_{a=1}^nf(a)f(X_\sigma(a)).
\end{equation}
Linearity and \eqref{eq:lambda-k} give
\begin{equation}\label{eq:Phi-mean}
 \E[\Phi(\sigma_{t+1})\mid\sigma_t]=\lambda_n\Phi(\sigma_t),
 \qquad \E\Phi(\sigma_t)=\frac n2\lambda_n^t.
\end{equation}
Under $U_n$, every label position is uniform, and distinct positions are sampled without replacement. Therefore
\[
 \E_Uf(X(a))=0,\quad \E_Uf(X(a))^2=\frac12,\quad
 \E_U[f(X(a))f(X(b))]=-\frac1{2(n-1)}\ (a\ne b).
\]
Using $\sum_af(a)=0$ and $\sum_af(a)^2=n/2$ yields
\begin{equation}\label{eq:Phi-var-U}
 \E_U\Phi=0,\qquad \Var_U\Phi=\frac{n^2}{4(n-1)}.
\end{equation}

\begin{lemma}[Uniform conditional variance]\label{lem:onestep-var}
For every current deck,
\[
 \Var(\Phi(\sigma_1)\mid\sigma_0)\le B_p/n,
 \quad B_p=\frac{4\pi^2q(2-p)^2}{p^3}.
\]
Starting from the identity,
\begin{equation}\label{eq:Phi-var-time}
 \Var\Phi(\sigma_t)\le\frac{B_p}{n(1-\lambda_n^2)}=O_p(n)
\end{equation}
uniformly in time, for sufficiently large $n$.
\end{lemma}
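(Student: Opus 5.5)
The plan is to bound the one-step conditional variance with the Bernoulli variance inequality \eqref{eq:bernoulli-var}, and then get \eqref{eq:Phi-var-time} from the law of total variance together with \eqref{eq:Phi-mean}.

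Fix $\sigma_0$ and view $\Phi(\sigma_0R_c)$ as a function $h(c)$ of the $n-1$ independent cut bits. By \eqref{eq:bernoulli-var},
\[
\Var(\Phi(\sigma_1)\mid\sigma_0)\le pq\sum_j\E|h(c^{j,1})-h(c^{j,0})|^2 .
\]
So we need to control how $\Phi$ changes when a single gap bit $c_j$ is toggled, with the other bits fixed. Toggling $c_j$ affects only the packet(s) touching gap $j$. Suppose the packet containing gap $j$ covers positions $[a,b]$, with $j$ separating $[a,j]$ from $[j+1,b]$. Uncut, this block is reversed as a whole. Cut, the two pieces are reversed separately. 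Every card in $[a,b]$ moves, and the two configurations differ by a cyclic shift inside the block. A card's new position changes by $(b-j)$ or $(j-a+1)$, up to sign.

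Write $A=j-a+1$ and $B=b-j$. Using $|f(x)-f(y)|\le(\pi/n)|x-y|$ and $|f(a)|\le1$, the change in $\Phi$ is at most
\[
(\pi/n)\bigl(A\cdot B+B\cdot A\bigr)=2\pi AB/n .
\]
The lengths $A$ and $B$ are dominated by independent $1+\mathrm{Geom}$ variables with $\Pp(\ge r+1)\le q^r$; this holds with $c_j$ removed from consideration, since they depend on disjoint bits. This gives
\[
\E|\Delta_j h|^2\le\frac{4\pi^2}{n^2}\bigl(\E (1+G)^2\bigr)^2,\qquad \E(1+G)^2=\frac{(2-p)}{p^2}\cdot\text{(const)} .
\]
Summing over $n-1$ gaps yields a bound $C_p/n$. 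Matching the exact constant $B_p=4\pi^2q(2-p)^2/p^3$ requires computing $\E(1+G)^2=(2-p)/p^2$, so the square gives $(2-p)^2/p^4$. Multiplying by $pq$ gives $q(2-p)^2/p^3$, and the factor $4\pi^2/n^2$ times $n$ terms gives $B_p/n$. This bookkeeping is the main delicate point: the constant in the statement comes from the crude bound $|\Delta\Phi|\le 2\pi AB/n$. One should check that the cyclic-shift displacement per card is at most $\max(A,B)\le$ the bound used, so that the count $A$ cards moving by $B$ plus $B$ cards moving by $A$ is right. I expect this to hold with the stated constant or better.

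For \eqref{eq:Phi-var-time}, use
\[
\Var\Phi(\sigma_{t+1})=\E\Var(\Phi(\sigma_{t+1})\mid\sigma_t)+\Var(\lambda_n\Phi(\sigma_t))\le B_p/n+\lambda_n^2\Var\Phi(\sigma_t).
\]
Since $\Var\Phi(\sigma_0)=0$, iterating gives
\[
\Var\Phi(\sigma_t)\le\frac{B_p}{n}\sum_{s<t}\lambda_n^{2s}\le\frac{B_p}{n(1-\lambda_n^2)} .
\]
Finally, \eqref{eq:gap-tag} gives $1-\lambda_n^2\ge\gamma_n\sim\kappa/n^2$, with $0<\lambda_n<1$ for large $n$, so the bound is $O_p(n)$.
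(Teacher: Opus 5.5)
Your proposal is correct and follows the paper's proof essentially step for step. Both use \eqref{eq:bernoulli-var} on the cut bits, the merge/split displacement bound $2\pi AB/n$ with independent $1+G$ domination and $\E(1+G)^2=(2-p)/p^2$, and then the total-variance recursion $V_{t+1}\le B_p/n+\lambda_n^2V_t$ with $V_0=0$. Your hedges about the constant are unnecessary: your computation that left-packet cards shift by $B$ and right-packet cards by $A$ is exactly what gives the stated $B_p$.
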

\begin{proof}
Regard the next value as a function $H$ of the cut bits. Holding all other bits fixed, changing the bit at gap $j$ merges two neighboring packets of lengths $a,b$ or splits their union. Relative to reversing the packets separately, reversing their union moves the destination of every card in the left packet by $b$ and that of every card in the right packet by $a$ in the opposite direction. The interpolation $f(x)=\cos(\pi(x-1/2)/n)$ is $\pi/n$-Lipschitz, and the coefficient of every term in \eqref{eq:Phi} has modulus at most one. Hence
\[
 |H(c^{j,0})-H(c^{j,1})|\le2\pi ab/n.
\]
The lengths $a,b$ depend on disjoint collections of the other cut bits and are independent. Each is bounded stochastically by $1+G$, whose second moment is $(2-p)/p^2$ (differentiate the geometric series twice). Equation \eqref{eq:bernoulli-var} gives
\[
 \Var H\le\frac{4\pi^2pq(n-1)}{n^2}
 \left(\frac{2-p}{p^2}\right)^2\le\frac{B_p}{n}.
\]
The total-variance identity and \eqref{eq:Phi-mean} imply
$V_{t+1}\le B_p/n+\lambda_n^2V_t$, where $V_0=0$. Summing proves the assertion. For fixed $p$, \eqref{eq:gap-tag} makes its right side $O_p(n)$.
\end{proof}

\begin{proposition}[Lower side with a diffusive window]\label{prop:lower}
For every fixed $s>0$,
\[
 \liminf_n d_n(\lfloor t_n^\star-sn^2\rfloor)
 \ge1-C_pe^{-2\kappa s}.
\]
Consequently $t_{\rm mix}^{(n)}(\eps)\ge t_n^\star-O_{p,\eps}(n^2)$ for each fixed $\eps\in(0,1)$, and the lower side of \cref{thm:TV-main} holds.
\end{proposition}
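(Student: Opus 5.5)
The plan is Wilson's second-moment method with the statistic $\Phi$ from \eqref{eq:Phi}, using Chebyshev on both sides. Set $t=\lfloor t_n^\star-sn^2\rfloor$, which is nonnegative for large $n$. By \eqref{eq:Phi-mean} the mean is $\E\Phi(\sigma_t)=\tfrac n2\lambda_n^t$. By \eqref{eq:tag-rate}, $\lambda_n^t=\exp\{-t\kappa/n^2+O_p(t n^{-4}+tq^nn^{-2})\}$. Since $t=O(n^2\log n)$, the error term is $o(1)$, and hence
\[
\lambda_n^t=(1+o(1))\,n^{-1/2}e^{\kappa s}.
\]
The mean is therefore $m_n:=(1+o(1))\tfrac12\sqrt n\,e^{\kappa s}$.

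For the fluctuations, \eqref{eq:Phi-var-time} gives $\Var\Phi(\sigma_t)\le B_p/(n(1-\lambda_n^2))$. From \eqref{eq:gap-tag}, $1-\lambda_n^2\ge\gamma_n\ge c_p n^{-2}$, so this variance is at most $V_p n$ with $V_p$ depending only on $p$. Under $U_n$, \eqref{eq:Phi-var-U} gives mean $0$ and variance $n^2/(4(n-1))\le n/2$ for $n\ge2$. Both standard deviations are therefore $O_p(\sqrt n)$, while the mean separation is of order $\sqrt n\,e^{\kappa s}$.

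Take the set $E=\{\sigma:\Phi(\sigma)\ge m_n/2\}$. Chebyshev's inequality under $U_n$ gives
\[
U_n(E)\le \frac{n/2}{(m_n/2)^2}=(2+o(1))\,e^{-2\kappa s}.
\]
Under $\mu_t$,
\[
\mu_t(E^c)\le\Pp\bigl(|\Phi(\sigma_t)-m_n|\ge m_n/2\bigr)\le\frac{4V_pn}{m_n^2}=(16V_p+o(1))\,e^{-2\kappa s}.
\]
Combining, $d_n(t)\ge\mu_t(E)-U_n(E)\ge1-C_pe^{-2\kappa s}-o(1)$, which is the displayed $\liminf$ bound with $C_p=2+16V_p$.

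For the consequences, fix $\eps\in(0,1)$ and choose $s$ with $C_pe^{-2\kappa s}<(1-\eps)/2$. Then for large $n$, $d_n(\lfloor t_n^\star-sn^2\rfloor)>\eps$. Since $d_n$ is nonincreasing, $t_{\rm mix}^{(n)}(\eps)>t_n^\star-sn^2-1$. For the lower side of \cref{thm:TV-main}, note that $(1-\delta)t_n^\star\le t_n^\star-sn^2$ for every fixed $s$ once $n$ is large, because $\delta t_n^\star/n^2\to\infty$. Monotonicity of $d_n$ then gives $\liminf d_n(\lfloor(1-\delta)t_n^\star\rfloor)\ge1-C_pe^{-2\kappa s}$ for every $s$, and letting $s\to\infty$ yields limit one.

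The only delicate point is the asymptotics of $\lambda_n^t$. The exponent $t\log\lambda_n$ must be accurate to $o(1)$, not just to leading order, which is why the $O(n^{-4})$ error in \eqref{eq:tag-rate} matters. The factor $e^{\kappa s}$ is exact up to $1+o(1)$ also because the floor changes $t$ by at most one. The proposition requires $\lambda_n>0$, which holds for large $n$ since $\gamma_n\to0$.
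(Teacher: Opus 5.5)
Your proof is correct and follows the paper's argument essentially step for step: the test set $\{\Phi\ge M_t/2\}$, Chebyshev under both $\mu_t$ and $U_n$ using \eqref{eq:Phi-var-time} and \eqref{eq:Phi-var-U}, the $o(1)$ control of $t\log\lambda_n$ from \eqref{eq:tag-rate}, and monotonicity of $d_n$ for the consequences. There is one harmless arithmetic slip: under $U_n$, Chebyshev gives $(n/2)/(m_n/2)^2=2n/m_n^2=(8+o(1))e^{-2\kappa s}$, not $(2+o(1))e^{-2\kappa s}$, so your constant should be $C_p=8+16V_p$, which agrees with the paper's bound $4(\Var_{\mu_t}\Phi+\Var_U\Phi)/M_t^2$.
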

\begin{proof}
Put $M_t=\E\Phi(\sigma_t)>0$ for large $n$. Testing $\{\Phi\ge M_t/2\}$ and applying Chebyshev under $\mu_t$ and $U_n$ gives
\[
 d_n(t)\ge1-\frac{4(\Var_{\mu_t}\Phi+\Var_U\Phi)}{M_t^2}.
\]
Equation \eqref{eq:tag-rate} is stronger than merely an $o(n^{-2})$ gap statement: multiplied by $t=O(n^2\log n)$ its error tends to zero. Thus at the time in the proposition,
$M_t=(1/2+o(1))\sqrt n\,e^{\kappa s}$.
Equations \eqref{eq:Phi-var-U} and \eqref{eq:Phi-var-time} finish the bound. Choose $s$ so the lower bound exceeds $\eps$ to obtain the mixing-time assertion. Markov kernels contract total variation, so $d_n$ is decreasing. For fixed $0<\delta<1$, $(1-\delta)t_n^\star\le t_n^\star-sn^2$ eventually for each fixed $s$; then let $s\to\infty$ to get the lower cutoff limit.
\end{proof}

\section{The killed two-card process}\label{sec:killed}
\subsection{An exact discrepancy coupling}
Suppose two decks differ by exchanging exactly two labels, currently at positions $i<j$. When the positions are not adjacent, use identical cut bits in the two decks. This preserves a single transposition discrepancy. When they occupy $i,i+1$, expose all bits except the separating bit, identically. Let $E_i$ be the event that the exterior boundaries isolate the interval $\{i,i+1\}$; a deck endpoint is an automatic boundary. Set
\[
 m_i=\1_{\{i>1\}}+\1_{\{i+1<n\}},\quad b=\min(p,q),\quad r_i=2bp^{m_i}.
\]
Then $\Pp(E_i)=p^{m_i}$. On $E_i$, use the following joint law for the separating bits:
\[
 \begin{array}{c|cc}
 &C'=0&C'=1\\\hline
 C=0&q-b&b\\
 C=1&b&p-b
 \end{array}
\]
Both marginal bits have probability $p$ of being one. They disagree with probability $2b$. Off $E_i$, use the same Bernoulli bit in both decks. Conditional on each exposed exterior configuration, every marginal bit remains Bernoulli($p$), so every marginal shuffle has independent cut bits with the required law. This is a maximum-disagreement coupling, not the usual maximal-agreement coupling of identical laws.
On disagreement, exactly one deck exchanges the two isolated cards, so the decks coalesce. Thereafter use identical updates.

Let
\[
 \cD=\{(i,j):1\le i<j\le n\}
\]
and let $Q_n$ be the kernel of the unordered positions of two distinct tagged cards under an ordinary shuffle. Every $R_c$ induces an involution on unordered pairs, so $Q_n$ is symmetric and stochastic. On $E_i$, the unordered pair remains $(i,i+1)$ regardless of the separating bit. Consequently the kernel before coalescence is exactly
\begin{equation}\label{eq:T-def}
 T_n=Q_n-\operatorname{diag}\bigl(r_i\1_{\{j=i+1\}}\bigr).
\end{equation}
Since $Q_n((i,i+1),(i,i+1))\ge p^{m_i}\ge r_i$, $T_n$ is entrywise nonnegative and symmetric, with row sums at most one. Such a matrix is called \emph{substochastic}; its missing row mass is killing.

\begin{lemma}[Coupling estimate]\label{lem:single-coupling}
If two decks $\sigma,\eta$ differ by a transposition at the pair of positions $z$, then
\[
 \norm{P_n^t(\sigma,\cdot)-P_n^t(\eta,\cdot)}_{\TV}
 \le(T_n^t\1)(z).
\]
Under this coupling, the arbitrary-transposition distance between the decks is either zero or one, and its expectation is $(T_n^t\1)(z)$.
\end{lemma}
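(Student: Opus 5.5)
The plan is to apply the standard coupling inequality to the Markovian coupling constructed just above and to identify the survival probability with $(T_n^t\1)(z)$. First, check that the construction is a legitimate coupling of two copies of $P_n$ started from $\sigma$ and $\eta$. At each step, conditionally on the past, the pair of cut vectors has the correct Bernoulli($p$) product marginals. When the discrepancy positions are non-adjacent this holds because the bits are identical. When they are adjacent, the exterior bits are shared, and the separating bit is drawn from the displayed table on $E_i$ (both marginals Bernoulli($p$)) and shared off $E_i$. After coalescence the updates are identical. Since each marginal step is a fresh cut vector independent of the past, each deck is a Markov chain with kernel $P_n$. Hence $\norm{P_n^t(\sigma,\cdot)-P_n^t(\eta,\cdot)}_{\TV}\le\Pp(\sigma_t\ne\eta_t)$.

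Next, describe the joint state before coalescence. Suppose $\eta=\sigma\tau$ with $\tau$ the position transposition of $z=(i,j)$. Using identical cuts $R_c$, we get $\eta R_c=\sigma R_c(R_c\tau R_c)$, and $R_c\tau R_c$ is the transposition of the image pair. So the decks still differ by exactly one transposition, located at the unordered image pair of $z$ under $R_c$. This also holds on $E_i$ when the separating bits agree, since then the same $R_c$ is applied to both decks. On disagreement within $E_i$, one deck additionally swaps the isolated cards at $i,i+1$, which cancels $\tau$, so the decks coincide. Thus, up to coalescence, the discrepancy position $Z_s\in\cD$ moves according to $Q_n$. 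From an adjacent state $(i,i+1)$, it is killed with probability $\Pp(E_i)\cdot2b=r_i$. On this killing event the pair would otherwise have stayed at $(i,i+1)$, which is exactly the diagonal subtraction in \eqref{eq:T-def}.

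By induction on $t$ using the Markov property, $\Pp(\text{no coalescence by time }t,\ Z_t=w)=T_n^t(z,w)$. Summing over $w$ gives $\Pp(\sigma_t\ne\eta_t)=(T_n^t\1)(z)$. The deck distance is one exactly on non-coalescence and zero afterwards, giving the expectation statement.

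The only delicate point is the joint-law verification at adjacent states. One must confirm that, conditional on every exterior configuration, exposing the separating bit via the table preserves the independent Bernoulli($p$) law of all bits for each deck. One must also confirm that the pair's transition given non-killing still matches $Q_n$ restricted appropriately. This amounts to noting that on $E_i$ the pair's position is unchanged whatever the bit, so the kernel row of the surviving process equals $Q_n$'s row minus $r_i$ on the diagonal.
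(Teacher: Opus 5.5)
Your proposal is correct and follows the paper's route. You check the marginals of the adjacent-state coupling, identify the pre-coalescence discrepancy position as the $Q_n$ chain killed with probability $r_i$ on adjacent states (so its survival probability is $(T_n^t\1)(z)$), and conclude with the coupling inequality. You also spell out details the paper's three-line proof leaves to the preceding construction, namely the identity $\eta R_c=\sigma R_c(R_c\tau R_c)$ and the cancellation of $\tau$ when the separating bits disagree.
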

\begin{proof}
The transition description shows inductively that $T_n^t\1$ is the survival probability of the discrepancy. For any coupling, differences of probabilities of events are at most the probability that its two coordinates disagree. Before coalescence these decks differ by exactly one transposition, proving both assertions.
\end{proof}

\subsection{Local comparison and spectral conventions}
All inner products and $\ell^a$ norms on $\cD$ in this section use \emph{counting measure}, not the uniform probability measure. Write $x\sim y$ if one coordinate of the pair changes by one and the resulting pair is still strictly ordered. Define the triangular nearest-neighbor energy with each undirected edge counted once:
\[
 E_{\rm nn}(g)=\sum_{\{x,y\}:x\sim y}(g(x)-g(y))^2.
\]
For a symmetric stochastic kernel $Q$, its Dirichlet form in counting measure is
$\ip g{(I-Q)g}=\frac12\sum_{x,y}Q(x,y)(g(x)-g(y))^2$.

\begin{lemma}[Uniform local bounds]\label{lem:pair-local-bounds}
Every nearest-neighbor pair edge, including one incident to an adjacent pair, satisfies
\[
 Q_n(x,y)\ge qp^4,\qquad Q_n(x,x)\ge p^4.
\]
Writing $k_p=2\min(p,q)p^2>0$, one has
\begin{equation}\label{eq:stir-dom}
 E_{\rm nn}(g)+\sum_i g(i,i+1)^2
 \le C_p\ip g{(I-T_n)g}.
\end{equation}
\end{lemma}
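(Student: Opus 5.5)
The proof has two parts: explicit cut patterns for the two local bounds, then a comparison of Dirichlet forms for \eqref{eq:stir-dom}.

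\emph{Local bounds.} Take a pair $x=(i,j)$ and a neighbor $y$, say $y=(i,j+1)$ with $j+1\le n$. The other cases ($j-1$, $i\pm1$) follow by the reflection $J$ of \cref{lem:physical} or by the same construction. I will exhibit one explicit cut pattern on at most five bits that sends the pair $x$ to $y$ and leaves the remaining bits free.

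Cut at gaps $j-1$ and $j+1$ and leave gap $j$ uncut. This isolates the packet $\{j,j+1\}$, whose reversal moves the card at $j$ to $j+1$, with probability $qp^2$. The card at $i$ must not move. If $i<j-1$, cut at gaps $i-1$ and $i$, giving a singleton, with probability $p^2$. The fixed bits are distinct, so the probability is at least $qp^4$. Edge cases lower the exponent: a missing deck endpoint is an automatic boundary, and gaps coincide when $i=j-1$, where gap $j-1$ is shared. When $x=(i,i+1)$ is adjacent, the move to $y=(i,i+2)$ still needs the card at $i$ fixed. The cuts at $i-1$, $i$, $i+2$ with gap $i+1$ uncut give probability $qp^3$.

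Any event forcing every tagged card to be a singleton or to return to its position contributes to the diagonal. For instance, cutting at the at most four gaps adjacent to $i$ and $j$ makes both singletons, so $Q_n(x,x)\ge p^4$. The diagonal is the same for $T_n$ up to the killing term, which does not matter here.

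\emph{Energy comparison.} In counting measure,
\[
 \ip g{(I-T_n)g}=\ip g{(I-Q_n)g}+\sum_i r_i\,g(i,i+1)^2 .
\]
This holds because $T_n=Q_n-\operatorname{diag}(r)$ and $Q_n$ is stochastic and symmetric, so $\ip g{(I-Q_n)g}=\frac12\sum_{x,y}Q_n(x,y)(g(x)-g(y))^2\ge0$.

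Restricting the double sum to nearest-neighbor edges gives $\ip g{(I-Q_n)g}\ge qp^4E_{\rm nn}(g)$. Each undirected edge appears twice in the ordered sum, which cancels the factor $\tfrac12$. For the killing term, $r_i=2bp^{m_i}\ge2\min(p,q)p^2=k_p$ since $m_i\le2$. Adding the two bounds yields \eqref{eq:stir-dom} with $C_p=\max\{1/(qp^4),1/k_p\}$.

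\emph{Main obstacle.} The only delicate point is checking, in every boundary configuration, that the chosen bits are distinct and the exponent stays at most four. This covers tagged cards at deck endpoints, the adjacent pair, and moves where the two tagged cards share a packet. In those cases I would choose the packet containing both cards when needed, and verify the resulting unordered pair directly.
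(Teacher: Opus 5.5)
Your proposal is correct and follows essentially the same route as the paper. You exhibit a cut pattern that makes the moving card's two-position interval a packet (one absent cut) and the other tagged card a singleton, using at most four present cuts, shared in the adjacent cases; this gives $Q_n(x,y)\ge qp^4$ and $Q_n(x,x)\ge p^4$, and the identity $\ip g{(I-T_n)g}=\ip g{(I-Q_n)g}+\sum_i r_i g(i,i+1)^2$ with $r_i\ge k_p$ then yields \eqref{eq:stir-dom}. Your closing concern about moves in which both tagged cards share a packet never arises: a nearest-neighbor move changes one coordinate by one, so the moving card's interval never contains the other tagged card, and symmetry of $Q_n$ lets you check each edge in one direction only.
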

\begin{proof}
For a nearest-neighbor pair move, the moving tagged card and its destination form a two-position interval containing no other tagged card. Make that interval a packet of length two, and make the other tagged card a singleton. These requirements prescribe one absent cut and at most four present cuts. They are compatible even if the initial or final tagged positions are adjacent; shared requirements are the same present cut. Missing exterior boundaries only increase the event probability. Thus the move has probability at least $qp^4$. Making both tagged cards singletons prescribes at most four present cuts and proves the holding bound.
Symmetry gives $\ip g{(I-Q_n)g}\ge qp^4E_{\rm nn}(g)$. By \eqref{eq:T-def},
\begin{equation}\label{eq:killed-energy}
 \cE_n(g):=\ip g{(I-T_n)g}
 =\frac12\sum_{x,y}Q_n(x,y)(g(x)-g(y))^2
 +\sum_i r_i g(i,i+1)^2.
\end{equation}
Since $r_i\ge k_p$, \eqref{eq:stir-dom} follows. No nonadjacent endpoint square is attributed to the killing term.
\end{proof}

Let $\rho_n$ be the spectral radius of $T_n$, that is, the maximum modulus of its eigenvalues, and put $\beta_n=1-\rho_n$. The nearest-neighbor graph is connected and every adjacent state has positive killing. Thus $\rho_n<1$: a nonnegative eigenvector with eigenvalue one would attain a maximum, propagate that maximum along positive nearest-neighbor transitions, and contradict the positive killing row. Perron--Frobenius for a nonnegative matrix says that $\rho_n$ itself is an eigenvalue. Symmetry therefore gives
\[
 \norm{T_n}_{2\to2}=\rho_n,\qquad
 \beta_n=\inf_{g\ne0}\frac{\cE_n(g)}{\sum_{i<j}g(i,j)^2}.
\]
These facts do not require positive semidefiniteness. For all sufficiently large $n$, $0<\rho_n<1$; all logarithms of $\rho_n$ below are used in that range.

\subsection{The sharp principal decay rate}\label{subsec:pairlimit}
\begin{proposition}[Killed-pair spectral limit]\label{prop:killed-gap}
For fixed $p\in(0,1)$,
\begin{equation}\label{eq:killed-gap}
 n^2(1-\rho_n)\longrightarrow D\pi^2=\kappa.
\end{equation}
\end{proposition}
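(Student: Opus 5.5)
The plan is to prove matching upper and lower bounds for $n^2\beta_n$. The continuum heuristic is as follows. Away from each other, the two tagged cards each perform the one-card walk $K_n$. Its increment $R-L$ has variance $2q/p^2=2D$, so each coordinate diffuses with generator $D\partial^2$. After rescaling by $n$, the pair lives on the triangle $\{0<x<y<1\}$. The outer sides $x=0$, $y=1$ are reflecting, and killing at adjacency becomes a Dirichlet condition on the diagonal. Reflecting across the diagonal identifies this problem with antisymmetric Neumann functions on the unit square. Its principal mode is $\cos(\pi x)-\cos(\pi y)$, with eigenvalue $D\pi^2=\kappa$.

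\emph{Upper bound on $\beta_n$.} I use the test function $g(i,j)=f_1(i)-f_1(j)>0$ on $\cD$, where $f_1$ is the decreasing cosine of \cref{prop:onecard}. Lift to ordered labelled pairs, that is, two distinct tagged cards with positions $(X,Y)$. Each marginal is exactly $K_n$, so the antisymmetric function $f_1(X)-f_1(Y)$ is an exact eigenfunction with eigenvalue $\lambda_n$.

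Passing to unordered pairs, $Q_ng$ differs from $\lambda_ng$ only on the event that the two cards lie in a common packet and are swapped. This event has probability at most $Cq^{\,j-i}$. On it the error is at most $2|f_1(X')-f_1(Y')|\le C(j-i)/n$.

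Summing over $\cD$, the resulting error in $\ip g{(I-Q_n)g}$ is $O(n\sum_d q^d d^2/n^2)=O(1/n)$. The killing term $\sum_i r_ig(i,i+1)^2$ in \eqref{eq:killed-energy} is likewise $O(n\cdot n^{-2})$. Meanwhile $\sum g^2\asymp n^2$. The variational formula for $\beta_n$ and \eqref{eq:gap-tag} then give $n^2\beta_n\le\kappa+O(1/n)$.

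\emph{Lower bound by compactness.} Let $g_n\ge0$ be the Perron eigenvector with $\sum g_n^2=n^2$, and suppose along a subsequence $n^2\beta_n\to\beta<\kappa$. By \eqref{eq:stir-dom} and the upper bound, $E_{\rm nn}(g_n)+\sum_ig_n(i,i+1)^2\le C_p\beta_n n^2=O(1)$.

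Extend $g_n(\lfloor nx\rfloor,\lfloor ny\rfloor)$ piecewise-linearly to the triangle. This gives a family bounded in $H^1$, so Rellich gives $L^2$ convergence to some $G$ with $\norm G_{L^2}=1$. The diagonal values also have bounded energy: the killing bound gives $\sum_i g_n(i,i+1)^2=O(1)$, while the $L^2$ trace scale is $n\cdot n^{-1}\int G(x,x)^2$. A trace argument combining this with the gradient bound forces $G=0$ on the diagonal.

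Next I pass to the limit in the eigen-equation tested against smooth $\phi$ vanishing near the diagonal. For pairs at distance $\gg1$, the joint law of the two displacements equals the product of the one-card laws, up to $q^{\,j-i}$ errors. Both displacements have exponential tails and variance $2D$. A second-order Taylor expansion therefore gives $n^2\ip{(I-Q_n)^*\phi}{g_n}\to -D\int G\,\Delta\phi$, including reflecting boundary terms at $x=0,y=1$. The truncation at the outer boundary only produces Neumann data.

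Hence $G$ is a weak Dirichlet-diagonal, Neumann-outer eigenfunction with eigenvalue $\beta$ for $-D\Delta$. Its principal eigenvalue on the triangle is exactly $\kappa$, which contradicts $\beta<\kappa$.

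\emph{Main obstacle.} The hardest step is the strip within distance $O(1)$ of the diagonal, where the two cards' moves are dependent and the killing acts. I must show that this strip carries neither $L^2$ mass nor a spurious boundary condition in the limit. My first attempt is to use the quadratic-form version rather than the weak equation. I would prove $\liminf n^2\cE_n(g_n)\ge D\int|\nabla G|^2$ by discarding the nonnegative contributions from near-diagonal pairs and from swap moves, and applying the product-law expansion elsewhere. This gives $\beta\ge D\int|\nabla G|^2\ge\kappa$ by the Rayleigh principle for the limiting problem. The diagonal Dirichlet condition then comes only from the trace bound above.
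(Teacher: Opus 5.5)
Your argument is correct in outline. The lower half follows the paper's skeleton, but your upper bound takes a genuinely different and sharper route.

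\textbf{Upper bound.} The paper uses the same test function $f_1(i)-f_1(j)$, but it evaluates the kernel energy asymptotically. It uses the local product law \eqref{eq:pair-local}, Taylor expansion, Riemann sums, truncation of long jumps and of a boundary strip, and three successive limits. You instead observe that $f_1(X)-f_1(Y)$ is an exact $\lambda_n$-eigenfunction of the labelled two-card chain, because each marginal is $K_n$. The unordered kernel then deviates from it only on the same-packet event, whose probability is exactly $q^{j-i}$. This gives $n^2\beta_n\le\kappa+O(1/n)$ with no limiting procedure at all.

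Even your swap correction is unnecessary. On the swap event $f_1(X')-f_1(Y')<0$, while the unordered value is its absolute value. Hence $Q_ng\ge\lambda_ng$ pointwise, and $\ip{g}{(I-Q_n)g}\le\gamma_n\norm{g}_2^2$ exactly; only the killing term costs $O(n^{-3})$ in $\beta_n$. This rate is one-sided, so it does not affect the spectral-center issue discussed after \eqref{eq:tv-quantile}, which would need a lower bound on $\beta_n$.

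\textbf{Lower bound.} Your quadratic-form route is the paper's. The difference is where the diagonal Dirichlet condition enters. You recover it afterwards by a trace argument on the triangle. The paper builds it into the interpolation: the antisymmetric array with $a_{ii}=0$ satisfies $E_\square(a)=2E_{\rm nn}(g_n)+4\sum_ig_n(i,i+1)^2$. The limit is therefore automatically an antisymmetric $H^1$ function on the square, and the mean-zero Neumann Poincar\'e inequality finishes the proof with no trace theory. Your version must still extend the interpolant across the width-$1/n$ strip beside the diagonal, and then reflect $G$ to obtain the constant $\pi^2$. Interpolating to zero on that strip is precisely the paper's device.

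\textbf{Details to correct.}
\begin{itemize}
\item A nonzero diagonal trace would force $\sum_ig_n(i,i+1)^2\approx n\int_0^1G(x,x)^2\,dx$, not $n\cdot n^{-1}\int G(x,x)^2$. It is this factor $n$ that lets your $O(1)$ killing bound (indeed any $o(n)$ bound) force the trace to vanish.
\item With $\sum g_n^2=n^2$, the quantity to bound below is $\liminf\cE_n(g_n)$, which equals the limit of $n^2\beta_n$ along your subsequence; it is not $\liminf n^2\cE_n(g_n)$.
\item The liminf step cannot expand $g_n$, which is not smooth. For each fixed jump $w$, it needs weak $L^2$ convergence of the scaled differences $n[V_n(\cdot+w/n)-V_n]$ of the piecewise-constant rescaling to $w\cdot\nabla G$. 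Weak lower semicontinuity is then applied to finitely many jumps at a time before letting the truncation grow.
\item The claim in your weak-equation paragraph that outer truncation ``only produces Neumann data'' is unjustified as written. The quadratic-form route makes that paragraph unnecessary.
\end{itemize}
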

\begin{proof}
We give both variational bounds, specifying the interpolation and all limiting operations.

\paragraph{Interior jump law and uniform moments.}
In infinite volume a tagged displacement is $Z=G_R-G_L$, so direct convolution gives
\begin{equation}\label{eq:jump-law}
 k(u):=\Pp(Z=u)=\frac{p}{2-p}q^{|u|},\quad
 \sum_uuk(u)=0,\quad\sum_uu^2k(u)=2D.
\end{equation}
Finite-deck tagged displacements are bounded in absolute value by $G_L+G_R$, so they have uniformly exponentially decaying tails. Sorting the two destinations minimizes their sum of squared displacements from the ordered starting positions: if two destinations are reversed, switching their assignments lowers the squared cost by a nonnegative product of their separations. Thus the squared displacement of the ordered pair is at most the sum of squared tagged displacements. It follows that the ordered-pair displacement has bounded moments of every fixed order and a uniform exponential tail.

Fix $0<\delta<1/3$. For pairs whose endpoints and mutual separation are all at least $\delta n$, stop each packet exploration after $\lfloor\delta n/4\rfloor$ gaps. The four inspected sets are disjoint. The stopped explorations agree with independent infinite-volume explorations unless at least one survives the stopping length, an event of probability at most $C_pq^{\delta n/4}$. Hence for each fixed finite set of jumps $(u,v)$,
\begin{equation}\label{eq:pair-local}
 Q_n((i,j),(i+u,j+v))=k(u)k(v)+O_p(q^{\delta n/4})
\end{equation}
uniformly on these pairs. The ordered and tagged descriptions agree on the successful stopped event.

\paragraph{Rayleigh upper bound.}
Use $h_n(i,j)=f_1(i)-f_1(j)$. The cosine identities give
\[
 \sum_{i<j}h_n(i,j)^2=n\sum_if_1(i)^2-\Big(\sum_if_1(i)\Big)^2=n^2/2.
\]
On $\cT=\{(x,y):0<x<y<1\}$ put $h(x,y)=\cos(\pi x)-\cos(\pi y)$. Symmetry on the square gives
\[
 \int_\cT h^2=1/2,\qquad\int_\cT|\nabla h|^2=\pi^2/2.
\]
For fixed $M$, retain $|u|,|v|\le M$ in the interior. Taylor expansion of $h$ about the cell centers, \eqref{eq:pair-local}, and Riemann summation give the corresponding continuum quadratic form. Uniform second-moment tails bound the omitted jump energy by a quantity tending to zero as $M\to\infty$, uniformly in $n$. A boundary strip of width $\delta n$ contains $O(\delta n^2+n)$ states; the $C/n$-Lipschitz test and the uniform pair second moment bound its energy by $C_p(\delta+n^{-1})$. Let first $n\to\infty$, then $M\to\infty$, and then $\delta\downarrow0$. The kernel energy converges to $D\int_\cT|\nabla h|^2$. Also $|h_n(i,i+1)|\le\pi/n$, so its killing energy is $O_p(n^{-1})$. Thus
\begin{equation}\label{eq:pair-limsup}
 \limsup_n n^2\beta_n\le D\pi^2.
\end{equation}

\paragraph{Square extension and preservation of norm.}
Take Rayleigh minimizers $g_n$ normalized by $n^{-2}\sum_{i<j}g_n(i,j)^2=1$. Along any subsequence their energies are bounded by \eqref{eq:pair-limsup}. Define a square array
\[
 a_{ij}=\begin{cases}g_n(i,j),&i<j,\\0,&i=j,\\-g_n(j,i),&i>j.\end{cases}
\]
Let $E_\square$ be the sum of squared horizontal and vertical differences of this $n$ by $n$ array, each edge once. Edge counting gives the exact identity
\begin{equation}\label{eq:square-extension}
 E_\square(a)=2E_{\rm nn}(g_n)+4\sum_i g_n(i,i+1)^2.
\end{equation}
It is bounded by \eqref{eq:stir-dom}. Place $a_{ij}$ at $((i-1/2)/n,(j-1/2)/n)$, copy nearest values to the outer boundary, and interpolate affinely on rectangles divided by their southwest--northeast diagonal. This triangulation is invariant under exchanging coordinates; call the antisymmetric interpolant $\mathcal I_n$. Let $V_n$ equal $a_{ij}$ on $[(i-1)/n,i/n)\times[(j-1)/n,j/n)$.
Elementary affine interpolation gives
\begin{equation}\label{eq:interpolation}
 \int|\nabla \mathcal I_n|^2\le CE_\square(a),\quad
 \norm{\mathcal I_n-V_n}_2^2\le Cn^{-2}E_\square(a),\quad
 \norm{V_n}_2^2=2.
\end{equation}
To verify the bounds, subtract a corner value on each triangle. The affine variation is bounded by its vertex differences, and its gradient is bounded by those differences divided by the mesh. Multiply by triangle area and sum. Each side difference is charged a bounded number of times; a diagonal difference is bounded by twice the two squared orthogonal side differences. Copied boundary values add no new differences, and boundary half-rectangles have comparable aspect ratio. This proves both inequalities with a universal constant.

Here $H^1([0,1]^2)$ denotes square-integrable functions with square-integrable weak first derivatives: a weak derivative $\partial_i W$ satisfies $\int W\partial_i\varphi=-\int(\partial_i W)\varphi$ for every smooth test function $\varphi$ compactly supported in the open square. Strong $L^2$ convergence means convergence in its norm; weak $L^2$ convergence means convergence of inner products against every fixed $L^2$ function. The sequence $\mathcal I_n$ is bounded in this space by \eqref{eq:interpolation}. For completeness, its compactness in $L^2$ can be seen using the orthonormal square cosine basis $e_{ab}(x,y)=c_ac_b\cos(a\pi x)\cos(b\pi y)$, where $c_0=1$ and $c_j=\sqrt2$ for $j>0$. For any $W\in H^1$ put $\widehat W_{ab}=\int_{[0,1]^2}We_{ab}$. Parseval and the weak-derivative identity give
\[
 \norm{W}_2^2=\sum_{a,b\ge0}|\widehat W_{ab}|^2,\qquad
 \norm{\nabla W}_2^2=\pi^2\sum_{a,b\ge0}(a^2+b^2)|\widehat W_{ab}|^2.
\]
These identities follow first for smooth functions by even reflection to a periodic square and Fourier expansion, and then for $H^1$ functions by reflection and mollification. The squared mass above frequency $K$ is at most $C/K^2$. Extract convergent subsequences of finitely many low coefficients and use this tail bound. Thus a subsequence converges strongly in $L^2$ to an antisymmetric $U$ with norm squared two. Bounded gradients have a weakly convergent subsequence in the Hilbert space $L^2$; testing against smooth functions identifies them as the weak derivatives of $U$. Equation \eqref{eq:interpolation} gives the same strong limit for $V_n$. Restricting $U$ to $\cT$ gives a function $g$ with $\int_\cT g^2=1$.

\paragraph{Finite-displacement lower bounds.}
Define
\[
 \cT_\delta=\{(x,y):x>\delta,\ y<1-\delta,\ y-x>\delta\}.
\]
For a fixed vector $w=(u,v)\in\mathbb Z^2$, the scaled difference
$n[V_n(\cdot+w/n)-V_n]$ is bounded in $L^2$ on any such interior: telescoping along at most $|u|+|v|$ grid edges and Cauchy--Schwarz bound its squared norm by $C_wE_\square(a)$. For a smooth test $\varphi$ compactly supported in the interior,
\[
 \int n[V_n(x+w/n)-V_n(x)]\varphi(x)\,dx
 =\int V_n(x)n[\varphi(x-w/n)-\varphi(x)]\,dx.
\]
Strong $L^2$ convergence therefore identifies the weak limit as $w\cdot\nabla g$. Retain only jumps with $|u|,|v|\le M$ and starts whose cell centers lie in $\cT_{\delta/2}$. For large $n$ these include the cells needed for the integrals on $\cT_\delta$. Equation \eqref{eq:pair-local} applies uniformly. Its exponentially small coefficient error multiplies a bounded finite-displacement energy, so vanishes. Weak lower semicontinuity in $L^2$, applied to the finite list of differences, yields
\begin{equation}\label{eq:finite-jump-liminf}
 \liminf_n\cE_n(g_n)\ge
 D_M\int_{\cT_\delta}|\nabla g|^2,\quad
 D_M=\frac12\left(\sum_{|u|\le M}u^2k(u)\right)
                   \left(\sum_{|v|\le M}k(v)\right).
\end{equation}
Indeed symmetry cancels the mixed derivative term, and the two pure derivative coefficients agree. Killing and all omitted jump terms are nonnegative. Now let $M\to\infty$; \eqref{eq:jump-law} gives $D_M\to D$. Then let $\delta\downarrow0$, exhausting $\cT$, to obtain $\liminf_n\cE_n(g_n)\ge D\int_\cT|\nabla g|^2$.

\paragraph{The continuum constant.}
Antisymmetry gives $\int_{[0,1]^2}U=0$. In the same cosine expansion the constant coefficient vanishes, so
\[
 \int_{[0,1]^2}|\nabla U|^2\ge\pi^2\int_{[0,1]^2}U^2.
\]
Both integrals are twice their triangular counterparts. Hence $\int_\cT|\nabla g|^2\ge\pi^2$. Since the normalization makes $\cE_n(g_n)=n^2\beta_n$, this proves the lower bound matching \eqref{eq:pair-limsup}. Every subsequence attaining the original liminf admits this extraction, so the asserted limit holds. The diagonal Dirichlet condition is encoded by the antisymmetric extension; no unproved convergence of boundary traces is required.
\end{proof}

\subsection{A grid Nash inequality and discrete-time smoothing}
\begin{lemma}[Triangular grid Nash inequality]\label{lem:nash}
For counting-measure norms on $\cD$,
\begin{equation}\label{eq:nash}
 \norm{u}_2^4\le C\norm{u}_1^2
 \left(E_{\rm nn}(u)+n^{-2}\norm{u}_2^2\right).
\end{equation}
\end{lemma}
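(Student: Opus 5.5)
We prove \eqref{eq:nash} by the same square extension used in the proof of \cref{prop:killed-gap}, which turns it into a Nash inequality on the full $n\times n$ grid. There the classical two-dimensional Nash inequality for Neumann-type functions holds with the lower-order term $n^{-2}\norm{u}_2^2$. The extension will be by zero on the diagonal and antisymmetric (or simply symmetric in absolute value) across it. Given $u$ on $\cD$, define the array $a_{ij}=u(i,j)$ for $i<j$, $a_{ii}=0$, and $a_{ij}=u(j,i)$ for $i>j$. Then $\norm{a}_1=2\norm{u}_1$ and $\norm{a}_2^2=2\norm{u}_2^2$. Edge counting as in \eqref{eq:square-extension} gives $E_\square(a)\le2E_{\rm nn}(u)+4\sum_iu(i,i+1)^2$. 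The diagonal terms are not controlled by $E_{\rm nn}$ alone, so we should not use the zero diagonal. Instead we extend symmetrically and \emph{copy} the nearest off-diagonal value onto the diagonal, setting $a_{ii}=u(i,i+1)$ (with $u(n-1,n)$ for $i=n$). The new diagonal differences are then themselves nearest-neighbor differences of $u$, up to one extra step. This gives
\[
 E_\square(a)\le CE_{\rm nn}(u),\qquad \norm{a}_1\le C\norm{u}_1,\qquad \norm{a}_2^2\ge\norm{u}_2^2,
\]
since each diagonal entry duplicates a single off-diagonal entry a bounded number of times.

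It then suffices to prove the full-grid inequality $\norm{a}_2^4\le C\norm{a}_1^2(E_\square(a)+n^{-2}\norm{a}_2^2)$ on $\{1,\dots,n\}^2$. I would prove it through the $L^1$ Sobolev (isoperimetric) inequality on the grid,
\[
 \norm{a-\bar a}_{\ell^2}\le C\sum_{\rm edges}|a(x)-a(y)|,\qquad \bar a=n^{-2}\textstyle\sum a,
\]
which is the discrete Gagliardo--Nirenberg inequality in dimension two. This is the first step to establish. It follows from the one-dimensional bound $\max_i|b_i-b_k|\le\sum|\Delta b|$ applied along rows and columns, using the standard Loomis--Whitney product trick: $|a(i,j)|^2\le(\sum_{i'}|\partial_1a(i',j)|+|a\text{-avg}|)(\sum_{j'}|\partial_2a(i,j')|+\dots)$, then summing over $i,j$. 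Constants are handled by subtracting row and column averages, each bounded by $n^{-1}$ times row sums.

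Next, apply the $L^1$ Sobolev inequality to $a^2$ (more precisely, to $|a|$ squared). Since $|a^2(x)-a^2(y)|\le|a(x)-a(y)|(|a(x)|+|a(y)|)$, Cauchy--Schwarz gives
\[
 \norm{a^2}_2\le C E_\square(a)^{1/2}\norm{a}_2+n^{-1}\norm{a}_2^2,
\]
where $\norm{\overline{a^2}}_{\ell^2}=n^{-1}\norm{a}_2^2$. Finally interpolate: $\norm{a}_2^2=\sum|a|\,|a|\le\norm{a}_1^{2/3}\norm{a^2}_2^{2/3}\cdot(\ldots)$, or more cleanly $\norm{a}_2^2\le\norm{a}_1^{1/2}\norm{a}_3^{3/2}$ with $\norm{a}_3^3$ controlled similarly. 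The simplest route is Hölder in the form $\norm{a}_2^2\le\norm{a}_1\norm{a}_\infty$, which is too weak. So I would instead use the truncation (level-set) form of the classical Nash argument: split $a$ at level $\lambda$, bound $\sum_{|a|\le\lambda}a^2\le\lambda\norm{a}_1$, bound the large part by the Sobolev step applied to $(|a|-\lambda)_+$, and optimize $\lambda$. This yields $\norm{a}_2^4\le C\norm{a}_1^2(E_\square(a)+n^{-2}\norm{a}_2^2)$.

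Transferring back through the extension bounds proves \eqref{eq:nash}. The main obstacle is the diagonal: ensuring the extension keeps $\ell^2$ mass without introducing energy not dominated by $E_{\rm nn}$. The copying construction above is what I would try first. If its bookkeeping fails near the corners $(1,2)$ and $(n-1,n)$, the fallback is the zero-diagonal extension, absorbing $\sum u(i,i+1)^2$ by the one-dimensional bound $u(i,i+1)^2\le2u(i,i+2)^2+2(\text{edge difference})^2$ iterated along a short path. This again charges it to $E_{\rm nn}$ and the $n^{-2}\norm{u}_2^2$ term after averaging over path lengths up to $n$.
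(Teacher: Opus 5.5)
Your reduction is the same as the paper's: the symmetric extension with the diagonal copied from $u(i,i+1)$ (and $u(n-1,n)$ at $i=n$). The only nonzero new differences are $u(i,i+1)-u(i-1,i)$, which route through $u(i-1,i+1)$. At $i=1$ and $i=n$ the copied value equals both off-diagonal neighbours, so the corners cause no trouble. When transferring the lower-order term back you also need the trivial bound $\norm{a}_2^2\le4\norm{u}_2^2$.

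The genuine difference is the full-grid Nash step. The paper reflects evenly to a torus of side $2n$ and splits frequencies. Each Fourier coefficient is at most $\norm{v}_1/M^2$, which controls the low frequencies, and the torus Laplacian eigenvalues bound the high frequencies by the energy. It then optimizes the cutoff $K$ and treats the degenerate case with the constant mode. You instead use the two-dimensional $L^1$ Sobolev inequality, proved by multiplying one-dimensional telescoping bounds along rows and columns, apply it to $a^2$, and finish with H\"older. Your route is purely real-variable: it needs no reflection, no periodization and no spectral formula, and it works directly on the box with its free boundary. The Fourier route is shorter once the torus spectrum is available, and it does not rely on the special fact that in dimension two the $L^1$ Sobolev exponent is exactly $2$.

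A few loose ends remain.
\begin{itemize}
\item The product trick as you sketch it proves $\norm{a}_2\le\sum_{\rm edges}|a(x)-a(y)|+n^{-1}\norm{a}_1$, not the mean-subtracted form you state. This is exactly the form you need, since for $a^2$ the lower-order term is $n^{-1}\norm{a^2}_1=n^{-1}\norm{a}_2^2$.
\item Your first interpolation already closes, so the truncation detour is unnecessary. H\"older with exponents $3/2$ and $3$ gives $\norm{a}_2^2\le\norm{a}_1^{2/3}\norm{a^2}_2^{2/3}$ with no extra factor. Hence $\norm{a}_2^6\le\norm{a}_1^2\norm{a^2}_2^2\le2\norm{a}_1^2\norm{a}_2^2\bigl(C^2E_\square(a)+n^{-2}\norm{a}_2^2\bigr)$, and dividing by $\norm{a}_2^2$ gives the claim.
\item Drop the fallback. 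The diagonal trace $\sum_iu(i,i+1)^2$ cannot be charged to $E_{\rm nn}(u)+n^{-2}\norm{u}_2^2$: for $u\equiv1$ the left side is $n-1$ and the right side is about $1/2$. The copied diagonal is exactly what avoids needing such a bound.
\end{itemize}
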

\begin{proof}
For $n\ge3$, extend symmetrically off the diagonal by $b_{ij}=u(\min(i,j),\max(i,j))$. Put $b_{ii}=u(i,i+1)$ for $i<n$ and $b_{nn}=u(n-1,n)$. Then $\norm b_2^2\ge2\norm u_2^2$, $\norm b_1\le4\norm u_1$, and $E_\square(b)\le CE_{\rm nn}(u)$. The only nontrivial comparison for the latter is between $u(i-1,i)$ and $u(i,i+1)$, routed by two edges through $u(i-1,i+1)$; every such path has bounded length and bounded congestion. Reflect $b$ evenly to a torus of side $2n$ by the index list $1,\ldots,n,n,\ldots,1$ in each coordinate. Torus norms and energies differ by fixed factors from the square ones.

On a torus of side $M=2n$, Fourier expansion gives for $1\le K\le cM$ a bound on squared $L^2$ mass at frequencies of magnitude at most $K$ of $CK^2M^{-2}\norm v_1^2$, since each Fourier coefficient has modulus at most $\norm v_1/M^2$. The high-frequency mass is at most $CM^2K^{-2}E_{\rm tor}(v)$, because the nearest-neighbor eigenvalue is bounded below by $cK^2/M^2$ there. If $M^2\norm v_2^2/\norm v_1^2$ is bounded, the constant-mode term alone gives the desired inequality. Otherwise choose an integer $K$ comparable to a sufficiently small multiple of $M\norm v_2/\norm v_1$, so that the low-frequency contribution is at most $\norm v_2^2/2$. Absorbing it into the high-frequency estimate gives $\norm v_2^4\le C\norm v_1^2E_{\rm tor}(v)$. Combining the cases and the extension comparisons proves \eqref{eq:nash}. The one-state triangle at $n=2$ is covered by enlarging $C$.
\end{proof}

\begin{lemma}[Pair-kernel smoothing]\label{lem:smoothing}
With $m=\lceil n^2\rceil$,
\[
 \max_{z\in\cD}\norm{Q_n^m(z,\cdot)}_2\le C_p/n,
 \qquad
 \max_{z\in\cD}\norm{T_n^m(z,\cdot)}_2\le C_p/n.
\]
\end{lemma}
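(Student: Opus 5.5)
The approach is the standard Nash-to-ultracontractivity argument for a symmetric kernel in counting measure on $\cD$. The two-dimensional Nash inequality \cref{lem:nash} gives $\ell^1\to\ell^2$ decay of order $t^{-1/2}$ on a state space with $\asymp n^2$ points. Taking $t=m\asymp n^2$ then yields $\norm{Q_n^m(z,\cdot)}_2\le C_p/n$.

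Two issues are specific to discrete time. First, the Dirichlet form of $Q_n^2$ must be compared with $E_{\rm nn}$. Second, negative spectrum has to be avoided, since $Q_n$ need not be positive semidefinite. I would therefore work with the second power $Q_n^2$ (resp.\ $T_n^2$), which is symmetric, nonnegative and positive semidefinite. I would also use $\norm{Q^m(z,\cdot)}_2\le\norm{Q^{2\lfloor m/2\rfloor}(z,\cdot)}_2$, valid because $Q$ is a contraction on $\ell^2$.

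For the Dirichlet form comparison, note $\ip g{(I-Q^2)g}=\ip g{(I-Q)(I+Q)g}$. Since $Q^2$ has holding probability at least $Q(x,x)^2\ge p^8$ and transitions along each nearest-neighbor edge at least $Q(x,x)Q(x,y)\ge qp^8$, \cref{lem:pair-local-bounds} gives directly
\[
\tfrac12\sum_{x,y}Q^2(x,y)(g(x)-g(y))^2\ge qp^8E_{\rm nn}(g).
\]
The same works for $T_n^2$: the killing term of $I-T_n^2$ dominates a multiple of $\sum_i g(i,i+1)^2$ (row deficits of $T^2$ are at least $r_i T(x,x)$-type quantities), and in any case the killing only adds a nonnegative term. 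So $\ip g{(I-S)g}\ge c_pE_{\rm nn}(g)$ for $S\in\{Q_n^2,T_n^2\}$.

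Now fix $z$ and put $u_k=S^k\delta_z$. Then $\norm{u_k}_1\le1$ is nonincreasing, since $S$ is substochastic and symmetric, so it is also column-substochastic. Next,
\[
\norm{u_k}_2^2-\norm{u_{k+1}}_2^2=\ip{u_k}{(I-S^2)u_k}\ge\ip{u_k}{(I-S)u_k}\ge c_pE_{\rm nn}(u_k),
\]
using $0\le S\le I$. Feeding \cref{lem:nash}, $E_{\rm nn}(u)\ge c\norm u_2^4/\norm u_1^2-n^{-2}\norm u_2^2$, into this gives a recursion for $a_k=\norm{u_k}_2^2$:
\[
a_k-a_{k+1}\ge c_p a_k^2-C_p n^{-2}a_k.
\]
The $n^{-2}$ term is harmless up to times $k\le n^2$. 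Writing $b_k=e^{-Ck/n^2}a_k$, or simply noting that the correction multiplies by at most $e^{C}$ over $n^2/2$ steps, the standard comparison $1/a_{k+1}-1/a_k\ge c_p$ (valid since $a_{k+1}\le a_k$) gives $a_k\le C_p/k$. At $k=\lfloor m/2\rfloor\asymp n^2$ this yields $\norm{S^k\delta_z}_2^2\le C_p/n^2$. Then $\norm{Q^m(z,\cdot)}_2=\norm{Q^m\delta_z}_2\le C_p/n$, by symmetry of rows and columns, and identically for $T_n$.

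The main obstacle is handling the $n^{-2}\norm u_2^2$ term in the discrete difference inequality rigorously. When $a_k\gtrsim n^{-2}$, i.e.\ the regime of interest, one has $c_pa_k^2\ge 2C_pn^{-2}a_k$ once $a_k\ge C'/n^2$. So either $a_k$ already is at most $C'/n^2$, and then stays there by monotonicity, or the effective inequality $a_k-a_{k+1}\ge (c_p/2)a_k^2$ holds. Splitting into these two cases closes the argument cleanly.
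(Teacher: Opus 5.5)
Your proposal is correct and is essentially the paper's argument. It uses the same Nash-driven recursion $a_k-a_{k+1}\ge c_pa_k^2-C_pn^{-2}a_k$, closed by a threshold of order $n^{-2}$. The only differences are that the paper controls negative spectrum through $I-Q_n^2\succeq2p^4(I-Q_n)$ (eigenvalues of $Q_n$ are at least $2p^4-1$) instead of passing to $Q_n^2$, and it gets the $T_n$ bound simply from $0\le T_n^m\le Q_n^m$ entrywise.

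That entrywise route also avoids a detail in your direct treatment of $T_n^2$. $T_n(x,x)$ can vanish at an adjacent pair: for $x=(1,2)$ it equals $p(1-2\min(p,q))$, which is $0$ at $p=1/2$. So there you need $T_n^2(x,y)\ge T_n(x,y)T_n(y,y)$, with $y$ the endpoint of the nearest-neighbor edge that is not an adjacent pair. Such an endpoint always exists because the two separations differ by one.
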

\begin{proof}
Let $h=p^4$. The holding bound gives $Q_n=hI+(1-h)\widetilde Q_n$ with $\widetilde Q_n$ symmetric stochastic. Its eigenvalues lie in $[-1,1]$, so those of $Q_n$ are at least $2h-1$. For every such eigenvalue $a$, $1-a^2\ge2h(1-a)$; hence
\begin{equation}\label{eq:holding-comparison}
 I-Q_n^2\succeq2p^4(I-Q_n),
\end{equation}
where $\succeq$ denotes nonnegativity of the difference as a quadratic form. No holding probability of $1/2$ is assumed.
For the row distribution $v_s=Q_n^s(z,\cdot)$ put $u_s=\norm{v_s}_2^2$. Its $\ell^1$ norm is one. Equations \eqref{eq:holding-comparison}, \eqref{eq:nash}, and \cref{lem:pair-local-bounds} give
\[
 u_s-u_{s+1}=\ip{v_s}{(I-Q_n^2)v_s}
 \ge c_pu_s^2-C_pn^{-2}u_s.
\]
Above $K_p/n^2$ this is at least $c'_pu_s^2$ for a suitable $K_p$. While above that threshold,
$1/u_{s+1}-1/u_s\ge c'_p$; if $u_{s+1}=0$ the conclusion is immediate. Below the threshold, $\ell^2$ contraction keeps it there. Consequently $u_s\le C_p((s+1)^{-1}+n^{-2})$. Take $s=m$. Finally $0\le T_n^m\le Q_n^m$ entrywise, by induction on powers of nonnegative matrices, proving the second estimate.
\end{proof}

\subsection{Flux and adjacent discrepancies}
Write $a_t=T_n^t\1$ and $r=(I-T_n)\1$, so $r(i,i+1)=r_i\ge k_p$ and $r=0$ elsewhere.
\begin{lemma}[Loss-of-mass identity]\label{lem:flux}
For $m=\lceil n^2\rceil$ and every integer $s\ge0$,
\begin{align}
 \sum_{i=1}^{n-1}a_m(i,i+1)&\le C_p,\label{eq:flux-first}\\
 \max_z a_{m+s}(z)&\le C_p\rho_n^s,\label{eq:a-smooth}\\
 \sum_{i=1}^{n-1}a_{2m+s}(i,i+1)&\le C_p\rho_n^s.\label{eq:adj-surv}
\end{align}
\end{lemma}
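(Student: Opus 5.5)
The plan is to exploit the loss-of-mass identity $\1-T_n\1=r$, which converts summed killing into an exact mass balance, and then to pass sup bounds through the smoothing of \cref{lem:smoothing}.

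\textbf{Proof of \eqref{eq:flux-first}.} Summing $a_k-a_{k+1}=T_n^k r$ over $k$ and pairing with a point mass gives, for every $z$,
\[
 \sum_{k\ge0}(T_n^k r)(z)=1-\lim_k a_k(z)=1,
\]
so the expected total killing started anywhere is at most one. Symmetry of $T_n$ gives $\sum_z (T_n^k r)(z)=\ip{a_k}{r}$. Hence $\sum_k \ip{a_k}{r}=|\cD|\approx n^2/2$; this is a total over all times, not yet the single time $m$.

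To isolate time $m$, I would use monotonicity. Since $a_{k+1}\le a_k$ entrywise (row sums are at most one and $T_n\ge0$), $\ip{a_k}{r}$ is nonincreasing in $k$. Therefore
\[
 m\,\ip{a_m}{r}\le\sum_{k<m}\ip{a_k}{r}\le\sum_{k\ge 0}\ip{a_k}{r}=|\cD|\le n^2.
\]
With $m=\lceil n^2\rceil$ this yields $\ip{a_m}{r}\le 1$. Because $r_i\ge k_p$, we get $\sum_i a_m(i,i+1)\le 1/k_p$, which is \eqref{eq:flux-first}.

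\textbf{Proof of \eqref{eq:a-smooth}.} Write $a_{m+s}(z)=\ip{T_n^m(z,\cdot)}{T_n^s\1}$ and apply Cauchy--Schwarz. \cref{lem:smoothing} gives $\norm{T_n^m(z,\cdot)}_2\le C_p/n$, and
\[
 \norm{T_n^s\1}_2\le\rho_n^s\norm{\1}_2\le\rho_n^s n.
\]
The product is at most $C_p\rho_n^s$.

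\textbf{Proof of \eqref{eq:adj-surv}.} Start from
\[
 \sum_i a_{2m+s}(i,i+1)=\ip{\1_{\rm adj}}{T_n^m a_{m+s}}=\ip{T_n^m\1_{\rm adj}}{a_{m+s}}\le\norm{a_{m+s}}_\infty\,\ip{T_n^m\1_{\rm adj}}{\1}.
\]
By symmetry, $\ip{T_n^m\1_{\rm adj}}{\1}=\sum_i a_m(i,i+1)\le C_p$ by \eqref{eq:flux-first}. Combining this with \eqref{eq:a-smooth} proves the claim.

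\textbf{Main obstacle.} The delicate step is \eqref{eq:flux-first}. A naive bound would give $\sum_i a_m(i,i+1)\le n$ or worse. The saving comes from dividing the total flux $|\cD|\sim n^2$ by the elapsed time $m\sim n^2$, which uses the monotonicity of $\ip{a_k}{r}$. I would check carefully that this monotonicity really follows from entrywise decrease of $a_k$ together with $r\ge0$.
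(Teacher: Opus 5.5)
Your proposal is correct and follows essentially the same route as the paper. The three steps match: the mass-balance telescoping with monotonicity of $\ip{a_t}{r}$ divided by $m\sim n^2$, then Cauchy--Schwarz with \cref{lem:smoothing} and $\norm{T_n}_{2\to2}=\rho_n$, then $T_n^m a_{m+s}\le\norm{a_{m+s}}_\infty a_m$. The differences are cosmetic. You sum the flux over all times using $a_k\to0$, whereas the paper telescopes only up to $m+1$; your bound needs only the partial sum, so the limit is not required. You also phrase the last step by duality rather than as a pointwise inequality.
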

\begin{proof}
Let $S_t=\ip\1{a_t}$. Symmetry yields
$S_t-S_{t+1}=\ip r{a_t}=\sum_i r_i a_t(i,i+1)=:b_t$.
Since $T_n\1\le\1$ and $T_n$ is nonnegative, $a_{t+1}\le a_t$ pointwise and $b_t$ decreases. Hence
\[
 (m+1)b_m\le\sum_{t=0}^mb_t=S_0-S_{m+1}\le|\cD|\le n^2/2.
\]
This gives \eqref{eq:flux-first}. By smoothing, symmetry, and the operator norm of $T_n$,
\[
 a_{m+s}(z)\le\norm{T_n^m(z,\cdot)}_2\norm{T_n^s\1}_2
 \le(C_p/n)\rho_n^s\sqrt{|\cD|}\le C_p\rho_n^s.
\]
Thus $a_{2m+s}=T_n^m a_{m+s}\le C_p\rho_n^s a_m$, and summation gives \eqref{eq:adj-surv}.
\end{proof}

\begin{corollary}[Aggregate adjacent discrepancy]\label{cor:adj-discrepancy}
With $\delta_i(t)=\norm{\mu_t-s_i\mu_t}_{\TV}$ and $m=\lceil n^2\rceil$,
\begin{equation}\label{eq:delta-sum}
 \sum_{i=1}^{n-1}\delta_i(2m+s)\le C_p\rho_n^s.
\end{equation}
Here $s_i\mu$ is the pushforward of $\mu$ by $\sigma\mapsto s_i\sigma$.
\end{corollary}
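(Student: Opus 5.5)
The plan is to realize $s_i\mu_t$ as the time-$t$ law of a chain started from a deck that differs from the identity by one transposition, and then apply the coupling of \cref{lem:single-coupling} together with \cref{lem:flux}.

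\emph{Step 1: rewrite the discrepancy.} Left multiplication by a fixed label permutation commutes with $P_n$, as noted in the ergodicity discussion. Hence
\[
 s_i\mu_t=\delta_{s_i}P_n^t=P_n^t(s_i,\cdot).
\]
This gives
\[
 \delta_i(t)=\norm{P_n^t(\id,\cdot)-P_n^t(s_i,\cdot)}_{\TV}.
\]

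\emph{Step 2: identify the discrepancy pair.} The decks $\id$ and $s_i$ differ by exchanging the labels $i,i+1$, which sit at positions $i,i+1$. So the discrepancy pair of positions is $z=(i,i+1)$. \Cref{lem:single-coupling} then yields
\[
 \delta_i(t)\le(T_n^t\1)(i,i+1)=a_t(i,i+1).
\]

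\emph{Step 3: sum and apply the flux bound.} Take $t=2m+s$ and sum over $i$. Then \eqref{eq:adj-surv} gives
\[
 \sum_{i=1}^{n-1}\delta_i(2m+s)\le\sum_{i=1}^{n-1}a_{2m+s}(i,i+1)\le C_p\rho_n^s,
\]
which is \eqref{eq:delta-sum}.

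The only point needing care is the convention check in Step 1. Here $s_i\sigma$ acts on labels, while the shuffle acts on the right by $\sigma\mapsto\sigma R_c$. So $s_i(\id\,R_{c_1}\cdots R_{c_t})=s_iR_{c_1}\cdots R_{c_t}$, and this is exactly the chain started at $s_i$. Beyond that, everything is a direct substitution into earlier results.
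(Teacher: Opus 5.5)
Your proposal is correct and matches the paper's proof essentially step for step. Relabeling invariance identifies $s_i\mu_t$ with the time-$t$ law started from $s_i$, the discrepancy with the identity deck sits at positions $(i,i+1)$, and \cref{lem:single-coupling} combined with summing \eqref{eq:adj-surv} over $i$ gives the bound.
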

\begin{proof}
Relabeling commutes with the chain, so $s_i\mu_t$ is its law started from $s_i$. The discrepancy initially occupies $(i,i+1)$ in the identity deck. Apply \cref{lem:single-coupling} and sum \eqref{eq:adj-surv}.
\end{proof}

\section{One-dimensional and multivariate hypergeometric inequalities}\label{sec:hypergeom}
\subsection{A scalar log-concavity lemma}
A probability mass function $p_j$ on an integer interval is \emph{log-concave} if $p_j^2\ge p_{j-1}p_{j+1}$. Equivalently, its successive positive mass ratios decrease.
\begin{lemma}[Separation of conditional means]\label{lem:mean-gap}
If $Z$ has a finite integer log-concave law, then across every threshold $t$ with positive mass on both sides,
\[
 \E[Z\mid Z>t]-\E[Z\mid Z\le t]\ge\sqrt{\Var Z}.
\]
\end{lemma}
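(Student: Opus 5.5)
The plan is to split $\Var Z$ across the threshold and bound each conditional variance by a one-sided comparison with a geometric law. Write $\alpha=\Pp(Z>t)\in(0,1)$, $m_+=\E[Z\mid Z>t]$, $m_-=\E[Z\mid Z\le t]$ and $g=m_+-m_-$. Measure the two conditional means from the half-integer point $t+\tfrac12$:
\[
 a=m_+-(t+\tfrac12),\qquad b=(t+\tfrac12)-m_-,\qquad a+b=g.
\]
Since $Z>t$ means $Z\ge t+1$ and $Z\le t$ is the other side, both $a$ and $b$ are at least $\tfrac12$. The law of total variance gives
\[
 \Var Z=\alpha(1-\alpha)g^2+\alpha\Var(Z\mid Z>t)+(1-\alpha)\Var(Z\mid Z\le t).
\]

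\textbf{One-sided bound.} The key auxiliary claim is: if $W$ is log-concave on $\{s,s+1,\ldots\}$, then
\[
 \Var W\le(\E W-s+\tfrac12)^2.
\]
A restriction of a log-concave law to an integer interval is again log-concave. So this claim, applied to $Z\mid Z>t$ with $s=t+1$ and to the reflection $-Z\mid Z\le t$ with $s=-t$, gives $\Var(Z\mid Z>t)\le a^2$ and $\Var(Z\mid Z\le t)\le b^2$. The geometric law shows the claim is essentially sharp: its variance is $q/p^2$, while the right side is $1/p^2-1/p+\tfrac14$.

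\textbf{Closing the estimate.} With these bounds,
\[
 \Var Z\le\alpha(1-\alpha)g^2+\alpha a^2+(1-\alpha)b^2.
\]
For fixed $g$, the map $a\mapsto \alpha a^2+(1-\alpha)(g-a)^2$ is convex on $[0,g]$. Its maximum is therefore at an endpoint, so it is at most $\max(\alpha,1-\alpha)g^2$. Since $\alpha\le 1-\alpha+\alpha^2$ is equivalent to $(1-\alpha)^2\ge0$, and symmetrically for $1-\alpha$, we get
\[
 \alpha(1-\alpha)+\max(\alpha,1-\alpha)\le1.
\]
Hence $\Var Z\le g^2$, which is the statement.

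\textbf{Proof of the one-sided bound (main step).} Shift so that $s=0$, and put $\mu=\E W$ and $S_k=\Pp(W\ge k)$, so $S_0=1$ and $\mu=\sum_{k\ge1}S_k$.

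First, the tail sequence $S_k$ of a log-concave mass function is itself log-concave. This is the standard discrete Prékopa fact, proved by decreasing ratios: $S_{k+1}/S_k$ is a weighted average of later mass ratios, and these decrease. Log-concavity of $S$ together with $S_0=1$ gives the new-better-than-used inequality
\[
 S_{a+l}\le S_aS_l\qquad(a,l\ge0).
\]
Summing over $l\ge1$ with $a=k-1$ gives
\[
 \sum_{j\ge k}S_j\le\mu S_{k-1}.
\]

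Next, use the tail-sum identity and sum this over $k\ge1$:
\[
 \tfrac12\E W(W+1)=\sum_{k\ge1}kS_k=\sum_{k\ge1}\sum_{j\ge k}S_j\le\mu\sum_{k\ge1}S_{k-1}=\mu(1+\mu).
\]
Therefore
\[
 \Var W=\E W(W+1)-\mu-\mu^2\le\mu(\mu+1)\le(\mu+\tfrac12)^2.
\]

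The step I expect to need the most care is the log-concavity of the tail sequence and the resulting inequality $S_{a+l}\le S_aS_l$. This includes handling supports that are finite intervals, where mass ratios are only defined while the masses are positive. Everything else is bookkeeping.
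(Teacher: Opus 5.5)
Your proposal is correct and follows the paper's proof essentially step for step: decreasing survival ratios give $S_{i+j}\le S_iS_j$, hence $\Var W\le\mu(\mu+1)$ for each one-sided conditional law, and the total-variance decomposition is then closed by convexity in how the gap is split between the two sides. The only difference is cosmetic. You center at $t+\tfrac12$ and loosen $\mu(\mu+1)$ to $(\mu+\tfrac12)^2$, so your final optimization becomes the homogeneous inequality $\alpha(1-\alpha)+\max(\alpha,1-\alpha)\le1$. The paper instead verifies $\alpha(2-\alpha)\Delta^2-\alpha\Delta\le\Delta^2$ with $\Delta=a+b+1$.
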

\begin{proof}
First let $W\ge0$ be integer log-concave and write $S_j=\Pp(W\ge j)$. On its positive support,
\[
 \frac{\Pp(W=j)}{S_j}
 =\left(1+\sum_{h\ge1}\prod_{\ell=0}^{h-1}
 \frac{p_{j+\ell+1}}{p_{j+\ell}}\right)^{-1}
\]
is increasing, because every successive product of mass ratios decreases when the starting index increases. Products stop at the upper end of the support, and terms beyond it are zero. Thus $S_{j+1}/S_j=1-p_j/S_j$ decreases; initial zeros in the mass function give survival ratios equal to one and cause no problem. Since $S_0=1$, $S_{i+j}\le S_iS_j$. If $m=\E W=\sum_{j\ge1}S_j$, then
\[
 \frac12\E[W(W-1)]=\sum_{i,j\ge1}S_{i+j}\le m^2,
 \qquad\Var W\le m(m+1).
\]
Let $\alpha=\Pp(Z\le t)$, $a=t-\E[Z\mid Z\le t]$, and $b=\E[Z\mid Z>t]-t-1$. Both are nonnegative. Reflection and translation of the two conditional laws preserve log-concavity, so their variances are at most $a(a+1)$ and $b(b+1)$. With $\Delta=a+b+1$, total variance is at most
\[
 \alpha a(a+1)+(1-\alpha)b(b+1)+\alpha(1-\alpha)\Delta^2.
\]
For fixed $a+b=\Delta-1$ this is convex in $a$ and therefore maximized at an endpoint. At $b=0$ it is $\alpha(2-\alpha)\Delta^2-\alpha\Delta\le\Delta^2$; the other endpoint follows by exchanging $\alpha$ and $1-\alpha$. This proves the result.
\end{proof}

For positive integers $A,B$ and $0\le s\le A+B$, consider
\[
 \pi_x=\frac{\binom Ax\binom B{s-x}}{\binom{A+B}s},\qquad
 \max(0,s-B)\le x\le\min(A,s).
\]
Use rates $u_x=(A-x)(s-x)$ and $d_x=x(B-s+x)$. A rate leading outside the support is zero.
\begin{lemma}[Scalar exchange inequality]\label{lem:scalar-exchange}
If the support is nontrivial and $v=\Var_\pi X$, then
\begin{equation}\label{eq:scalar-exchange}
 \norm{h-\E_\pi h}_1\le
 \frac1{(A+B)\sqrt v}\E_\pi\left[
 u_X|h(X+1)-h(X)|+d_X|h(X-1)-h(X)|\right].
\end{equation}
If $A,B\ge K$ and $(A+B)/6\le s\le5(A+B)/6$, its coefficient is at most $CK^{-3/2}$.
\end{lemma}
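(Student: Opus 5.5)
The plan is to prove \eqref{eq:scalar-exchange} directly on the line by telescoping. The point is that the weights appearing after telescoping are $2F(1-F)$. These weights are then matched to the exchange flux through an exact drift identity, and the gap to be closed is supplied by \cref{lem:mean-gap}.

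\textbf{Step 1 (telescoping).} Let $X'$ be an independent copy of $X$ and put $F(x)=\Pp(X\le x)$. Jensen gives
\[
\norm{h-\E_\pi h}_1\le\E|h(X)-h(X')|.
\]
Telescoping $h(X)-h(X')$ along the unit edges $\{x,x+1\}$ between $X$ and $X'$ bounds this by
\[
\sum_x|h(x+1)-h(x)|\,\Pp(\text{edge $\{x,x+1\}$ lies between }X,X')=\sum_x 2F(x)(1-F(x))\,|h(x+1)-h(x)|.
\]
On the right of \eqref{eq:scalar-exchange}, the edge $\{x,x+1\}$ carries the weight $\pi_xu_x+\pi_{x+1}d_{x+1}$. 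Detailed balance holds because
\[
\frac{\pi_{x+1}}{\pi_x}=\frac{(A-x)(s-x)}{(x+1)(B-s+x+1)}=\frac{u_x}{d_{x+1}}.
\]
So this weight equals $2\pi_xu_x$. It therefore suffices to prove, for every interior threshold $x$,
\[
F(x)(1-F(x))\le\frac{\pi_xu_x}{(A+B)\sqrt v}.
\]

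\textbf{Step 2 (flux identity).} Compute $u_y-d_y=As-(A+B)y=(A+B)(\mu-y)$, where $\mu=\E_\pi X=As/(A+B)$. Summing the stationarity/detailed-balance relation over $y\le x$ leaves only the boundary flux:
\[
\E_\pi[(u_X-d_X)\1_{\{X\le x\}}]=\pi_xu_x.
\]
Indeed, the internal terms $\pi_yu_y-\pi_{y+1}d_{y+1}$ cancel, and $d$ vanishes at the lower end of the support. Writing $\alpha=F(x)$, this gives
\[
\pi_xu_x=(A+B)\,\alpha\,(\mu-\E[X\mid X\le x])=(A+B)\,\alpha(1-\alpha)\,\bigl(\E[X\mid X>x]-\E[X\mid X\le x]\bigr).
\]

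\textbf{Step 3 (log-concavity and the mean gap).} The ratio $u_x/d_{x+1}$ is a product of factors decreasing in $x$, so the hypergeometric law is log-concave. \Cref{lem:mean-gap} then bounds the conditional-mean gap below by $\sqrt v$. Hence $\pi_xu_x\ge(A+B)\alpha(1-\alpha)\sqrt v$, which is exactly what Step 1 requires, and \eqref{eq:scalar-exchange} follows.

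\textbf{Step 4 (the coefficient estimate).} Use the exact variance
\[
v=\frac{s(A+B-s)}{(A+B)(A+B-1)}\cdot\frac{AB}{A+B}.
\]
Under the stated range of $s$, the first factor is at least an absolute constant. The second factor is at least $\min(A,B)/2\ge K/2$. Thus $v\ge cK$, and with $A+B\ge2K$ the coefficient $1/((A+B)\sqrt v)$ is at most $CK^{-3/2}$.

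The only delicate point is Step 2, specifically confirming that the sign and endpoint conventions make the boundary flux equal to exactly $\pi_xu_x$. Once the linearity $u-d=(A+B)(\mu-y)$ is observed, everything else is routine.
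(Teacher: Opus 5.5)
Your proposal is correct and follows the paper's proof essentially step for step. Both arguments use the Jensen-plus-telescoping bound with weights $2\alpha_t(1-\alpha_t)$, the detailed-balance reduction of the exchange boundary to $2\sum_t\pi_tu_t|h(t+1)-h(t)|$, the flux identity $\pi_tu_t=(A+B)\alpha_t(1-\alpha_t)(m_+(t)-m_-(t))$ obtained from the linear drift $u_x-d_x=-(A+B)(x-\mu)$, then \cref{lem:mean-gap} and the exact hypergeometric variance. You also check explicitly that $d$ vanishes at the lower end of the support and that non-interior thresholds carry zero weight, details the paper leaves implicit.
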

\begin{proof}
The ratio $\pi_{x+1}/\pi_x=(A-x)(s-x)/[(x+1)(B-s+x+1)]$ decreases, so the law is log-concave, and it also proves $\pi_xu_x=\pi_{x+1}d_{x+1}$. The mean is $\mu=As/(A+B)$, and $u_x-d_x=-(A+B)(x-\mu)$. Summing this stationary drift over $x\le t$ cancels interior flows and gives
\[
 \pi_tu_t=(A+B)\alpha_t(1-\alpha_t)(m_+(t)-m_-(t)),
\]
where $\alpha_t=\Pp(X\le t)$ and $m_\pm$ are the two conditional means. For independent copies $X,Y$, Jensen and telescoping give
\[
 \E|h(X)-\E h(X)|\le\E|h(X)-h(Y)|
 \le2\sum_t\alpha_t(1-\alpha_t)|h(t+1)-h(t)|.
\]
The directed boundary in \eqref{eq:scalar-exchange} is $2\sum_t\pi_tu_t|h(t+1)-h(t)|$. Apply \cref{lem:mean-gap} to obtain \eqref{eq:scalar-exchange}.
Sampling indicators without replacement gives
\[
 v=\frac{s(A+B-s)AB}{(A+B)^2(A+B-1)}.
\]
For the indicated central range of $s$, $v\ge cAB/(A+B)\ge cK$. Since $A+B\ge2K$, the last assertion follows.
\end{proof}

\subsection{Several count coordinates}
Let $K_1,\ldots,K_r\ge1$, $N=\sum_jK_j$, and $N/3\le m\le2N/3$. The count space and its law are
\[
 \mathcal X=\{x\in\mathbb Z^r:0\le x_j\le K_j,\ \sum_jx_j=m\},
 \qquad\pi(x)=\frac{\prod_j\binom{K_j}{x_j}}{\binom Nm}.
\]
Here $x$ counts selected slots of each type in a uniformly chosen $m$-subset of $N$ slots. Set $K=\min_jK_j$. Let $e_j$ denote the $j$th standard coordinate vector, and define the directed boundary
\[
 \cE_{\rm ex}(F)=\sum_{i\ne j}\E_\pi\left[
 X_i(K_j-X_j)|F(X-e_i+e_j)-F(X)|\right].
\]
Terms of rate zero are zero; $F$ need not be defined outside $\mathcal X$.
\begin{lemma}[Multivariate exchange inequality]\label{lem:hypergeom}
For universal constants $C,C_0$, if $K\ge C_0\log(r+1)$, then
\begin{equation}\label{eq:hypergeom}
 \norm{F-\E_\pi F}_1\le CK^{-3/2}\cE_{\rm ex}(F).
\end{equation}
The case $r=1$ is a one-point space and is understood separately.
\end{lemma}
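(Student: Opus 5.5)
The plan is to reduce \eqref{eq:hypergeom} to \cref{lem:scalar-exchange} along a balanced binary splitting tree of the type set $\{1,\ldots,r\}$, arranged so that every exchange pair $(i,j)$ is charged at exactly one node. That way the depth of the tree does not multiply the constant.

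\textbf{The tree and the telescoping.} Build a rooted binary tree whose leaves are the types. Each internal node $\nu$ carries a type set $S_\nu\cup T_\nu$ split into two children whose masses $K_{S}=\sum_{j\in S_\nu}K_j$ and $K_T$ are comparable up to a factor $3$ (or, if that is impossible, one child is a single type). There are at most $2r$ nodes. Order the nodes breadth-first and let $\mathcal G_\ell$ be the $\sigma$-field generated by the counts $X_{S_\nu}=\sum_{j\in S_\nu}X_j$ of the first $\ell$ nodes. Telescoping gives
\[
 \norm{F-\E F}_1\le\sum_\nu\E\bigl|\E[F\mid\mathcal G_\nu]-\E[F\mid\mathcal G_{\nu^-}]\bigr|.
\]
Conditional on $\mathcal G_{\nu^-}$, the node total $s=X_{S_\nu\cup T_\nu}$ is fixed, and $X_{S_\nu}$ is scalar hypergeometric with $A=K_S$, $B=K_T$ and sample size $s$.

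\textbf{The key Jensen step.} Apply \cref{lem:scalar-exchange} to $h(x)=\E[F\mid\mathcal G_{\nu^-},X_{S_\nu}=x]$. I claim that the conditional law of the finer composition given $X_S=x+1$ is the pushforward of the law given $x$ under the following random slot exchange: pick a selected slot of type $i\in T$ and an unselected slot of type $j\in S$ uniformly, and swap their selection status. This holds because a uniform $m$-subset is invariant under such swaps. In count coordinates the swap moves $X\mapsto X-e_i+e_j$ with weight $X_i(K_j-X_j)$, and these weights sum to $X_T(K_S-X_S)=u_x$. Jensen then bounds $u_x|h(x+1)-h(x)|$ by the conditional expectation of $\sum_{i\in T,j\in S}X_i(K_j-X_j)|F(X-e_i+e_j)-F(X)|$, and the reverse direction is handled the same way. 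So node $\nu$ contributes
\[
 \frac{1}{(A+B)\sqrt{v}}\,\E\Bigl[\textstyle\sum_{i,j\text{ separated at }\nu}X_i(K_j-X_j)|\Delta F|\Bigr].
\]
Since each ordered pair $i\neq j$ is separated at exactly one node, summing over nodes yields $\sup_\nu\bigl((A+B)\sqrt v\bigr)^{-1}\cE_{\rm ex}(F)$.

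\textbf{Typical node totals and the main obstacle.} If the node total $s$ lies in the central range $[(A+B)/6,\,5(A+B)/6]$, then \cref{lem:scalar-exchange} gives coefficient $CK^{-3/2}$, since $A,B\ge K$; in the single-type-child case, $\min(A,B)\ge K$ still holds. Under $\pi$, the total $s$ is hypergeometric with mean $m(A+B)/N\in[(A+B)/3,\,2(A+B)/3]$. By \cref{lem:concentration}, $s$ leaves the central range with probability at most $e^{-c(A+B)}\le e^{-cK}$. A union bound over the at most $2r$ nodes, using $K\ge C_0\log(r+1)$, makes the atypical event $\mathcal A$ have probability at most $e^{-cK/2}$.

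The step I expect to be hardest is absorbing the contribution of $\mathcal A$: there $v$ can be tiny, and the scalar coefficient degenerates. My first attempt will be to avoid applying the scalar lemma on $\mathcal A$ at all. Instead I will replace $h$ on atypical values of $s$ by a constant extension, namely the value at the nearest central total. The difference is then controlled through the parent node's exchange inequality, which is itself applied only at central totals, along a monotone path of exchanges from atypical to central $s$. The path has length at most $A+B$, and along it the weights $u_x$ stay at least of order $s(A+B)$. Combined with the tail bound of \cref{lem:concentration}, the path cost is at most $(A+B)e^{-c(A+B)}\le CK^{-3/2}$ times the relevant boundary once $C_0$ is large. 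If this routing proves awkward, the fallback is a coarea reduction (\cref{lem:coarea}) to indicators $F=\1_E$, so that the mass of $E$ on $\mathcal A$ is at most $e^{-cK/2}$ and can be absorbed against $\pi(E)(1-\pi(E))$ whenever $\pi(E)\gg e^{-cK/2}$. The remaining tiny sets would then be treated by the crude bound obtained from \cref{lem:scalar-exchange} without the central-range assumption.
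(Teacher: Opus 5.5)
\textbf{The bulk estimate is sound and differs from the paper.} Your tree martingale is a valid alternative to the paper's route. The paper proves a pair heat-bath Poincar\'e inequality by a unit-transfer coupling and then applies \cref{lem:scalar-exchange} to every pair $\{i,j\}$. Your slot-exchange pushforward instead bounds $u_x|h(x+1)-h(x)|$ by single exchange differences and charges each ordered pair once, at its lowest common ancestor. The balancing of the tree is unnecessary, since $A,B\ge K$ alone gives the central coefficient $CK^{-3/2}$. Because $s_\nu$ is $\mathcal G_{\nu^-}$-measurable, for an indicator this yields $2\alpha(1-\alpha)\le CK^{-3/2}\cE_{\rm ex}(\1_E)+Cre^{-cK}$, the analogue of \eqref{eq:hyper-error}. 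Absorption then handles $\alpha\ge 2Cre^{-cK}$.

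\textbf{The gap is the remaining tiny sets.} Your fallback applies \cref{lem:scalar-exchange} without the central range, but its coefficient
\[
 \frac1{(A+B)\sqrt v}=\sqrt{\frac{A+B-1}{s(A+B-s)AB}}
\]
equals $(AB)^{-1/2}$ at $s=1$. When $A=B=K$ this is $K^{-1}$, a factor $\sqrt K$ too weak. A bound of the form $2\alpha(1-\alpha)\le C'\cE_{\rm ex}(\1_E)$ gains nothing from $\alpha$ being small, so no such crude estimate can recover that factor. Your first attempt does not repair this either. The path changes $s$ through parent-level exchanges, whose rates are not the node rates $u_x$ you invoke. Moreover, $(A+B)e^{-c(A+B)}$ is a length times a probability. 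An $L^1$ path comparison instead needs a congestion ratio: the flow through an edge divided by $\pi$ times its rate.

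\textbf{What is missing is an isoperimetric bound with a logarithmic gain for small sets.} This is how the paper finishes. Lift $\1_E$ to the binary multislice of selected and unselected slots. Both classes have size at least $N/3$. The complete-transposition boundary of the lift is exactly $\cE_{\rm ex}(\1_E)$, because exactly $X_i(K_j-X_j)$ selected--unselected swaps realize the move $X\mapsto X-e_i+e_j$. Then \eqref{eq:smallset-RT}, which rests on Salez's log-Sobolev inequality, gives $\alpha\log(1/\alpha)\le CN^{-1}\cE_{\rm ex}(\1_E)$. When $\alpha\lesssim re^{-cK}$, the hypothesis $K\ge C_0\log(r+1)$ gives $\log(1/\alpha)\ge c'K$. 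Hence $\alpha\le C(NK)^{-1}\cE_{\rm ex}(\1_E)\le CK^{-3/2}\cE_{\rm ex}(\1_E)$, and coarea yields \eqref{eq:hypergeom}.
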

\begin{proof}
We first prove a conditional-variance inequality for pair resampling, then convert it to an all-sets boundary estimate.

\paragraph{Pair heat-bath gap.}
For every unordered pair $\{i,j\}$, at rate one replace $(X_i,X_j)$ by its conditional law given all other coordinates. Write $Q_{ij}F=\E[F\mid X_\ell,\ell\notin\{i,j\}]$ for this conditional-averaging operator. The generator is $\mathcal L_{\rm hb}=\sum_{i<j}(Q_{ij}-I)$. It is reversible, and its Dirichlet form is
$\sum_{i<j}\E\Var(F\mid X_\ell,\ell\notin\{i,j\})$.
Use unit-transfer distance $d(x,y)=\frac12\sum_j|x_j-y_j|$, the shortest number of valid unit transfers connecting the states. Such a path exists by transferring units from coordinates where $x$ exceeds $y$ to those where it is smaller; capacities remain respected.

For neighboring states with discrepancy $e_i-e_j$, update the same coordinate pair. Updating $\{i,j\}$ has the same conditional total in the two copies and couples identically, eliminating the discrepancy. If exactly one discrepancy coordinate is updated, the two conditional totals differ by one. In a common uniform random ordering of the available $K_a+K_b$ slots, select the first $s$ versus the first $s+1$ slots. The conditional samples have the right laws and differ in one selected slot, so the resulting states remain at unit-transfer distance. Other updates preserve the discrepancy. Thus distance stays at most one and its survival probability is at most $e^{-t}$, since at every instant its current coordinate pair rings at rate one. Gluing neighboring couplings along a shortest path gives contraction by $e^{-t}$ for arbitrary starts.

For a nonconstant generator eigenfunction $F$ with eigenvalue $-\gamma$, its finite Lipschitz seminorm $\max_{x\ne y}|F(x)-F(y)|/d(x,y)$ is positive. The coupling implies $e^{-\gamma t}\operatorname{Lip}(F)\le e^{-t}\operatorname{Lip}(F)$, hence $\gamma\ge1$. Irreducibility follows because conditional pair resampling permits every valid unit transfer with positive probability. Diagonalizing the generator, which is self-adjoint in $L^2(\pi)$ by reversibility, therefore proves
\begin{equation}\label{eq:HB-poincare}
 \Var_\pi F\le\sum_{i<j}\E\Var(F\mid X_\ell,\ell\notin\{i,j\}).
\end{equation}

\paragraph{Conditional scalar bounds.}
For a pair $i,j$, put $A=K_i$, $B=K_j$, and $s=X_i+X_j$. Conditional on all other coordinates, $X_i$ has the scalar law above. The marginal $s$ is hypergeometric with sample size $A+B$ from $N$ slots of which $m$ are selected. Since $m/N\in[1/3,2/3]$, \cref{lem:concentration} applied also to unselected slots shows that
\[
 \Pp\{s\notin[(A+B)/6,5(A+B)/6]\}\le Ce^{-c(A+B)}\le Ce^{-cK}.
\]
For an indicator $F=\1_E$, apply \eqref{eq:HB-poincare}, multiply by two, and use \cref{lem:scalar-exchange} on good conditionings. On the others, $2\Var(F\mid\cdot)\le1/2$. Summation over pairs gives, with $\alpha=\pi(E)$,
\begin{equation}\label{eq:hyper-error}
 2\alpha(1-\alpha)\le CK^{-3/2}\cE_{\rm ex}(\1_E)+Cr^2e^{-cK}.
\end{equation}

\paragraph{All rare sets.}
Complement $E$ so $\alpha\le1/2$. Denote the last error by $\eta_0$. If $\alpha\ge2\eta_0$, it is at most half of $2\alpha(1-\alpha)$ and can be absorbed. Otherwise
$\log(1/\alpha)\ge cK-2\log r-O(1)\ge c'K$
when $C_0$ is sufficiently large; its size also handles the fixed constants for $r\ge2$. Lift $\1_E$ to the binary multislice of selected and unselected slots. Its smallest class is at least $N/3$. Its complete-transposition boundary is exactly $\cE_{\rm ex}(\1_E)$, since a swap of a selected slot of type $i$ with an unselected slot of type $j$ has $X_i(K_j-X_j)$ realizations; swaps with no count change contribute zero. Equation \eqref{eq:smallset-RT} yields
\[
 2\alpha(1-\alpha)\le2\alpha
 \le\frac C{NK}\cE_{\rm ex}(\1_E)
 \le CK^{-3/2}\cE_{\rm ex}(\1_E).
\]
In the last inequality $N\ge K\ge1$. Both cases prove the indicator estimate, and \cref{lem:coarea} proves \eqref{eq:hypergeom} for every $F$.
\end{proof}

\section{An all-functions colored-word inequality}\label{sec:words}
\begin{proposition}[Multislice path inequality]\label{prop:multislice-path}
There is a universal $C$ such that, for every nontrivial uniform multislice of length $N$ and minimum positive class $k$,
\begin{equation}\label{eq:multislice-path}
 \norm{F-\E F}_1\le C\frac{N^2}{k^{3/2}}B_{\rm path}(F).
\end{equation}
\end{proposition}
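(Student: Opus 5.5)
The plan is to induct on $N$ by cutting the word at its middle position $m=\lfloor N/2\rfloor$ and conditioning on the coarse count vector $X=(X_1,\ldots,X_r)$, where $X_j$ is the number of copies of color $j$ in the left interval $\{1,\ldots,m\}$. By \cref{lem:coarea} it suffices to treat indicators $F=\1_E$, and we complement so that $\alpha=\Pp(E)\le1/2$. Two regimes are handled directly. If $k^{3/2}\le N$, then \eqref{eq:crude-path} already gives constant $CN\le CN^2k^{-3/2}$. If $k\le C_0\log(r+1)$, then, since $r\le N/k$, we have $k\le C\log N$, and we are again in the first regime for large $N$; small $N$ are absorbed into the constant. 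So we may assume $k\ge N^{2/3}$. Then $r\le N^{1/3}$ and $k\ge C_0\log(r+1)$, and we aim to prove the bound with a constant $A$ by strong induction on $N$.

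\textbf{Decomposition.} Write
\[
 F-\E F=\bigl(F-\E[F\mid X]\bigr)+\bigl(G(X)-\E G\bigr),\qquad G=\E[F\mid X].
\]
Conditional on $X=x$, the word is a product of two independent uniform multislices of lengths $m$ and $N-m$, with compositions $x$ and $K-x$. For the \emph{coarse term} $G$, apply \cref{lem:hypergeom} with $K_j$ the class sizes (all $\ge k$) and $m/N\in[1/3,2/3]$. This gives $\norm{G-\E G}_1\le Ck^{-3/2}\cE_{\rm ex}(G)$. Next we identify $\cE_{\rm ex}(G)$: the rate $X_i(K_j-X_j)$ counts exactly the straddling transpositions $\omega^{ab}$ with $a\le m<b$ that move a type-$i$ letter right and a type-$j$ letter left. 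Hence $\cE_{\rm ex}(G)=\sum_{a\le m<b}\E|\widetilde F(\omega^{ab})-\widetilde F(\omega)|$, where $\widetilde F$ averages $F$ within the two intervals (this is $G$ lifted). By Jensen together with the routing bound used for \eqref{eq:middle-averaging}, this sum is at most $2m(N-m)B_{\rm path}(F)\le N^2B_{\rm path}(F)/2$. So the coarse term contributes $C_1N^2k^{-3/2}B_{\rm path}(F)$, which is exactly the target order.

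\textbf{Fiber term and induction.} Call $x$ \emph{good} if every color has at least $k/3$ copies in each half. By \cref{lem:concentration} and a union bound, $\Pp(X\text{ bad})\le re^{-ck}\le N^{1/3}e^{-cN^{2/3}}$. On good fibers, the product inequality of \cref{lem:products} together with the induction hypothesis gives conditional constant
\[
 A\max_{\ell\in\{m,N-m\}}\frac{\ell^2}{(k/3)^{3/2}}.
\]
This is too lossy as written, since $(1/2)^2\cdot3^{3/2}>1$. I will therefore sharpen goodness to ``at least $(1/2-\epsilon)$ of each class in each half'' with $\epsilon$ small and fixed. The bad probability is still $\le re^{-c_\epsilon k}$, and the conditional constant becomes $A\,2^{-2}(1/2-\epsilon)^{-3/2}N^2k^{-3/2}\le\theta AN^2k^{-3/2}$, where $\theta=2^{-1/2}(1+O(\epsilon))<1$. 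The path boundary of the two halves omits only the middle edge $m$, so it is at most $B_{\rm path}(F)$. Summing the coarse and good-fiber contributions gives
\[
 (\theta A+C_1)N^2k^{-3/2}B_{\rm path}(F),
\]
and the induction closes once $A\ge C_1/(1-\theta)$.

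\textbf{Main obstacle: bad fibers.} The crude bound on bad fibers would cost $CN$, which exceeds $N^2k^{-3/2}$ in this regime, so I intend to use rare-set absorption, as in the proof of \cref{lem:hypergeom}. For the indicator, the fiber contribution on bad $x$ is at most $2\Pp(X\text{ bad})\le2\eta_0$ with $\eta_0=re^{-ck}$. If $\alpha\ge C\eta_0$, this term is at most a small multiple of $2\alpha(1-\alpha)$ and is absorbed into the left side, after working with $2\alpha(1-\alpha)$ rather than the $L^1$ norm of the decomposition. If instead $\alpha<C\eta_0$, then $\log(1/\alpha)\ge ck-\log r\ge c'k$. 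In that case \eqref{eq:smallset-path} gives
\[
 \alpha\le\frac{CN\log(N/k)}{k}B_{\rm path}(\1_E)\le CN^{4/3}\log N\,B_{\rm path}(\1_E),
\]
and $N^{4/3}\log N\le N^2k^{-3/2}$ fails only when $k$ is near $N$. I expect the delicate point to be calibrating these two cases so that the constants in the absorption stay uniform across the induction. If the direct small-set bound is too weak near $k\sim N$, I would instead route the bad-fiber mass through \eqref{eq:smallset-RT} followed by \eqref{eq:RT-route}, tracking $\log(N/k)=O(1)$ there.
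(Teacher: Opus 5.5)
Your proposal follows the paper's proof essentially step by step: the middle split and conditioning on the first-half count vector; \cref{lem:hypergeom}, the exact identification of $\cE_{\rm ex}$ with the straddling-swap boundary, and \eqref{eq:middle-averaging} for the coarse term; induction on good fibers with a goodness threshold near $k/2$ so that $\theta<1$; absorption of the bad-fiber error when $\alpha$ is a fixed multiple of $\eta$; and \eqref{eq:smallset-path} for rarer sets. The paper uses $0.45k_j$ and $\theta=0.9$, for exactly the reason you noticed, namely that $k/3$ is too lossy.

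The only step that does not close as written is the last one. You correctly obtain $\alpha\le CN\log(N/k)k^{-1}B_{\rm path}(\1_E)$, but then weaken it to $CN^{4/3}\log N\,B_{\rm path}(\1_E)$. In your regime $k>N^{2/3}$ one has $N^2k^{-3/2}<N$, so the weakened bound exceeds the target for every such $k$, not only near $k\sim N$. Compare the unweakened bound directly instead. With $x=k/N\in(0,1]$,
\[
 \frac{N\log(N/k)}{k}\Big/\frac{N^2}{k^{3/2}}
 =\frac{\sqrt{x}\,\log(1/x)}{\sqrt N}\le\frac{2}{e\sqrt N},
\]
since $\sqrt{x}\log(1/x)$ is maximized at $x=e^{-2}$. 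Thus the rare-set case costs less than the target, uniformly in $k$, and no fallback is needed. Your proposed fallback through \eqref{eq:smallset-RT} and \eqref{eq:RT-route} would reproduce \eqref{eq:smallset-path} verbatim anyway, since that is how it is derived.

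The absorption calibration is not delicate either. $\theta$ and the absorption threshold $\alpha\ge c\,\eta_0$ are fixed independently of $A$. Taking $c$ with $\theta<1-2/c$ leaves a strict margin, and then $A$ is chosen large. The paper does this with threshold $10\eta$ and $K_\star\ge40C_1$.
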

\begin{proof}
We prove the assertion for indicators by strong induction on $N$, with one uniform constant $K_\star$, then apply coarea. Put $A_{N,k}=N^2/k^{3/2}$. If $k\le N^{2/3}$, \eqref{eq:crude-path} implies the result with a universal constant, since $A_{N,k}\ge N$. Single-point factors have zero boundary and zero oscillation. Finitely many remaining small $N$ can be covered by increasing $K_\star$.

Suppose $k>N^{2/3}$ and $N$ is sufficiently large. Split the word at $m=\lfloor N/2\rfloor$, and let $X_j$ count color $j$ in the first part. Conditional on $X$, the two subwords are independent uniform multislices. By \cref{lem:concentration}, except on an event of probability
\begin{equation}\label{eq:word-bad}
 \eta\le2r e^{-ck},
\end{equation}
both subwords contain at least $0.45k_j$ copies of each color. For large $N$, each good factor's inductive scale is at most $\theta A_{N,k}$ with $\theta=0.9$, because
\[
 \frac{\lceil N/2\rceil^2}{(0.45k)^{3/2}}
 \bigg/\frac{N^2}{k^{3/2}}\longrightarrow
 \frac1{4(0.45)^{3/2}}<0.83.
\]

Let $f=\1_E$, $\alpha=\Pp(E)\le1/2$ after complementing, $v=2\alpha(1-\alpha)$, and $B=B_{\rm path}(f)$. For $h(X)=\E[f\mid X]$, the decomposition
$\norm{f-\E f}_1\le\E\norm{f-h(X)}_{1\mid X}+\norm{h-\E h}_1$
has conditional fluctuation at most $\theta K_\star A_{N,k}B$ on good compositions, by product tensorization and induction. On bad compositions it is at most $\eta/2$, because the centered $L^1$ norm of any indicator is at most $1/2$. The within-part boundaries are subsets of the full path boundary and preserve the conditioning $X$.

The law of $X$ is that in \cref{lem:hypergeom} with capacities $k_j$. Its hypotheses hold for all large $N$: $k>N^{2/3}$ and $r\le N/k$ imply $k\ge C_0\log(r+1)$. Regard $\widetilde f=h(X)$ as the within-part group average of $f$. The middle physical swap exchanges a first-part color $i$ with a second-part color $j$ with conditional probability
\[
 \frac{X_i(k_j-X_j)}{m(N-m)}.
\]
Therefore $\cE_{\rm ex}(h)=m(N-m)B_{\rm middle}(\widetilde f)$ exactly. Equations \eqref{eq:hypergeom} and \eqref{eq:middle-averaging} give
$\norm{h-\E h}_1\le C_1 A_{N,k}B$,
where $C_1$ is independent of $K_\star,N,k$. Altogether,
\begin{equation}\label{eq:word-induction-error}
 v\le(0.9K_\star+C_1)A_{N,k}B+\eta/2.
\end{equation}

If $\alpha\ge10\eta$, then $v\ge\alpha$ and $\eta/2\le v/20$. Absorption yields
\[
 v\le\frac{0.9K_\star+C_1}{0.95}A_{N,k}B\le K_\star A_{N,k}B
\]
when $K_\star\ge 40C_1$. This reserves a strict margin; an arbitrary factor-two absorption would not suffice.
If $\alpha<10\eta$, equation \eqref{eq:word-bad} and $k>N^{2/3}$ give $\log(1/\alpha)\ge c'k$ for all large $N$. Equation \eqref{eq:smallset-path} instead yields
\[
 v\le C\frac{N\log(N/k)}k B.
\]
Its coefficient divided by $A_{N,k}$ is
\[
 \frac{\sqrt{k}\log(N/k)}N
 =\frac{\sqrt{x}\log(1/x)}{\sqrt N}\le\frac2{e\sqrt N},\qquad x=k/N.
\]
The last inequality follows by differentiating $\sqrt{x}\log(1/x)$ on $(0,1]$. Thus one universal $K_\star$ handles these sets as well. Choose it once to exceed the preceding constants and the finite-base constants. The strong induction then closes without a level-dependent factor. Coarea gives the statement for every real $F$.
\end{proof}

\section{Recursive two-sided isoperimetry}\label{sec:static}
Define $C_1^\star=0$. For $n\ge2$, let $C_n^\star$ be the optimal constant in
$\norm{f-\E_{U_n}f}_1\le C_n^\star\cB_n(f)$, and for real $x\ge1$ set
\[
 A(x)=\max_{1\le m\le\lfloor x\rfloor}C_m^\star.
\]
The constants are finite by \eqref{eq:crude-path}; for example $C_n^\star\le Cn$. We will prove a recurrence with a strictly smaller argument of $A$.

\subsection{Block averaging and the coarse table}
Partition both $[n]$ label indices and position indices into the same $R$ consecutive intervals $I_1,\ldots,I_R$ of lengths $L_a$ differing by at most one. Let $H=\prod_a\mathfrak S_{I_a}$ and
\[
 \begin{aligned}
 M_{ab}(\sigma)&=\#\{i\in I_b:\sigma(i)\in I_a\},\qquad Q=Q_LQ_R,\\
 Q_Lf(\sigma)&=\E_{h\in H}f(h\sigma),\qquad
 Q_Rf(\sigma)=\E_{h\in H}f(\sigma h).
 \end{aligned}
\]
Both averages use the uniform law of $H$ and commute. For different label and position partitions, each average uses its own corresponding block subgroup. Two permutations have the same $M$ precisely when they are in the same orbit under these left and right actions: within each label block first match the labels assigned to each position block, and then reorder inside each position block. Averaging the finite transitive group action is uniform on its orbit. Therefore $Qf=\E[f\mid M]$ under $U_n$.
The table's law, obtained by counting assignments and then arrangements, is
\begin{equation}\label{eq:table-law}
 \pi(M)=\frac{\prod_aL_a!\prod_bL_b!}{n!\prod_{a,b}M_{ab}!},
\end{equation}
on nonnegative integer matrices with those row and column margins. This is the Fisher--Yates table law; its connection with permutation double cosets is discussed in \cite{Simper2024}.

\begin{lemma}[Stable block averaging]\label{lem:block-average}
For any consecutive label and position partitions (not necessarily the same),
\begin{equation}\label{eq:stable-average}
 B_L(Qf)\le4B_L(f),\qquad B_R(Qf)\le4B_R(f).
\end{equation}
If all block sizes are at most $L$, then also
\begin{equation}\label{eq:block-erasure}
 \norm{f-Qf}_1\le CL\cB_n(f).
\end{equation}
\end{lemma}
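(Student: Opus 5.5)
The plan is to get \eqref{eq:stable-average} from Jensen's inequality plus a conjugation identity, and \eqref{eq:block-erasure} from the crude path inequality on each block. In this proof $Q_L$, $Q_R$ use their own block subgroups $H_L$, $H_R$. Since $Q_L$ acts on the left and $Q_R$ on the right, the two averages commute. It therefore suffices to prove
\[
 B_L(Q_Rf)\le B_L(f),\quad B_L(Q_Lf)\le4B_L(f),\quad
 B_R(Q_Lf)\le B_R(f),\quad B_R(Q_Rf)\le4B_R(f),
\]
and compose: $B_L(Q_LQ_Rf)\le4B_L(Q_Rf)\le4B_L(f)$, and similarly on the right.

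\emph{The commuting averages.} Right multiplication commutes with the left swap $\sigma\mapsto s_i\sigma$. Hence $|Q_Rf(s_i\sigma)-Q_Rf(\sigma)|\le\E_h|f(s_i\sigma h)-f(\sigma h)|$. Because $\sigma h$ is uniform, averaging over $\sigma$ gives the $i$th term of $B_L(f)$. The right-hand case with $Q_L$ is symmetric.

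\emph{Left averaging against left boundaries.} Here $Q_Lf(s_i\sigma)=\E_hf(hs_i\sigma)$ over $h\in H_L$, and I split according to where $s_i$ sits.
\begin{itemize}
\item If $s_i$ lies inside one label block, then $hs_i$ is again uniform on $H_L$. The term vanishes.
\item If $s_i$ straddles the boundary between consecutive blocks $I_a,I_{a+1}$, write $hs_i=(hs_ih^{-1})h$. The conjugate $hs_ih^{-1}$ is a transposition $(x,y)$ with $(x,y)$ uniform on $I_a\times I_{a+1}$. It is paired with the uniform deck $h\sigma$, though not independently of it. Jensen and invariance of $U_n$ still bound the term by the average over $(x,y)\in I_a\times I_{a+1}$ of $\E_{U_n}|f((x,y)\sigma)-f(\sigma)|$: for each fixed pair, the event $\{hs_ih^{-1}=(x,y)\}$ depends only on $h$, and $h\sigma$ is uniform given $h$.
\end{itemize}
Next, route each $(x,y)$ by the path $x,x+1,\dots,y-1,\dots,x$ of adjacent label swaps, exactly as in the proof of \cref{lem:routing-basic}. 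As in \eqref{eq:middle-averaging}, edge $j$ receives total weight at most
\[
 2\#\{(x,y):x\le j<y\}/(L_aL_{a+1})\le2.
\]
An adjacent edge inside block $I_a$ is used only by the two boundaries $a-1\mid a$ and $a\mid a+1$. So summing over boundaries gives coefficient at most $4$, proving $B_L(Q_Lf)\le4B_L(f)$. The argument for $B_R(Q_Rf)$ is identical with positions in place of labels. The only care needed is that the routing paths stay inside $I_a\cup I_{a+1}$ and that the block-size ratios do not enter. The congestion count above is uniform regardless of whether the two partitions coincide.

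\emph{Erasure.} By the triangle inequality and $L^1$ contraction of $Q_L$,
\[
 \norm{f-Qf}_1\le\norm{f-Q_Lf}_1+\norm{Q_L(f-Q_Rf)}_1\le\norm{f-Q_Lf}_1+\norm{f-Q_Rf}_1.
\]
On each right coset orbit, $Q_Lf$ is the mean of $f$ under the uniform law of $H_L=\prod_a\mathfrak S_{I_a}$. Each factor is a uniform permutation multislice of length $L_a\le L$. The crude inequality \eqref{eq:crude-path} gives each factor constant $CL_a\le CL$ with its adjacent within-block label swaps as boundary. \Cref{lem:products} then tensorizes these without multiplying constants. Averaging over orbits, the within-block left swaps form a subset of the terms of $B_L(f)$, so $\norm{f-Q_Lf}_1\le CLB_L(f)$. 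Likewise $\norm{f-Q_Rf}_1\le CLB_R(f)$, and adding gives \eqref{eq:block-erasure}.

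I expect the main obstacle to be the conjugation step for boundary-straddling $s_i$. One must check that the joint law of $hs_ih^{-1}$ and $h\sigma$ still lets Jensen produce an honest average of complete-transposition differences under $U_n$. Once that is in place, the rest is the same routing count as in \eqref{eq:middle-averaging}.
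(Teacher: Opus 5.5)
Your proposal is correct and follows the paper's proof essentially step for step. Both use Jensen for the commuting one-sided averages, conjugate a straddling swap to a uniform cross-block transposition, route it with weight at most two per boundary and at most two boundaries per edge, and prove erasure with blockwise crude path inequalities plus $L^1$ contraction. One small inaccuracy: once $\sigma$ is averaged, $hs_ih^{-1}$ and $h\sigma$ are in fact independent, since $h\sigma$ is uniform for every fixed $h$; your fixed-pair argument is exactly this independence.
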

\begin{proof}
Left averaging commutes with right differences, so Jensen gives $B_R(Q_Lf)\le B_R(f)$. Every left difference internal to a label block vanishes. At a boundary between neighboring label blocks $I_a,I_{a+1}$, conjugation and Jensen bound its difference by the average of $\E|f((u\,v)\sigma)-f(\sigma)|$ over $u\in I_a,v\in I_{a+1}$. Routing each transposition along the intervening label edges charges any edge at most twice per neighboring-block pair. An original edge belongs to at most two neighboring-block unions; hence $B_L(Q_Lf)\le4B_L(f)$. Interchanging positions and labels gives $B_L(Q_Rf)\le B_L(f)$ and $B_R(Q_Rf)\le4B_R(f)$. Combining these separate directional estimates yields \eqref{eq:stable-average}, not a factor $16$.
For \eqref{eq:block-erasure}, average successively within the disjoint label blocks. On each orbit of a block of size $\ell$, lift to $\mathfrak S_\ell$ and apply \eqref{eq:crude-path}; other block averages commute with these internal differences and contract them. Summation costs at most $CL B_L(f)$. Position averaging costs at most $CL B_R(f)$, and the intervening averaging is an $L^1$ contraction. Add the two bounds.
\end{proof}

\begin{lemma}[Uniform coarse-table inequality]\label{lem:coarse-table}
If $n/R^2\ge C_0\log n$, for a sufficiently large universal $C_0$, every table-measurable real function satisfies
\begin{equation}\label{eq:coarse-table}
 \norm{F(M)-\E F(M)}_1\le CR^2\sqrt n\,\cB_n(F(M)).
\end{equation}
More generally, for any consecutive label and position partitions into $R$ blocks and smallest expected cell count $\lambda_*=\min_{a,b}L_aL_b/n\ge C_0\log n$, the complete-transposition bound is
\begin{equation}\label{eq:coarse-RT}
 \norm{F(M)-\E F(M)}_1\le
 \frac C{\lambda_*\sqrt n}B_{\rm RT}(F(M)).
\end{equation}
\end{lemma}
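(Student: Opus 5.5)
The plan is to prove the complete-transposition form \eqref{eq:coarse-RT} first and then obtain \eqref{eq:coarse-table} from it by routing. The starting point is to recode the table as a colored word. Color each position by the label block of the label it carries. This gives a uniform multislice word of length $n$ with $R$ colors of multiplicities $L_a$, and $M_{ab}$ is the number of letters of color $a$ in the position segment $I_b$. A left transposition $(u\,v)$ with $u\in I_a$, $v\in I_{a'}$ is a swap of two letters in this word, so it changes $M$ by a $2\times2$ move. Hence $B_{\rm RT}(F(M))$, computed on the left, equals the complete-transposition boundary of $F(M)$ regarded as a function on the word. Such a function is also invariant under permutations inside each segment.

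To prove \eqref{eq:coarse-RT}, I would decompose by pairs of segments, mimicking the proof of \cref{lem:hypergeom}.
\begin{enumerate}[label=(\roman*)]
\item For a pair $\{b,b'\}$, condition on the counts in all other segments and on the color counts $K_a=M_{ab}+M_{ab'}$ of the merged segment. The row vector $(M_{ab})_a$ is then multivariate hypergeometric with capacities $K_a$ and sample size $L_b$. The ratio $L_b/(L_b+L_{b'})$ is about $1/2$. By \cref{lem:concentration}, $K_a\ge\lambda_*$ for every $a$ outside an event of probability $Ce^{-c\lambda_*}$, and $\lambda_*\ge C_0\log n$ covers $\log(R+1)$.
\item On the good conditionings, \cref{lem:hypergeom} bounds the conditional centered $L^1$ norm by $C\lambda_*^{-3/2}$ times the exchange boundary. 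That boundary is exactly the sum of $|\Delta F|$ over letter swaps between segments $b$ and $b'$: the rate $X_i(K_j-X_j)$ counts these swaps.
\item I would glue the pairs with a segment-pair heat-bath Poincaré inequality analogous to \eqref{eq:HB-poincare}, now with segments as coordinates. Applied to indicators, $2\Var$ equals the centered $L^1$ norm. The bad conditionings contribute an error $CR^2e^{-c\lambda_*}$, which is negligible against $n^{-10}$.
\item Rare sets are absorbed exactly as in \cref{lem:hypergeom}. The lifted indicator lives on a multislice whose smallest class is at least $n/R\ge\lambda_*$. Then \eqref{eq:smallset-RT} gives $2\alpha\le C(\log R)\,n^{-1}\log(1/\alpha)^{-1}B_{\rm RT}$, which is better than the target once $\log(1/\alpha)\gtrsim\lambda_*$.
\end{enumerate}

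The main obstacle is the normalization of the gluing step. A pair heat-bath gap of order $1$, as in \eqref{eq:HB-poincare}, only yields the coefficient $\lambda_*^{-3/2}\approx R^3n^{-3/2}$. The target $1/(\lambda_*\sqrt n)\approx R^2n^{-3/2}$ is smaller by a factor $R$. So I need the segment-pair heat-bath on $R$ coordinates to have gap $\ge cR$, which is the analogue of the random-transposition gap. I would first try the coupling argument with better bookkeeping. A unit discrepancy between segments $b,b'$ is removed at rate one by updating $\{b,b'\}$. It is moved, but not enlarged, by the $2(R-2)$ pairs meeting one of them. I would try to show that these updates, with a common ordering, already merge the discrepancy with probability bounded below, since after a one-sided update the new discrepancy lies in a freshly resampled pair. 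That would give contraction at rate $\ge cR$. If this fails, the fallback is a spectral comparison: the top eigenfunctions should be linear in the segment counts, as for exchange processes, and the bound for their eigenvalue of order $R$ is what has to be verified. A second route would apply the full multislice Poincaré inequality \eqref{eq:external-poincare}, which has gap $n$, to the word at the coarse level. That route gives only the crude $1/n$ factor, so it is mainly a consistency check.

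For \eqref{eq:coarse-table}, I would route each complete transposition into $B_L$ and $B_R$ terms. Because $F(M)$ is $H$-invariant on both sides, the adjacent swap across the boundary between $I_a$ and $I_{a+1}$ has the same law of $|\Delta F|$ as a uniform cross-pair swap between those two blocks; this is the conjugation used in \cref{lem:block-average}. A transposition between blocks $a<a'$ is routed through the intervening block boundaries as in \eqref{eq:RT-route}, and edges internal to blocks cost nothing. Each block-boundary edge is straddled by at most $n^2/4$ pairs, so $B_{\rm RT}(F(M))\le Cn^2B_L(F(M))$. Inserting this into \eqref{eq:coarse-RT} with $\lambda_*\asymp n/R^2$ gives the coefficient $Cn^2/(\lambda_*\sqrt n)=CR^2\sqrt n$. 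The right side of \eqref{eq:coarse-table} is $\cB_n\ge B_L$, so \eqref{eq:coarse-table} follows. Finally, \cref{lem:coarea} extends both inequalities from indicators to all table-measurable $F$.
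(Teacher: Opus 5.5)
\textbf{There is a genuine gap, at the gluing step.} Your reduction is sound up to that point: the word recoding, the pairwise conditioning, the per-pair use of \cref{lem:hypergeom}, the rare-set step, and the final routing. But with the gap-one Poincar\'e inequality \eqref{eq:HB-poincare} the pairs only glue to $C\lambda_*^{-3/2}B_{\rm RT}$, a factor $R$ too weak, as you note. You therefore need the segment-pair heat bath on contingency tables to have gap at least $cR$, and the proposal does not prove this.

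The coupling you suggest cannot supply it. Suppose the two copies differ by a unit discrepancy between segments $b$ and $b'$. An update of $\{b,b''\}$ with $b''\neq b'$ leaves segment $b'$ untouched, so column $b'$ of the two tables still differs and the copies cannot coalesce. The discrepancy is removed only when the exact pair $\{b,b'\}$ rings, at rate one, which merely reproduces \eqref{eq:HB-poincare}.

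The fallback is also unproved. For equal blocks, the centered linear statistics $M_{ab}-L_aL_b/n$ do have eigenvalue $R/2$. Showing that no other function has a smaller Rayleigh quotient is a genuine spectral theorem, and the naive comparison with the complete-transposition gap $n$ does not give it. On a two-segment quotient, $I-P_{bb'}$ dominates the cross-transposition form only up to a factor of order $(R/n)^2$, which yields a gap of order $R^2/n\le1$.

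There is a secondary limitation. \cref{lem:hypergeom} needs $L_b/(L_b+L_{b'})\in[1/3,2/3]$, and turning $\lambda_*^{-3/2}/R$ into $1/(\lambda_*\sqrt n)$ uses $R^2\lambda_*\asymp n$. So even a completed gluing step would cover only comparable block sizes, not the arbitrary consecutive partitions allowed in \eqref{eq:coarse-RT}.

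\textbf{The missing idea is that no gluing is needed.} The paper applies the square-root boundary estimate \eqref{eq:sqrt-boundary} on all of $\Sn$. The complete-transposition gap $n$ there supplies the factor $1/\sqrt n$. The proof then uses a quantization of the crossing count.
\begin{itemize}
\item A transposition that changes a table-measurable indicator exchanges cards from two cells $(a,b),(c,d)$ with $a\ne c$ and $b\ne d$.
\item All $M_{ab}M_{cd}$ such exchanges produce the same table change.
\item Hence on $G_0=\{M_{ab}\ge\lambda_*/2\ \forall a,b\}$ the count $R_E$ is either $0$ or at least $\lambda_*^2/4$, so $\sqrt{R_E}\le2R_E/\lambda_*$ there.
\item Off $G_0$ one uses the trivial bound $\sqrt{R_E}\le n$, and $U_n(G_0^c)\le R^2e^{-\lambda_*/8}$.
\end{itemize}
Together these give $\E\sqrt{R_E}\le 2B_{\rm RT}(\1_E)/\lambda_*+nR^2e^{-\lambda_*/8}$. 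The resulting additive error $C\sqrt n\,R^2e^{-\lambda_*/8}$ is either absorbed or handled by the log-Sobolev rare-set bound, exactly as in your step (iv). Routing via \eqref{eq:RT-route} then gives \eqref{eq:coarse-table}, as in your last paragraph. This argument is uniform in the two partitions and requires no block-level spectral gap.
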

\begin{proof}
Consider a table-measurable event $E$, write $\alpha=U_n(E)\le1/2$ by complementing, and let $R_E$ be the number of complete transpositions crossing it. Each cell is hypergeometric with mean at least $\lambda_*$. On
$G_0=\{M_{ab}\ge\lambda_*/2\ \forall a,b\}$,
any crossing transposition must exchange two cards in different label blocks and different position blocks. If their cells are $(a,b),(c,d)$, all $M_{ab}M_{cd}$ exchanges between those two cells have the same table change and cross the event. Thus $R_E=0$ or $R_E\ge\lambda_*^2/4$, implying $\sqrt{R_E}\le2R_E/\lambda_*$ on $G_0$. By \cref{lem:concentration},
$\eta:=U_n(G_0^c)\le R^2e^{-\lambda_*/8}$.
On $G_0^c$ use $\sqrt{R_E}\le\sqrt{\binom n2}\le n$ in \eqref{eq:sqrt-boundary}. The resulting bound is
\begin{equation}\label{eq:coarse-error-correct}
 2\alpha(1-\alpha)
 \le\frac C{\lambda_*\sqrt n}B_{\rm RT}(\1_E)
 +C\sqrt n\,R^2e^{-\lambda_*/8}.
\end{equation}
In particular the exceptional error has a factor $\sqrt n$.
If $\alpha$ exceeds twice the last term, absorb that term using $2\alpha(1-\alpha)\ge\alpha$. Otherwise,
\[
 \log(1/\alpha)\ge\lambda_*/8-\tfrac12\log n-2\log R-O(1)
 \ge c\lambda_*,
\]
provided the universal threshold constant in $\lambda_*\ge C_0\log n$ is large enough; $R\le n$ is sufficient here. In this rare-set case \eqref{eq:smallset-RT} for permutations gives
\[
 2\alpha(1-\alpha)\le\frac{C\log n}{n\lambda_*}B_{\rm RT}(\1_E)
 \le\frac C{\lambda_*\sqrt n}B_{\rm RT}(\1_E),
\]
since $\log n\le C\sqrt n$. This proves \eqref{eq:coarse-RT} for all events and then for all functions by coarea. Routing \eqref{eq:RT-route} and $\lambda_*\ge cn/R^2$ for the near-equal blocks give \eqref{eq:coarse-table}. The hypotheses imply the needed comparability once finitely many small cases are absorbed into constants.
\end{proof}

\subsection{Exact product coordinates on a fixed-table fiber}
Fix a feasible table $M$. For each label block $I_a$, list its labels increasingly and record the position-block index of each; this is a word $A_a$ with $M_{ab}$ copies of color $b$. For each position block $I_b$, record in increasing position order the label-block index of the card there; this is a word $B_b$ with $M_{ab}$ copies of color $a$.
For each cell $(a,b)$, let $\ell_1<\cdots<\ell_m$ be its assigned labels and $x_1<\cdots<x_m$ its assigned positions, where $m=M_{ab}$. Define
\[
 \Pi_{ab}(j)=k\quad\text{when}\quad\sigma(x_j)=\ell_k.
\]
Thus each cell permutation uses the same label-at-position convention as $\sigma$. Empty cells and one-point cells have their unique trivial permutation.

\begin{lemma}[Product coordinates]\label{lem:fiber-product}
Under $U_n(\cdot\mid M)$, the words $A_a,B_b$ and the permutations $\Pi_{ab}$ are all independent and uniform on their respective multislices and symmetric groups.
\end{lemma}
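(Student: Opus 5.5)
The plan is a bijection-and-counting argument. Under $U_n(\cdot\mid M)$ every permutation with table $M$ has the same probability. So it suffices to show that the map
\[
 \Psi:\sigma\mapsto\bigl((A_a)_a,(B_b)_b,(\Pi_{ab})_{a,b}\bigr)
\]
is a bijection from $\{\sigma:M(\sigma)=M\}$ onto the full product of the multislices $\Omega_{(M_{ab})_b}$ (for each $a$), the multislices $\Omega_{(M_{ab})_a}$ (for each $b$), and the symmetric groups $\mathfrak S_{M_{ab}}$. A uniform law on a product set is the product of the uniform laws on its factors. Independence and the uniform marginals then follow at once, with zero classes discarded and one-point factors trivial as in the convention of \cref{sec:tools}.

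\emph{Well-definedness.} The word $A_a$ has length $L_a$, and it contains color $b$ exactly $\#\{\ell\in I_a:\sigma^{-1}(\ell)\in I_b\}=M_{ab}$ times. The word $B_b$ has length $L_b$, and it contains color $a$ exactly $M_{ab}$ times by the definition of $M$. In cell $(a,b)$, the positions $x\in I_b$ with $\sigma(x)\in I_a$ number $m=M_{ab}$, as do the labels in $I_a$ sent to $I_b$. The map $x_j\mapsto\sigma(x_j)$ is therefore a bijection between two sorted $m$-sets, so $\Pi_{ab}\in\mathfrak S_m$.

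\emph{Inverse map.} Given arbitrary words and cell permutations in the product, I reconstruct $\sigma$ as follows.
\begin{enumerate}
\item From $A_a$, the labels of $I_a$ listed increasingly and carrying color $b$ form the set $\{\ell_1<\dots<\ell_m\}$ for cell $(a,b)$.
\item From $B_b$, the positions of $I_b$ listed increasingly and carrying color $a$ form $\{x_1<\dots<x_m\}$.
\item Set $\sigma(x_j)=\ell_{\Pi_{ab}(j)}$.
\end{enumerate}
As $a,b$ range, the position sets partition $[n]$, because each position of $I_b$ carries exactly one color in $B_b$. Likewise the label sets partition $[n]$ via the words $A_a$. Each cell map is a bijection, so $\sigma\in\Sn$. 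By construction the table of $\sigma$ is $M$, since cell $(a,b)$ receives exactly $M_{ab}$ positions of $I_b$ with labels in $I_a$. Applying $\Psi$ to this $\sigma$ returns the given data. Conversely, $\sigma$ is recovered from $\Psi(\sigma)$ by the same recipe. Hence $\Psi$ is a bijection.

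As a consistency check, count the fiber:
\[
 \prod_a\frac{L_a!}{\prod_bM_{ab}!}\cdot\prod_b\frac{L_b!}{\prod_aM_{ab}!}\cdot\prod_{a,b}M_{ab}!.
\]
Dividing by $n!$ reproduces \eqref{eq:table-law}, which confirms the bijection.

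The only delicate point is the inverse construction. One must check that the label-at-position convention is used consistently: $\Pi_{ab}$ indexes positions by $j$ and labels by $k$. One must also check that empty cells cause no ambiguity. I expect nothing harder than careful bookkeeping here.
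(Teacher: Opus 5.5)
Your proposal is correct and follows the paper's proof: both show that the map from the fiber $\{M(\sigma)=M\}$ to the product of the two word families and the cell permutations is a bijection via the reconstruction $\sigma(x_j)=\ell_{\Pi_{ab}(j)}$. Both then check the count against \eqref{eq:table-law} and conclude independence and uniformity because the uniform law on a finite Cartesian product is the product of the uniform laws; you simply write out the well-definedness and inverse checks that the paper states in one line each.
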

\begin{proof}
Any choice of the two word families specifies the increasing label and position lists in each cell. Any choice of the cell permutations then reconstructs a unique deck by $\sigma(x_j)=\ell_{\Pi_{ab}(j)}$. Conversely every deck yields these choices. The total number is
\[
 \left(\prod_a\frac{L_a!}{\prod_bM_{ab}!}\right)
 \left(\prod_b\frac{L_b!}{\prod_aM_{ab}!}\right)
 \prod_{a,b}M_{ab}!
 =\frac{\prod_aL_a!\prod_bL_b!}{\prod_{a,b}M_{ab}!},
\]
agreeing with the fiber count in \eqref{eq:table-law}. Uniformity on a finite Cartesian product is the product of the uniform laws, proving independence.
\end{proof}

Write $\omega=(A_1,\ldots,A_R,B_1,\ldots,B_R)$ and $\Pi=(\Pi_{ab})$. An adjacent label exchange within $I_a$ between two different position-block colors swaps those colors in $A_a$ and changes no other word or cell permutation. Indeed replacing a label by the neighboring integer leaves its rank among the labels of its own cell unchanged. If the two colors agree, the exchange instead left-multiplies the corresponding $\Pi_{ab}$ by its adjacent rank transposition. Position exchanges have the analogous effect on $B_b$ or right-multiply a cell permutation. These statements specify the action on the product coordinates, not merely the resulting cell counts.

Define the physical within-fiber boundary
\begin{align*}
 B_M(f)={}&\E_{U_n(\cdot\mid M)}
 \sum_{i:\,i,i+1\in I_a\text{ for some }a}|f(s_i\sigma)-f(\sigma)|\\
 &+\E_{U_n(\cdot\mid M)}
 \sum_{i:\,i,i+1\in I_b\text{ for some }b}|f(\sigma s_i)-f(\sigma)|.
\end{align*}
Both move types preserve $M$. Let $B_{\rm assign}(f)$ be the expected sum of differences made by swapping neighboring colors in one of the words, holding $\Pi$ and the other words fixed. Equal-color swaps contribute zero. The action description gives $B_{\rm assign}(f)\le B_M(f)$.
For fixed assignments, define
\begin{equation}\label{eq:internal-boundary-def}
 b_{\rm int}(f;\omega)=\sum_{a,b}\sum_{j=1}^{M_{ab}-1}\E_\Pi
 \left[|f(\omega,s_j^{ab}\Pi)-f(\omega,\Pi)|
 +|f(\omega,\Pi s_j^{ab})-f(\omega,\Pi)|\right],
\end{equation}
where $s_j^{ab}$ acts only in cell $(a,b)$ by the adjacent transposition of ranks $j,j+1$. The expectation includes all cell permutations, not the assignments.

\subsection{Balanced fibers, gaps, and exceptional assignments}
Call a table balanced if
\begin{equation}\label{eq:balanced}
 n/(3R^2)\le M_{ab}\le3n/R^2\quad\text{for all }a,b.
\end{equation}
On such a fiber, every assignment word has length comparable to $n/R$ and minimum color count comparable to $n/R^2$. Equation \eqref{eq:multislice-path} and product tensorization imply, for each fixed $\Pi$ and then averaged in $\Pi$,
\begin{equation}\label{eq:assignment-control}
 \E_\Pi\norm{f-\E_\omega f}_{1,\omega}
 \le CR\sqrt n\,B_{\rm assign}(f).
\end{equation}

Let $d_R=\lceil C_gR\log n\rceil$ for a sufficiently large universal $C_g$. Let $G$ be the event that every gap between consecutive occurrences of a color in every assignment word is at most $d_R$. This event depends only on $\omega$. Each color has density at least $c/R$. If a gap exceeds $d_R$, some interval of $d_R$ consecutive coordinates avoids that color. \Cref{lem:concentration} and a union bound over $2R$ words, $R$ colors, and at most $n$ starting points per word give
\begin{equation}\label{eq:gap-bad}
 \eta:=\Pp_\omega(G^c)\le2R^2n\,e^{-cd_R/R}\le1/100
\end{equation}
for all sufficiently large $n$ under $R\le n$; increasing $C_g$ fixes the constant. If $d_R$ exceeds a word's length, that word cannot have such a gap and contributes no bad intervals.

\begin{lemma}[Exceptional assignments]\label{lem:exceptional}
For an indicator $f$ on a balanced fiber, complement it within the fiber so that $\alpha=\E[f\mid M]\le1/2$. Then
\begin{equation}\label{eq:exceptional-var}
 (1-2\eta)\,2\Var(f\mid M)
 \le2\E_\omega[\1_G\Var_\Pi f]
 +6\E_\Pi\Var_\omega f.
\end{equation}
\end{lemma}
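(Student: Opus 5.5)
The plan is to combine the variance decomposition for the independent coordinates $(\omega,\Pi)$ of \cref{lem:fiber-product} with two elementary facts about indicators. The bad assignments $G^c$ are then charged either to the factor $\eta$ times the conditional mass $\alpha$, or to the assignment-side variance. Throughout, all laws are conditional on the fixed balanced table $M$. Write $g(\omega)=\E_\Pi f$ and $h(\Pi)=\E_\omega f$, so that $\E g=\E h=\alpha\le 1/2$.

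\emph{Step 1 (decomposition).} By independence of $\omega$ and $\Pi$,
\[
 \Var(f\mid M)=\E_\omega\Var_\Pi f+\Var_\omega g .
\]
Jensen in $\Pi$, exactly as in the proof of \eqref{eq:var-tensor}, gives $\Var_\omega g\le \E_\Pi\Var_\omega f$. Splitting the first term along $G$ gives
\[
 2\Var(f\mid M)\le 2\E_\omega[\1_G\Var_\Pi f]+2\E_\omega[\1_{G^c}\Var_\Pi f]+2\E_\Pi\Var_\omega f .
\]

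\emph{Step 2 (the bad assignments).} This is the only nontrivial point. A naive Cauchy--Schwarz bound $\E[\1_{G^c}|g-\alpha|]\le\sqrt{\eta\Var g}$ does not close when $\alpha$ is small, so I will instead exchange the order of integration and use the indicator structure on the $\Pi$ side. Since $f\in\{0,1\}$, we have $\Var_\Pi f=g(1-g)\le g$. Hence
\[
 \E_\omega[\1_{G^c}\Var_\Pi f]\le\E_\omega[\1_{G^c}g]=\E_\Pi\bigl[\E_\omega(\1_{G^c}f)\bigr].
\]
Because $G$ depends only on $\omega$, for each fixed $\Pi$ the inner quantity is at most $\min(\eta,h(\Pi))$.

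I then claim the pointwise bound $\min(\eta,h)\le 2\eta h+2h(1-h)$ for $h\in[0,1]$:
\begin{itemize}[nosep]
\item if $h\ge1/2$, then $\min(\eta,h)\le\eta\le 2\eta h$;
\item if $h\le1/2$, then $\min(\eta,h)\le h\le 2h(1-h)$.
\end{itemize}
Since $h(1-h)=\Var_\omega f$ for the fixed $\Pi$, averaging over $\Pi$ gives
\[
 \E_\omega[\1_{G^c}\Var_\Pi f]\le 2\eta\alpha+2\E_\Pi\Var_\omega f .
\]

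\emph{Step 3 (absorption).} Insert this into Step 1:
\[
 2\Var(f\mid M)\le 2\E_\omega[\1_G\Var_\Pi f]+4\eta\alpha+6\E_\Pi\Var_\omega f .
\]
Since $\alpha\le1/2$, we have $\alpha\le 2\alpha(1-\alpha)=2\Var(f\mid M)$, so $4\eta\alpha\le 2\eta\cdot 2\Var(f\mid M)$. Moving this term to the left yields \eqref{eq:exceptional-var} with exactly the stated constants $2$ and $6$.

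Only the upper bound $\eta\le1/100$ from \eqref{eq:gap-bad} is needed afterwards, to make $1-2\eta$ positive; the inequality itself holds for any $\eta$.
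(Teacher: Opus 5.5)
Your route is the paper's: the total-variance decomposition over the independent coordinates $(\omega,\Pi)$, the indicator bound $\Var_\Pi f\le\E_\Pi f$, exchanging the order of integration on $G^c$, charging the bad assignments to $\eta h(\Pi)$ plus assignment-side variance, and absorbing via $\alpha\le2\Var(f\mid M)$. However, Step 3 contains a false inequality. From $\alpha\le2\Var(f\mid M)$ you only get $4\eta\alpha\le4\eta\cdot2\Var(f\mid M)$. Your claimed $4\eta\alpha\le2\eta\cdot2\Var(f\mid M)$ would require $\alpha\le\alpha(1-\alpha)$, which fails whenever $\alpha>0$. As written, your chain therefore proves only
\[
 (1-4\eta)\,2\Var(f\mid M)\le2\E_\omega[\1_G\Var_\Pi f]+6\E_\Pi\Var_\omega f,
\]
not the stated inequality with $(1-2\eta)$. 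Your closing claim of ``exactly the stated constants'' is therefore unjustified. The weaker form would suffice downstream, since $\eta\le1/100$, but it is not the lemma.

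The lost factor originates in Step 2, where your pointwise bound is needlessly lossy. It can be sharpened to
\[
 \min(\eta,h)\le\eta h+2h(1-h).
\]
For $h\ge1/2$ this holds because $\eta-\eta h=\eta(1-h)\le2h(1-h)$, using $\eta\le1\le2h$; the case $h\le1/2$ is as you had it. Equivalently, as the paper does, write $\E_\omega[f\1_{G^c}]\le\eta h(\Pi)+\E_\omega|f-h(\Pi)|=\eta h(\Pi)+2\Var_\omega f$. Either way you get $\E_\omega[\1_{G^c}\Var_\Pi f]\le\eta\alpha+2\E_\Pi\Var_\omega f$. The error term in Step 3 becomes $2\eta\alpha\le2\eta\cdot2\Var(f\mid M)$, and the stated constants $(1-2\eta)$, $2$, $6$ follow.
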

\begin{proof}
The product variance inequality gives $\Var f\le\E_\omega\Var_\Pi f+V_A$, where $V_A=\E_\Pi\Var_\omega f$. For an indicator, $\Var_\Pi f\le\E_\Pi f$. For fixed $\Pi$, set $h(\Pi)=\E_\omega f$. Since the law of $\omega$ is independent of $\Pi$,
\[
 \E_\omega[f\1_{G^c}]\le\eta h(\Pi)+\E_\omega|f-h(\Pi)|
 =\eta h(\Pi)+2\Var_\omega f.
\]
Averaging gives $\E[\1_{G^c}\Var_\Pi f]\le\eta\alpha+2V_A$, so
$\Var f\le\E[\1_G\Var_\Pi f]+\eta\alpha+3V_A$.
Finally $\alpha\le2\Var f$ because $\alpha\le1/2$. Rearrange and multiply by two. No independence of $f$ from $G$ is assumed.
\end{proof}

\begin{lemma}[Routing consecutive cell ranks]\label{lem:cell-routing}
On a balanced fiber,
\begin{equation}\label{eq:cell-routing}
 \E_\omega[\1_G\,b_{\rm int}(f;\omega)]\le C d_R B_M(f).
\end{equation}
\end{lemma}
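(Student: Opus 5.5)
The plan is to realize each internal rank transposition as a bounded-length path of physical within-block adjacent transpositions, telescope $f$ along that path, and count congestion after a measure-preserving change of variables. Only the left half of \eqref{eq:internal-boundary-def} is treated in detail; the right half is the mirror argument with positions, the words $B_b$, and right multiplication.

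\textbf{The path.} Fix a cell $(a,b)$ and a rank $j<M_{ab}$. The move $\Pi\mapsto s_j^{ab}\Pi$ is, on the deck, left multiplication by the label transposition $(\ell_j\,\ell_{j+1})$. Here $\ell_j<\ell_{j+1}$ are consecutive labels of $I_a$ carrying color $b$ in $A_a$, so every label strictly between them lies in $I_a$ and has a different color. On $G$ the number of such labels is at most $d_R$. As in the proof of \eqref{eq:RT-route}, write $(\ell_j\,\ell_{j+1})$ as the composition of the adjacent label swaps with indices $\ell_j,\ell_j+1,\ldots,\ell_{j+1}-1,\ell_{j+1}-2,\ldots,\ell_j$. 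This path has length at most $2d_R+1$. Every edge $(i,i+1)$ on it lies inside $I_a$, so every step is a move counted by $B_M$. Telescoping gives, on $G$,
\[
|f(s_j^{ab}\sigma)-f(\sigma)|\le\sum_{k}|f(s_{e_k}\gamma_{k-1}\sigma)-f(\gamma_{k-1}\sigma)|,
\]
where $\gamma_k$ is the $k$th path prefix and $e_k$ is its edge. Because the path is encoded by physical label swaps, the identity holds as an identity of decks and is insensitive to how the intermediate swaps act on $(\omega,\Pi)$.

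\textbf{Change of variables.} Each prefix $\gamma_k$ is a product of within-block adjacent swaps, so it preserves $M$ and is a bijection of the fiber. It therefore preserves $U_n(\cdot\mid M)$. The event $G$ and the path depend on the original $\omega$, and intermediate states may leave $G$. So I will first use $\1_G$ only to bound the path length, then drop the indicator after substituting $\sigma'=\gamma_{k-1}\sigma$. To avoid a pair-dependent map, I index paths by their starting label $\ell=\ell_j$, which determines the pair because $\ell_{j+1}$ is the next label in $I_a$ with the same color. I also index by the step $k\le 2d_R+1$. For each fixed $(\ell,k)$, the map $\sigma\mapsto\gamma_{k-1}\sigma$ is a fixed bijection on the set where the path is defined.

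\textbf{Congestion count, the main obstacle.} The delicate point is bounding, for fixed $\sigma'$ and edge $e$, the number of pairs $(\ell,k)$ with $e_k=e$ at preimage $\sigma$ in $G$. The edge used at step $k$ is $\ell+k-1$ on the outward leg, or the corresponding index on the return leg. Hence $e$ and $k$ determine $\ell$ up to two choices. Since $k$ ranges over at most $2d_R+1$ values, the multiplicity is at most $2(2d_R+1)$, uniformly in $\sigma'$ and $e$. Summing over cells, ranks, and both sides then gives
\[
\E_\omega[\1_G b_{\rm int}]\le C d_R B_M(f),
\]
which is \eqref{eq:cell-routing}. The parts I would check carefully are that $\E_\Pi$ in \eqref{eq:internal-boundary-def} combined with $\E_\omega$ is exactly $\E_{U_n(\cdot\mid M)}$, by \cref{lem:fiber-product}, and that the outward and return legs of the path are counted consistently in this bound.
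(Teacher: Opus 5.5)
The path, the telescoping, and the use of invariance of $U_n(\cdot\mid M)$ under within-block swaps all match the paper's proof. The gap is in the congestion count, the step you flagged as the main obstacle: both claims you rely on there are false as stated. Write $\ell'$ for the partner of $\ell$. Step $k$ on the return leg uses edge $2\ell'-\ell-k-1$, and whether step $k$ lies on the outward or the return leg also depends on $\ell'$. But $\ell'$ is a function of the unknown source deck, not of $(\ell,k)$.

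So for fixed $(\ell,k)$ the prefix $\gamma_{k-1}$ is not a fixed permutation, and $\sigma\mapsto\gamma_{k-1}\sigma$ need not be injective. For example, take $\ell=1$, $k=5$, and any $\tau$ whose label word begins $(u,c_2,u,c_2,c_1,c_1)$, with three distinct colors. Undoing the first four steps of the path for $(1,6)$ gives a source with word $(c_1,u,c_2,u,c_2,c_1,\ldots)$. Undoing the first four steps of the path for $(1,4)$ gives a different source, with word $(c_2,u,u,c_2,c_1,c_1,\ldots)$. In each source, label $1$ has exactly the partner used.

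Likewise, $e$ and $k$ do not determine $\ell$ on the return leg. The arithmetic only gives $2\ell'-\ell=e+k+1$, which has of order $k$ solutions. An honest count from these facts alone gives multiplicity $O(d_R^2)$. That would put $R^2(\log n)^2$ in front of $A(4n/R^2)$ in the recurrence, and the recurrence would then no longer close at exponent $1/2+o(1)$.

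The missing idea is that the partner can be read off the intermediate deck. Fix a directed edge $(\tau,s_i\tau)$.
\begin{itemize}
\item On the outward leg, the moving card sits at label $i$ of $\tau$ and keeps its original color $c$. Labels $i+1,\ldots,\ell'$ are untouched, so $\ell'$ is the first occurrence of $c$ strictly to the right of $i$ in $\tau$'s word.
\item On the return leg, the moving card is the original $\ell'$-card, of color $c$, and it sits at $i+1$. Labels $i+2,\ldots,\ell'-1$ have been restored to their non-$c$ originals, and label $\ell'$ carries the original $\ell$-card, again of color $c$. So $\ell'$ is the first occurrence of $c$ strictly beyond $i+1$.
\end{itemize}
Thus $(\tau,i)$ together with the leg determines $\ell'$. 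Then $\ell\in[\ell'-d_R,i]$ has at most $d_R$ values, and $(\ell,\ell',i,\text{leg})$ fixes $k$, hence the source, by undoing a now fixed prefix.

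Index the change of variables by $(\ell,\ell',k)$. For these, $\gamma_{k-1}$ is a genuine fixed left multiplication, injective on the decks where $(\ell,\ell')$ is a consecutive same-color pair. The reconstruction above then gives at most $2d_R$ sources per directed edge, which is exactly the paper's count. It also makes your final bound $2(2d_R+1)$ true, but for this reason rather than the one you give.
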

\begin{proof}
Consider the label side; the position side is identical with the actions reversed. A virtual adjacent-rank move inside a cell exchanges labels $a<b$ that are consecutive occurrences of a fixed position-block color in one word $A_u$. On $G$, $b-a\le d_R$. Perform in chronological order the physical label swaps with indices
\begin{equation}\label{eq:cell-word}
 a,a+1,\ldots,b-1,b-2,\ldots,a.
\end{equation}
Their composition exchanges $a,b$ and fixes every other label. All lie in the parent label block, so every intermediate deck belongs to the same table fiber, even though its assignments need not remain in $G$.

We count paths using a fixed intermediate directed label edge $(\tau,s_i\tau)$. On the outward leg of \eqref{eq:cell-word}, the moving color is the color currently at label index $i$ in $\tau$'s assignment word. The next occurrence of that color strictly to the right determines $b$, because the original endpoints were consecutive occurrences. There are at most $d_R$ choices of the original $a$. Given $a,b,i$ and the leg, the source deck is uniquely reconstructed by reversing the known path prefix. On the return leg the moving color is at index $i+1$; its next occurrence strictly beyond $i+1$ determines $b$, and again there are at most $d_R$ possible $a$. Thus each directed edge is used by at most $2d_R$ source paths. The reconstructed endpoints also determine the cell; there is no further factor for the number of cells. For $b=a+1$ the path has one outward step and no return step, consistent with the bound.

Telescoping $|f(\text{end})-f(\text{source})|$ along each path uses no squared path-length factor. The fixed-table measure is uniform, so source and intermediate decks have the same weight. Summing over all sources with $\omega\in G$, all virtual label moves, and then all physical directed edges proves the label-side comparison with factor $2d_R$. The position-side proof contributes to the corresponding position part of $B_M$, not an extra number-of-cells factor. This proves \eqref{eq:cell-routing}.
\end{proof}

\begin{proposition}[Balanced-fiber inequality]\label{prop:fiber-recursion}
Every balanced fiber satisfies, for all real $f$,
\begin{equation}\label{eq:fiber-recursion}
 \norm{f-\E[f\mid M]}_{1\mid M}
 \le C\left[R\sqrt n+R\log n\,A(4n/R^2)\right]B_M(f).
\end{equation}
\end{proposition}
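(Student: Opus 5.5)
By \cref{lem:coarea} applied on the fiber, it suffices to treat indicators. For a fiber indicator $f$, complement within the fiber so that $\alpha=\E[f\mid M]\le1/2$. Then $\norm{f-\E[f\mid M]}_{1\mid M}=2\Var(f\mid M)$, so the target is $2\Var(f\mid M)\le C[R\sqrt n+R\log n\,A(4n/R^2)]B_M(f)$. The starting point is \cref{lem:exceptional}. Since $\eta\le1/100$ by \eqref{eq:gap-bad}, the factor $1-2\eta$ is bounded below, and we have
\[
 2\Var(f\mid M)\le C\,\E_\omega[\1_G\Var_\Pi f]+C\,\E_\Pi\Var_\omega f .
\]
The two terms are bounded separately: the assignment term by the colored-word inequality, and the cell-permutation term by recursion plus routing.

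\textbf{Assignment term.} For an indicator, $2\Var_\omega f=\norm{f-\E_\omega f}_{1,\omega}$. Equation \eqref{eq:assignment-control} is product tensorization of \cref{prop:multislice-path} over the $2R$ words. Each word has length $N\asymp n/R$ and minimum class $k\asymp n/R^2$ on a balanced fiber, so $N^2/k^{3/2}\asymp R\sqrt n$. Combining this with $B_{\rm assign}\le B_M$ gives
\[
 \E_\Pi\Var_\omega f\le CR\sqrt n\,B_M(f).
\]
This term produces the first summand in \eqref{eq:fiber-recursion}.

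\textbf{Cell-permutation term.} Fix $\omega\in G$. By \cref{lem:fiber-product}, the $\Pi_{ab}$ are independent uniform permutations of sizes $M_{ab}\le3n/R^2\le4n/R^2$. On each cell, the two-sided inequality with optimal constant $C^\star_{M_{ab}}\le A(4n/R^2)$ has boundary exactly the $(a,b)$ summand of \eqref{eq:internal-boundary-def}. Left and right rank transpositions in $\mathfrak S_{M_{ab}}$ correspond to $B_L,B_R$ in the convention $\Pi_{ab}(j)=k$. Tensorizing with \cref{lem:products}, at constant $\max C_j$, gives
\[
 2\Var_\Pi f=\norm{f-\E_\Pi f}_{1,\Pi}\le A(4n/R^2)\,b_{\rm int}(f;\omega).
\]
Multiplying by $\1_G$, averaging over $\omega$, and applying \cref{lem:cell-routing} then gives
\[
 \E_\omega[\1_G\Var_\Pi f]\le C d_R A(4n/R^2)B_M(f).
\]
Since $d_R\le CR\log n$, this is the second summand. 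Adding the two contributions proves \eqref{eq:fiber-recursion} for indicators, and coarea extends it to all real $f$.

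\textbf{Main obstacle: checking the conventions in the cell-permutation step.} The bookkeeping must be exact here, because any mismatch would add an extra factor. Two things need checking.
\begin{itemize}[nosep]
 \item The boundary in $b_{\rm int}$ must coincide exactly with the $\cB_{M_{ab}}$ boundary of each cell. This requires left multiplication by $s^{ab}_j$ to swap the labels of ranks $j,j+1$, matching $B_L$, and right multiplication to swap positions, matching $B_R$.
 \item The restriction to $G$ must be harmless. Because $G$ depends only on $\omega$, multiplying by $\1_G$ commutes with the $\Pi$-level inequality, so no independence of $f$ from $G$ is needed.
\end{itemize}
Cells of size $1$ contribute nothing, consistent with $C^\star_1=0$. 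If the balanced-fiber bound $M_{ab}\le3n/R^2$ is used in place of $4n/R^2$, monotonicity of $A$ keeps the argument of $A$ within $4n/R^2$.
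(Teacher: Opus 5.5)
Your proposal is correct and follows essentially the same route as the paper: reduce to fiber indicators, apply \cref{lem:exceptional}, bound the assignment term via \eqref{eq:assignment-control}, and bound the cell-permutation term by tensorizing the optimal constants $C^\star_{M_{ab}}\le A(4n/R^2)$, then applying \cref{lem:cell-routing} with $d_R\le CR\log n$, and finish with coarea on the fiber. The convention check you flag is automatic, because $b_{\rm int}$ is defined with $s^{ab}_j\Pi$ and $\Pi s^{ab}_j$, which are literally the $B_L$ and $B_R$ boundaries of $\mathfrak S_{M_{ab}}$ in the definition of $C^\star_{M_{ab}}$.
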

\begin{proof}
Take an indicator and use \cref{lem:exceptional}, complementing it within this fiber if necessary; all boundaries and variances are unchanged by complementation. Equation \eqref{eq:assignment-control} bounds $2\E_\Pi\Var_\omega f$ by $CR\sqrt n B_M(f)$. At each fixed assignment, cell permutations are independent and have sizes at most $3n/R^2$. Product tensorization of the definition of $A$ gives
\[
 2\Var_\Pi f\le A(4n/R^2)b_{\rm int}(f;\omega).
\]
Multiply by $\1_G$, average, and use \cref{lem:cell-routing}. Since $\eta\le1/100$, the fixed factor $(1-2\eta)^{-1}$ is bounded. This proves the indicator inequality with a universal constant. Coarea on the fixed fiber proves \eqref{eq:fiber-recursion}.
\end{proof}

\subsection{The error-free recurrence}
\begin{proposition}[Static recurrence]\label{prop:static-recursion}
For universal constants $C,C_0$ and all sufficiently large $n$, if the integer $R$ satisfies $4\le R\le n^{1/8}$ and $n/R^2\ge C_0\log n$, then
\begin{equation}\label{eq:static-recursion}
 C_n^\star\le C\left[R^2\sqrt n+R\log n\,A(4n/R^2)\right].
\end{equation}
\end{proposition}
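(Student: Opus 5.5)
We prove \eqref{eq:static-recursion} by splitting $f$ into its coarse-table part and its within-fiber fluctuation, and treating unbalanced fibers as a rare set. By \cref{lem:coarea} it suffices to treat indicators $f=\1_E$ with $\alpha=U_n(E)\le1/2$. We use the near-equal partition into $R$ blocks and write $Q f=\E[f\mid M]$. The triangle inequality gives
\[
 \norm{f-\E f}_1\le\E_M\norm{f-\E[f\mid M]}_{1\mid M}+\norm{Qf-\E Qf}_1 .
\]
For the second term, \cref{lem:coarse-table} applies because $n/R^2\ge C_0\log n$; we apply it to $F(M)=Qf$. Combined with \eqref{eq:stable-average}, this bounds the term by $CR^2\sqrt n\,\cB_n(Qf)\le 4CR^2\sqrt n\,\cB_n(f)$.

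For the first term we split over balanced and unbalanced tables. On balanced tables, \cref{prop:fiber-recursion} bounds the conditional fluctuation by $C[R\sqrt n+R\log n\,A(4n/R^2)]B_M(f)$. Averaging over $M$ is harmless: moves internal to blocks preserve $M$, so $\E_M B_M(f)\le\cB_n(f)$. Since $R\sqrt n\le R^2\sqrt n$, this contribution already has the form claimed in \eqref{eq:static-recursion}.

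The main obstacle is the unbalanced tables, whose probability is
\[
 \eta_1=U_n(M\text{ not balanced})\le 2R^2e^{-cn/R^2},
\]
by \cref{lem:concentration} applied to each hypergeometric cell with mean $\approx n/R^2$. On these fibers we cannot invoke \cref{prop:fiber-recursion}, but the conditional centered $L^1$ norm of an indicator is at most $1/2$, so they contribute at most $\eta_1/2$. This gives
\[
 2\alpha(1-\alpha)\le C[R^2\sqrt n+R\log n\,A(4n/R^2)]\cB_n(f)+\eta_1/2 .
\]
We absorb the error as in earlier rare-set arguments. If $\alpha\ge2\eta_1$, the error is at most half the left side and can be absorbed. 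Otherwise $\log(1/\alpha)\ge cn/R^2-2\log R-O(1)\ge c'n/R^2$, once $C_0$ is large. In that case the permutation small-set bound \eqref{eq:smallset-RT}, routed by \eqref{eq:RT-route}, gives
\[
 \alpha\le \frac{C\log n}{n}\cdot\frac{R^2}{n}\cdot\frac{n^2}{2}\,B_{\rm path}
 \le CR^2\log n\,\cB_n(f).
\]
This bound is indeed at most $CR^2\sqrt n\,\cB_n(f)$. Note that $B_{\rm RT}$ for permutations is controlled by the position-side adjacent boundary $B_R$ via \eqref{eq:RT-route} with the prefactor $n^2/2$. Combining the rare coefficient $\log n/(n\log(1/\alpha))$ with $n^2$ is what yields $R^2\log n$.

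If this routing is too lossy, an alternative is to apply \eqref{eq:sqrt-boundary} with the crude bound $\sqrt{R_E}\le n$. We expect, however, that the logarithmic gain above suffices, and that $R\le n^{1/8}$ keeps all the cell-count concentration estimates valid. Coarea then extends the indicator bound to every real $f$, which proves \eqref{eq:static-recursion}.
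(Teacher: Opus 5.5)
Your proposal is correct and follows the paper's proof essentially step for step. It uses the same split into the within-fiber fluctuation (handled by \cref{prop:fiber-recursion} on balanced tables, with $\E B_M(f)\le\cB_n(f)$) and the coarse-table term (handled by \eqref{eq:coarse-table} and \eqref{eq:stable-average}), the same $\eta/2$ charge for unbalanced tables, and the same absorb-or-rare-set dichotomy. Your use of \eqref{eq:smallset-RT} followed by the routing \eqref{eq:RT-route} is exactly how the paper's \eqref{eq:smallset-path} with $k=1$ is derived, so it gives the same $CR^2\log n\,\cB_n(f)$ bound, and the fallback via \eqref{eq:sqrt-boundary} that you mention is not needed.
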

\begin{proof}
For the near-equal blocks, the cell means $L_aL_b/n$ equal $(n/R^2)(1+O(R/n))$. The fixed thresholds in \eqref{eq:balanced} therefore have, by \cref{lem:concentration}, an exceptional probability
\[
 \eta_T:=U_n(M\text{ unbalanced})\le CR^2e^{-cn/R^2}.
\]
Let $f=\1_E$, $\alpha=U_n(E)\le1/2$, and $v=2\alpha(1-\alpha)$. Split
\[
 \norm{f-\E f}_1\le\E\norm{f-Qf}_{1\mid M}+\norm{Qf-\E f}_1.
\]
On balanced tables use \eqref{eq:fiber-recursion}; on the others the centered indicator norm is at most $1/2$, contributing at most $\eta_T/2$. Since the within-fiber moves are physical moves, $\E B_M(f)\le\cB_n(f)$. Equations \eqref{eq:coarse-table} and \eqref{eq:stable-average} handle the second term. Thus, with $\mathcal C_n(R)$ the right coefficient in \eqref{eq:static-recursion},
\[
 v\le\mathcal C_n(R)\cB_n(f)+\eta_T/2.
\]
If $\alpha\ge\eta_T$, absorb the error using $v\ge\alpha$. Otherwise
$\log(1/\alpha)\ge cn/R^2-2\log R-O(1)\ge c'n/R^2$
by the stated threshold. Apply \eqref{eq:smallset-path} in the permutation case $k=1$ to obtain
\[
 v\le CR^2\log n\,\cB_n(f)\le CR^2\sqrt n\,\cB_n(f).
\]
Both cases give \eqref{eq:static-recursion} for indicators after enlarging its universal $C$. Coarea proves the recurrence for the optimal all-functions constant. Finally $\lfloor4n/R^2\rfloor\le n/4<n$, so the recurrence is genuinely downward.
\end{proof}

\subsection{Closing the recursion at the square-root exponent}
\begin{proof}[Proof of \cref{thm:static}]
First fix $\eta>0$. Choose $a>1/(2\eta)$ and $R=\lceil(\log n)^a\rceil$. For large $n$ the recurrence is admissible. If $C_m^\star\le K_\eta m^{1/2+\eta}$ for all $m<n$, then the recursive term in \eqref{eq:static-recursion} is at most
\[
 C_\eta K_\eta n^{1/2+\eta}\frac{\log n}{R^{2\eta}},
\]
which is at most half the target for all large $n$. The first term is $n^{1/2+o(1)}$ and occupies the other half for large $n$. Increase $K_\eta$ to handle the finitely many smaller indices using the crude finite constants. Strong induction proves the exponent $1/2+\eta$ for every $\eta>0$.

For the stated explicit envelope put $s=\log n$, $F(s)=\sqrt{s\log(s+2)}$, and choose $R=\lfloor e^{F(s)}\rfloor$. For large $n$, $4\le R\le n^{1/8}$ because $F(s)=o(s)$, and $n/R^2\ge C_0\log n$. Write $n'=\lfloor4n/R^2\rfloor$, so $s'=\log n'=s-2F(s)+O(1)$. The derivative
\[
 F'(u)=\frac{\log(u+2)+u/(u+2)}{2\sqrt{u\log(u+2)}}
\]
is bounded below by $c\sqrt{\log s/s}$ uniformly on $[s',s]$ for large $s$, since $s'=s-o(s)$. Therefore
\begin{equation}\label{eq:envelope-drop}
 F(s)-F(s')\ge c\log(s+2).
\end{equation}
Assume inductively $C_m^\star\le K\sqrt m\,e^{K_1F(\log m)}$ for $m<n$; the envelope is increasing, so it also bounds $A(n')$. Relative to $K\sqrt n e^{K_1F(s)}$, the first term of \eqref{eq:static-recursion} is at most $(C/K)e^{-(K_1-2)F(s)}$. The recursive term is at most
\[
 C R\sqrt{n'/n}\,\log n\,e^{-K_1(F(s)-F(s'))}
 \le 2C s\,(s+2)^{-cK_1},
\]
using $R\sqrt{n'/n}\le2$. Choose $K_1>2$ and $cK_1>2$, so both ratios are at most $1/2$ beyond a fixed size; then increase $K$ to cover the finite remainder. For $n\ge2$, the two positive quantities $\log(\log n+2)$ and $\log\log(n+2)$ are comparable up to universal factors: their ratio tends to one and is bounded on the remaining range. Changing constants therefore gives \eqref{eq:static-main}. The lower $c\sqrt n$ bound is proved independently in \cref{prop:static-lower} below.
\end{proof}

\section{Total-variation cutoff and its precise scope}\label{sec:tvproof}
Let $g_t=d\mu_t/dU_n$. The equality in law of a random time-$t$ deck and its inverse implies equality of its left and right adjacent gradient sums. Moreover,
\[
 \E_{U_n}|g_t(s_i\sigma)-g_t(\sigma)|
 =2\norm{s_i\mu_t-\mu_t}_{\TV}=2\delta_i(t).
\]
To check the right side, invert $\sigma$ in the right-gradient expectation and use $g_t(\sigma^{-1})=g_t(\sigma)$. Thus
\begin{equation}\label{eq:B-density}
 \cB_n(g_t)=4\sum_{i=1}^{n-1}\delta_i(t).
\end{equation}
Combining \cref{thm:static,cor:adj-discrepancy}, with $m=\lceil n^2\rceil$, gives
\begin{equation}\label{eq:tv-upper-master}
 d_n(2m+s)\le C_p\sqrt n
 \exp\!\left(C\sqrt{\log n\log\log(n+2)}\right)\rho_n^s,
 \qquad s\in\mathbb Z_{\ge0}.
\end{equation}
For any fixed $\delta>0$, take $2m+s=\lceil(1+\delta)t_n^\star\rceil$, which has $s\ge0$ for large $n$. By \cref{prop:killed-gap},
\[
 -\log\rho_n=\frac{\kappa+o(1)}{n^2}.
\]
The logarithm of the right side of \eqref{eq:tv-upper-master} is
$-(\delta/2)\log n+o(\log n)$ and tends to $-\infty$. The lower limit is \cref{prop:lower}. This proves \cref{thm:TV-main}, including the mixing-time asymptotic for every fixed $\eps$ by monotonicity of total variation.

\subsection{What the upper estimate does and does not say about a window}
For all sufficiently large $n$, a direct consequence of \eqref{eq:tv-upper-master} is
\begin{equation}\label{eq:tv-quantile}
 t_{\rm mix}^{(n)}(\eps)\le2\lceil n^2\rceil+
 \left\lceil\frac{\tfrac12\log n+C\sqrt{\log n\log\log(n+2)}+\log(C_p/\eps)}{-\log\rho_n}\right\rceil,
\end{equation}
where $C_p$ can be enlarged to at least one. Its spectral center is
\[
 t_n^{\rm sp}=\frac{\log n}{2(-\log\rho_n)}.
\]
The prefactor in the static inequality contributes an upper-time correction of order $n^2\sqrt{\log n\log\log n}$ relative to this exact spectral center. It does \emph{not} prove a window of that size about $t_n^\star$. To see the distinction, write
\[
 -\log\rho_n=\frac\kappa{n^2}(1+\varepsilon_n),\qquad\varepsilon_n\to0.
\]
Then
\[
 t_n^{\rm sp}-t_n^\star
 =-\frac{n^2\log n}{2\kappa}\frac{\varepsilon_n}{1+\varepsilon_n}.
\]
No rate for $\varepsilon_n$ was proved in \cref{prop:killed-gap}. It may not be absorbed into the displayed prefactor correction without another estimate. Thus the result here is leading-order cutoff, together with the $O_p(n^2)$ lower-side window, not a sharp TV window or a limiting TV profile. Improving the static prefactor alone would not remove this spectral-center issue.

\section{Relative-entropy cutoff}\label{sec:entropy}
\subsection{The external contraction theorem and its hypotheses}
For a finite metric space $(\mathcal X,d)$, define
\[
 W_{1,d}(\nu,\nu')=\min_{(X,Y)}\E d(X,Y),
\]
where the minimum is over couplings with the indicated marginals. The minimum exists because the coupling set is a compact finite-dimensional polytope. Its triangle inequality follows by gluing two couplings along their common marginal and applying the triangle inequality for $d$.
For a finite irreducible Markov kernel $K$ with invariant full-support law $\pi$, its reversed kernel is
\[
 K^*(x,y)=\frac{\pi(y)K(y,x)}{\pi(x)}.
\]
\begin{theorem}[Caputo--M\"unch--Salez]\label{thm:CMS}
Suppose for every $x,y$ there is a coupling of $K^*(x,\cdot)$ and $K^*(y,\cdot)$ whose distance is almost surely at most $d(x,y)$. If, for $0\le a\le1$,
\[
 W_{1,d}(\nu K,\nu'K)\le aW_{1,d}(\nu,\nu')
 \quad\text{for all initial laws }\nu,\nu',
\]
then
\[
 H(\nu K\mid\pi)\le a H(\nu\mid\pi)\quad\text{for every }\nu.
\]
\end{theorem}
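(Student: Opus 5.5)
This theorem is an imported input. The plan is therefore to check that the displayed hypotheses are exactly those of the Caputo--M\"unch--Salez theorem \cite{CaputoMunchSalez2025}, in the normalization used here, and to record the structure of their argument as the route I would follow if it had to be reproduced.

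\emph{Reduction to the reversed kernel.} Let $g=d\nu/d\pi$. A one-line computation gives
\[
 \frac{d(\nu K)}{d\pi}(y)=\sum_x K^*(y,x)g(x)=(K^*g)(y).
\]
Hence $H(\nu K\mid\pi)=\Ent_\pi(K^*g)$, and the claim becomes the functional statement
\[
 \Ent_\pi(K^*g)\le a\,\Ent_\pi(g)\qquad\text{for all }g\ge0,\ \E_\pi g=1.
\]
By duality, $W_1$ contraction of $K$ acting on measures is equivalent to $\operatorname{Lip}_d(K\varphi)\le a\operatorname{Lip}_d(\varphi)$ for every $\varphi$. This is Kantorovich--Rubinstein on a finite metric space, and it follows directly from the minimum over the finite coupling polytope. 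The nonexpansive almost-sure coupling of $K^*$ is a stronger, pointwise condition. It implies that $K^*$ maps $d$-Lipschitz functions to $d$-Lipschitz functions with no loss, and, more importantly, that it preserves pathwise monotone structure along couplings.

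\emph{Core step.} I would write the entropy variationally,
\[
 \Ent_\pi(K^*g)=\sup\bigl\{\E_\pi[\psi\,K^*g]:\ \E_\pi e^{\psi}=1\bigr\}=\sup\E_\pi[g\,K\psi].
\]
The optimizer is $\psi=\log K^*g$. The goal is then to compare $K\log K^*g$ with $\log g$ up to the factor $a$. The CMS argument performs this comparison by interpolating entropy along the coupled evolution. One represents $\Ent_\pi(K^*g)$ as an average, over the almost-surely nonexpansive coupling of $K^*$ started at pairs, of increments of $\log$-densities between coupled points. The $W_1$ contraction then controls the transport cost by $a$ times the corresponding cost for $g$. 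I would follow this order:
\begin{enumerate}[label=(\roman*)]
 \item verify the duality identities;
 \item build the glued coupling of $K^*$ along $d$-geodesics, so that pairwise nonexpansion extends to arbitrary pairs;
 \item insert the coupling into the variational formula and apply the contraction.
\end{enumerate}

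\emph{Main obstacle.} The delicate step is (iii). Entropy is not a Lipschitz functional, so $W_1$ contraction alone does not transfer to entropy. The almost-sure nonexpansiveness of the reverse coupling is exactly what allows the nonlinear logarithm to be passed through the coupling without losing the factor $a$. Reconstructing that argument from scratch is beyond what the present paper needs. In the write-up I would therefore state the normalization checks (finite irreducible $K$, full-support $\pi$, the definition of $K^*$, $0\le a\le1$) and cite the published theorem for (iii). Later sections verify the two hypotheses for the overhand kernel: the discrepancy coupling for contraction, and the reverse-kernel coupling for almost-sure nonexpansion.
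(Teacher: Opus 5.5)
Your proposal takes the same approach as the paper. The paper also treats this as an imported result, stated as Theorem~1 with Assumption~1 of Caputo--M\"unch--Salez, and your preliminary reductions are correct: $d(\nu K)/d\pi=K^*g$, the variational formula $\Ent_\pi(K^*g)=\sup_\psi\E_\pi[g\,K\psi]$, and the Kantorovich--Rubinstein equivalence with Lipschitz contraction. Your step (ii) is redundant, because the hypothesis already supplies almost-surely nonexpansive couplings for every pair; and your account of step (iii) is only heuristic. Since you, like the paper, defer that step to the published theorem, nothing is missing.
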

This is Theorem 1 with Assumption 1 of \cite{CaputoMunchSalez2025}. The almost-sure reverse-kernel condition is called nonnegative sectional curvature there. We use this precisely stated published theorem, not an assertion that every Wasserstein contraction transfers to entropy.

Let $d_{\rm tr}(\sigma,\eta)$ be the minimum number of arbitrary label transpositions transforming $\sigma$ into $\eta$. It is a metric: reversing a sequence gives symmetry and concatenating sequences gives the triangle inequality. Its generating set is invariant under conjugation, so multiplication of both arguments on either side by the same permutation preserves this distance; it is \emph{bi-invariant}.
Define
\[
 a_n(t)=\max_{z\in\cD}(T_n^t\1)(z).
\]
For neighbors in this metric, \cref{lem:single-coupling} gives expected distance at most $a_n(t)$. Apply the triangle inequality for $W_1$ along a shortest transposition path to obtain the same contraction for arbitrary point-mass starts. Conditional mixing of these couplings over an optimal coupling of initial laws gives
\begin{equation}\label{eq:W-contract}
 W_{1,d_{\rm tr}}(\nu P_n^t,\nu'P_n^t)
 \le a_n(t)W_{1,d_{\rm tr}}(\nu,\nu').
\end{equation}
This does not require a single coupling to realize all starting pairs simultaneously.

Apply \cref{thm:CMS} at the scale $K=P_n^t$ for an integer $t\ge1$. The kernel is irreducible: each positive one-step move also occurs in $t$ steps by inserting $t-1$ identity updates. It is symmetric, so $K^*=K$ with invariant law $U_n$. For the reverse-kernel assumption, multiply both starting decks on the right by the same random $t$-step product. This is a coupling of the required reverse laws, and its distance is exactly $d_{\rm tr}(\sigma,\eta)$ by bi-invariance. Thus the hypothesis holds at every scale used in \eqref{eq:W-contract}. We conclude
\begin{equation}\label{eq:entropy-contract}
 H(\nu P_n^t\mid U_n)\le a_n(t)H(\nu\mid U_n)
\end{equation}
for every initial law. The formula also holds at $t=0$ with $a_n(0)=1$.

\subsection{Entropy location and a diffusive window}
By \eqref{eq:a-smooth}, $a_n(m+s)\le C_p\rho_n^s$ for $m=\lceil n^2\rceil$. In view of \cref{prop:killed-gap}, choose a fixed $B_p>2$ sufficiently large that
\[
 a_n(b_n)\le1/2,\qquad b_n=\lceil B_pn^2\rceil,
\]
for all sufficiently large $n$. Equations \eqref{eq:entropy-contract} then give
\begin{equation}\label{eq:entropy-half}
 H(\nu P_n^{b_n}\mid U_n)\le\tfrac12 H(\nu\mid U_n).
\end{equation}

We record the two elementary entropy facts used next. The log-sum inequality, obtained from convexity of $x\log x$, implies $H(\nu K\mid\pi K)\le H(\nu\mid\pi)$ for a Markov kernel. Thus $H_n(t)$ decreases. It also implies, after grouping the positive and negative parts of $\nu-\pi$ into two events, that entropy dominates the corresponding binary entropy. For fixed binary reference mass $y\in(0,1)$, the second derivative in $x$ of $x\log(x/y)+(1-x)\log((1-x)/(1-y))$ is $1/[x(1-x)]\ge4$. Its value and first derivative vanish at $x=y$. Integration twice gives binary entropy at least $2(x-y)^2$, and therefore Pinsker's inequality
\begin{equation}\label{eq:pinsker}
 \norm{\nu-\pi}_{\TV}^2\le H(\nu\mid\pi)/2.
\end{equation}
Boundary cases follow by continuity or have zero TV. These observations require no additional functional-inequality input.

Since $H_n(\tau_n)\le1$, \eqref{eq:pinsker} gives $d_n(\tau_n)\le1/\sqrt2$. By \cref{prop:lower},
\[
 \tau_n\ge t_{\rm mix}^{(n)}(1/\sqrt2)
 \ge t_n^\star-O_p(n^2).
\]
Conversely $H_n(0)=\log(n!)\le n\log n$, and \eqref{eq:entropy-contract} gives
\[
 H_n(m+s)\le C_p\rho_n^s\log(n!).
\]
At $s=\lceil(1+\delta)n^2\log n/\kappa\rceil$ this tends to zero for each fixed $\delta>0$, by \cref{prop:killed-gap}. These estimates prove both location bounds for $\tau_n$ in \cref{thm:entropy-main}; in particular $\tau_n/n^2\to\infty$.

By the definition of $\tau_n$, $H_n(\tau_n)\le1$ and $H_n(\tau_n-1)>1$ for large $n$. Iterating \eqref{eq:entropy-half} gives, for integers $j\ge0$,
\begin{equation}\label{eq:entropy-window-discrete}
 H_n(\tau_n+jb_n)\le2^{-j},\qquad
 H_n(\tau_n-1-jb_n)>2^j
\end{equation}
whenever the earlier time is nonnegative. The second inequality follows by applying the contraction forward from that earlier time to $\tau_n-1$ and rearranging. For a fixed real $s>0$, take $j=\lfloor\lceil sn^2\rceil/b_n\rfloor$ in the first bound and $j=\lfloor(\lfloor sn^2\rfloor-1)/b_n\rfloor$ in the second, replacing a negative $j$ by zero when necessary. Monotonicity fills the gaps between block endpoints. As $n\to\infty$ these choices are bounded below by a quantity growing linearly in $s$ minus a constant. Letting $s\to\infty$ proves both window limits in \cref{thm:entropy-main}.

This proof used only \cref{prop:lower,prop:killed-gap,lem:smoothing,lem:single-coupling} and the external entropy theorem. It does not depend on the two-sided static inequality. A sharp first-order entropy location, an entropy profile, and a sharp total-variation window remain distinct refinements.

\appendix
\section{Intermediate bounds and the change of strategy}\label{app:progression}
This appendix proves the intermediate bounds rather than using them as premises for the final theorem. Their role is to identify where successive powers of the deck size were lost. The main proof in Sections~\ref{sec:static}--\ref{sec:tvproof} does not depend on the intermediate exponents proved here.

\subsection{The initial factor-two upper bound}
Any two permutations can be joined by at most $n-1$ arbitrary transpositions. For example, a cycle of length $j$ is a product of $j-1$ transpositions, and one may sum over the disjoint cycles of their relative permutation. By \cref{lem:single-coupling} and the triangle inequality,
\[
 \norm{P_n^t(\sigma,\cdot)-P_n^t(\eta,\cdot)}_{\TV}
 \le(n-1)a_n(t),\qquad a_n(t)=\max_{z\in\cD}(T_n^t\1)(z).
\]
Averaging $\eta$ against $U_n$, which is stationary, gives $d_n(t)\le(n-1)a_n(t)$. The smoothing estimate in the proof of \cref{lem:flux} therefore gives, with $m=\lceil n^2\rceil$,
\[
 d_n(m+s)\le C_p n\rho_n^s.
\]
It follows from $-\log\rho_n\sim\kappa/n^2$ that the resulting upper leading coefficient is $1/\kappa$, twice the lower coefficient $1/(2\kappa)$.

More generally, if a static argument gives $C_n^\star\le n^{a+o(1)}$, then \cref{cor:adj-discrepancy} and the density identity \eqref{eq:B-density} give
\begin{equation}\label{eq:exponent-rule}
 \limsup_{n\to\infty}\frac{t_{\rm mix}^{(n)}(\eps)}{n^2\log n}\le\frac a\kappa.
\end{equation}
This is a consequence of a static exponent, not a claim that every such upper bound is sharp.

\subsection{One-scale erasure gives the exponent five-sixths}
\begin{proposition}\label{prop:five-sixths}
There is a universal $C$ such that $C_n^\star\le Cn^{5/6}$ for every $n\ge2$.
\end{proposition}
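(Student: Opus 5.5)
The plan is a one-scale version of the recursive argument, with no recursion. Split
\[
 \norm{f-\E f}_1\le\norm{f-Qf}_1+\norm{Qf-\E Qf}_1,
\]
where $Q$ is the block average of \cref{lem:block-average} for $R$ near-equal consecutive blocks of length $L\approx n/R$, used on both labels and positions. The first term is the erasure cost. By \eqref{eq:block-erasure} it is at most $CL\,\cB_n(f)\le C(n/R)\cB_n(f)$. The second term is a function of the table $M$ alone. By \eqref{eq:coarse-table} and the stability \eqref{eq:stable-average}, it is at most $CR^2\sqrt n\,\cB_n(Qf)\le CR^2\sqrt n\,\cB_n(f)$, provided $n/R^2\ge C_0\log n$.

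The total coefficient is therefore $C(n/R+R^2\sqrt n)$. Balancing the two terms gives $R^3=\sqrt n$, that is $R=\lfloor n^{1/6}\rfloor$. Then $n/R\asymp n^{5/6}$ and $R^2\sqrt n\asymp n^{5/6}$. At this choice $n/R^2\asymp n^{2/3}$, which dominates $C_0\log n$ for all large $n$. The hypothesis of \cref{lem:coarse-table} is thus satisfied, and $C_n^\star\le Cn^{5/6}$ for all $n$ beyond a fixed threshold.

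For the finitely many small $n$, use the crude bound $C_n^\star\le Cn$ from \eqref{eq:crude-path} and enlarge the universal constant. This also covers the values where $R<4$ or the block lengths are too unequal.

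The only delicate point is that \eqref{eq:coarse-table} is applied to $F(M)=Qf$ while the boundary must be that of $f$. This is handled by \eqref{eq:stable-average}, which costs only a factor $4$. One also has to check that $Qf=\E[f\mid M]$ is genuinely table-measurable, which was shown before \eqref{eq:table-law}. I expect no further obstacle. The loss of exponent relative to \cref{thm:static} is exactly the erasure term $n/R$, which the later recursive fiber decomposition replaces by $R\log n\,A(4n/R^2)$.
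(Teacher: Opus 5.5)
Your proposal is correct and is essentially the paper's proof, parametrized by the number of blocks $R\asymp n^{1/6}$ rather than the block length $b\asymp n^{5/6}$. Both use the erasure bound \eqref{eq:block-erasure}, the coarse-table bound \eqref{eq:coarse-table} together with the stability \eqref{eq:stable-average}, balance $b$ against $R^2\sqrt n\asymp n^{5/2}/b^2$, and absorb the finitely many small $n$ into the constant.
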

\begin{proof}
Choose consecutive blocks of sizes comparable to an integer $b$, and let $Q$ average over label and position permutations within those blocks. There are $R\asymp n/b$ blocks. The erasure estimate in \cref{lem:block-average} gives
\[
 \norm{f-Qf}_1\le Cb\cB_n(f),\qquad \cB_n(Qf)\le4\cB_n(f).
\]
The averaged function is table-measurable. Its smallest cell mean satisfies $\lambda_*\asymp b^2/n$. Provided $b^2/n\ge C_0\log n$, \cref{lem:coarse-table} gives
\[
 \norm{Qf-\E f}_1\le CR^2\sqrt n\,\cB_n(Qf)
 \le C\frac{n^{5/2}}{b^2}\cB_n(f).
\]
Therefore
\[
 C_n^\star\le C\left(b+\frac{n^{5/2}}{b^2}\right).
\]
Take $b\asymp n^{5/6}$, using a partition into block sizes differing by at most one. The two terms are then of order $n^{5/6}$, and the density hypothesis holds for large $n$. Increase $C$ for the remaining finite set of deck sizes.
\end{proof}
Equation~\eqref{eq:exponent-rule} turns this into the intermediate mixing coefficient $5/(6\kappa)$.

\subsection{Local merging and the exponent two-thirds}
The preceding coarse-table step routes exchanges across the entire deck. A local merging argument replaces that one global operation by a sequence of less expensive operations.

\begin{proposition}\label{prop:two-thirds}
For $b^2/n\ge K\log n$ and $1\le b\le n$, with $K$ a sufficiently large universal constant,
\begin{equation}\label{eq:multiscale-bound}
 C_n^\star\le C\left(b+\frac n{\sqrt b}+\frac{n^2\log n}{b^2}\right).
\end{equation}
In particular,
\[
 C_n^\star\le Cn^{2/3}(\log n)^{1/3}\qquad(n\ge2).
\]
\end{proposition}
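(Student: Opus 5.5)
The plan is to replace the single global coarse-table step in the proof of \cref{prop:five-sixths} by a dyadic sequence of local merges. Fix blocks of size comparable to $b$ and, for $j\ge0$, let $Q^{(j)}$ be the two-sided block average of \cref{lem:block-average} for consecutive blocks of size $\ell_j\asymp2^jb$. Each level-$(j+1)$ block is the union of two neighbouring level-$j$ blocks, in both labels and positions, so $Q^{(j+1)}Q^{(j)}=Q^{(j+1)}$. Stop at the first level $J$ with $\ell_J\asymp n$, where there are $O(1)$ blocks. For an indicator $f$, telescope
\[
 f-\E f=(f-Q^{(0)}f)+\sum_{j<J}(Q^{(j)}f-Q^{(j+1)}f)+(Q^{(J)}f-\E f).
\]
The estimate \eqref{eq:block-erasure} bounds the first term by $Cb\,\cB_n(f)$. \Cref{lem:coarse-table} with $R=O(1)$ and \eqref{eq:stable-average} bound the last term by $C\sqrt n\,\cB_n(f)$. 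The boundary of every $Q^{(j)}f$ stays within a factor $4$ of $\cB_n(f)$. These three pieces give the contributions $b$ and $O(\sqrt n)\le O(n/\sqrt b)$.

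The main step is the one-level merge. Condition on the level-$(j+1)$ table. Then $Q^{(j)}f$ is a function of the refinement, which records how each coarse cell count splits between the two halves of its merged label block and the two halves of its merged position block. By \cref{lem:fiber-product}, these splits are independent across coarse blocks. Each split is a multivariate hypergeometric count vector of the kind in \cref{lem:hypergeom}, with capacities equal to the fine cell counts. Those counts are of order $\lambda_j=\ell_j^2/n\ge b^2/n\ge K\log n$ on balanced tables. The exchange boundary $\cE_{\rm ex}$ counts transpositions of cards in different halves of one merged block. As in \eqref{eq:middle-averaging} and \cref{lem:block-average}, it is bounded by the middle-edge boundary after within-half averaging. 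Routing a transposition across a midpoint costs a path of length $O(\ell_j)$, and the uniform averaging over pairs keeps congestion bounded. I would aim for a per-level estimate
\[
 \E\norm{Q^{(j)}f-Q^{(j+1)}f}_{1\mid M_{j+1}}\le C\frac n{\sqrt{\ell_j}}\cB_n(f)+(\text{unbalanced-table error}).
\]
This comes from combining the factor $\lambda_j^{-3/2}$ of \eqref{eq:hypergeom}, the normalization $\ell_j^2$ of the cross pairs, and the $O(\ell_j)$ routing. Summing over dyadic $\ell_j\ge b$ gives $Cn/\sqrt b$, which is a geometric sum. I expect this bookkeeping of powers of $\ell_j$ and $n$ to be the main obstacle. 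If the direct count loses a power, I would first use \eqref{eq:sqrt-local}, applied to the subgroup of within-merged-block transpositions, in place of \eqref{eq:hypergeom}.

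Finally, the exceptional terms are handled by absorption, as in \cref{prop:static-recursion}. Tables that are unbalanced at some level have probability at most $Cn^2e^{-c b^2/n}$, which is polynomially small once $b^2/n\ge K\log n$. So either $\alpha$ absorbs this error, or $\log(1/\alpha)\ge c\,b^2/n$. In the second case, apply \eqref{eq:smallset-RT} at the coarsest scale, or \eqref{eq:smallset-path} with $k=1$ on the finest block table with $R\asymp n/b$, followed by routing. This gives a coefficient of order $R^2\log n=n^2\log n/b^2$, the third term of \eqref{eq:multiscale-bound}. Coarea then gives the bound for all $f$.

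For the corollary, take $b\asymp n^{2/3}(\log n)^{1/3}$. The first and third terms are then of order $n^{2/3}(\log n)^{1/3}$, and $n/\sqrt b$ is smaller. The condition $b^2/n=n^{1/3}(\log n)^{2/3}\ge K\log n$ holds for large $n$, and the finitely many small $n$ are absorbed into $C$.
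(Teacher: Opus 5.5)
Your overall architecture matches the paper's. You use direct averages over nested dyadic label and position groups, so every intermediate boundary is at most $4\cB_n(f)$. You pay an initial erasure cost $Cb\,\cB_n(f)$, sum the per-level costs geometrically, absorb a polynomial-times-$e^{-cb^2/n}$ exceptional error, and treat the rare-set case by \eqref{eq:smallset-path} with $N=n$, $k=1$.

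\textbf{The gap is the one-level estimate}, which is the entire content of the proposition. Your primary route through \eqref{eq:hypergeom} does not give it. In a label merge at scale $\ell$, the capacities are cells of the current table, so $K\asymp\ell^2/n$. The exchange boundary sums over the $\asymp\ell^2$ cross pairs $u\in A_1$, $v\in A_2$. After within-half averaging (or routing with congestion at most $\ell^2$), it is at most $C\ell^2$ times the label boundary inside the block. The lemma therefore gives a per-level cost
\[
 CK^{-3/2}\ell^2\asymp\frac{n^{3/2}}{\ell},
\]
which exceeds $n/\sqrt\ell$ by the factor $\sqrt{n/\ell}$. Summed over $\ell_j\ge b$ this is $n^{3/2}/b$, and balancing against $b$ gives only $C_n^\star\lesssim n^{3/4}$.

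There is also a structural problem with merging labels and positions simultaneously. The refinement is then not a product of multivariate hypergeometric vectors with fine capacities. The row and column splits come from the assignment words, with coarse cell counts as capacities. Given those splits, the $2\times2$ split of a coarse cell is a further hypergeometric variable driven by the cell permutation. You should merge labels and positions in separate half-steps, as the paper does.

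Your fallback names the right inequality. However, \eqref{eq:sqrt-local} bounds $\norm{\1_E-Q_H\1_E}_1$ by $\E\sqrt{R_{E,H}}$, which is not a boundary functional, and you give no way to linearize it. \textbf{The missing idea is quantization of crossing counts.}

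Let $G_\ell$ be the event that every current table entry is at least $\lambda_\ell=c_0\ell^2/n$; its complement has probability $\eta_\ell\le n^2e^{-c\ell^2/n}$. Take a table-measurable event and a within-merged-block transposition of labels from cells $(a,b)$ and $(a',b')$. It changes the table exactly as all $M_{ab}M_{a'b'}$ such exchanges do. Hence on $G_\ell$ either $R_{E,H}=0$ or $R_{E,H}\ge\lambda_\ell^2$, so $\sqrt{R_{E,H}}\le R_{E,H}/\lambda_\ell$. Off $G_\ell$, use $\sqrt{R_{E,H}}\le\sqrt{Cn\ell}$. With gap parameter of order $\ell$ this yields
\[
 \norm{\1_E-Q_H\1_E}_1\le\frac{C}{\lambda_\ell\sqrt\ell}\,B^{\rm RT}_H(\1_E)+C\sqrt n\,\eta_\ell .
\]
Routing inside blocks of length $O(\ell)$ with congestion $C\ell^2$ then turns $\ell^2/(\lambda_\ell\sqrt\ell)$ into $n/\sqrt\ell$.

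Two further details. The intermediate averages are not indicators, so you must apply this to the level sets $\{F>u\}$ of the current table-measurable average, which are again table-measurable, and integrate in $u$. The additive error now carries a factor $\sqrt n$ and accumulates over $O(\log n)$ levels to $Cn^{5/2}\log n\,e^{-cb^2/n}$. Your absorption and rare-set step still handle this under $b^2/n\ge K\log n$.
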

\begin{proof}
We give the local estimate, sum it, and then remove its additive exceptional-event error.

\paragraph{A merging step.}
Suppose $F\in[0,1]$ depends only on a table whose current label and position blocks have sizes comparable to $\ell$. Merge neighboring label blocks, so the new label blocks have sizes comparable to $2\ell$, and let $Q_H$ perform the additional within-merged-block label averaging. The same proof will apply to position merging.

Set $\lambda_\ell=c_0\ell^2/n$, with a sufficiently small universal $c_0$, and let $G_\ell$ be the event that all current table entries are at least $\lambda_\ell$. Every cell has hypergeometric mean at least $2\lambda_\ell$. Thus \cref{lem:concentration} and a union bound over at most $n^2$ cells give
\begin{equation}\label{eq:merging-bad}
 \eta_\ell:=U_n(G_\ell^c)\le n^2e^{-c\ell^2/n}.
\end{equation}

For a level set $E_u=\{F>u\}$, let $R_{E_u,H}$ count crossing transpositions within merged label blocks. On $G_\ell$, any crossing table switch has at least $\lambda_\ell^2$ realizations by exchanging one label from each of two indicated cells. All these exchanges lie in the same merged label block. Hence $R_{E_u,H}=0$ or $R_{E_u,H}\ge\lambda_\ell^2$, so
\[
 \sqrt{R_{E_u,H}}\1_{G_\ell}\le\lambda_\ell^{-1}R_{E_u,H}.
\]
The total number of within-merged-block transpositions is at most $Cn\ell$. Applying the local square-root inequality \eqref{eq:sqrt-local}, whose gap parameter is of order $\ell$, gives
\[
 \norm{\1_{E_u}-Q_H\1_{E_u}}_1
 \le\frac C{\lambda_\ell\sqrt\ell}
 B_H^{\rm RT}(\1_{E_u})+C\sqrt n\,\eta_\ell.
\]
Here $B_H^{\rm RT}$ is the sum of the $L^1$ differences over the permitted complete transpositions. Integrating over $0\le u\le1$ and using Jensen for $F-Q_HF$ yields
\[
 \norm{F-Q_HF}_1
 \le\frac C{\lambda_\ell\sqrt\ell}B_H^{\rm RT}(F)
       +C\sqrt n\,\eta_\ell.
\]
Every permitted exchange has endpoints in a block of length $O(\ell)$. The path construction of \cref{lem:routing-basic}, restricted to each such block, has total congestion at most $C\ell^2$ at an adjacent edge. Therefore
\begin{equation}\label{eq:local-merge-cost}
 \norm{F-Q_HF}_1\le C\frac n{\sqrt\ell}B_L(F)+C\sqrt n\,\eta_\ell.
\end{equation}
The same estimate with $B_R$ holds for position merging.

\paragraph{Nested averaging.}
Start with consecutive blocks of sizes between constant multiples of $b$, and average an indicator $f=\1_E$ within both label and position blocks. This initial erasure costs at most $Cb\cB_n(f)$ by \cref{lem:block-average}. Then alternate label and position merging through scales $\ell_j\asymp2^jb$ until both partitions have one block. One construction is to start with a nearly balanced binary bisection tree of the interval $[n]$ and reverse its splits; at each level all lengths differ by a bounded factor. The intermediate label and position scales can differ by at most a factor two. This suffices for every constant in \eqref{eq:merging-bad}--\eqref{eq:local-merge-cost}.

A current function is always the \emph{direct} average of $f$ over the current label and position groups. Indeed, averaging over a coarser subgroup after a nested finer subgroup equals averaging over the coarser subgroup alone. The two sides commute. Thus \cref{lem:block-average} bounds each current boundary by $4\cB_n(f)$; there is no factor $4^j$ after $j$ levels. Summing the geometric costs gives
\[
 \sum_j\frac n{\sqrt{\ell_j}}\le C\frac n{\sqrt b}.
\]
The final average is the constant $\E f$. There are $O(\log n)$ merging operations and all $\ell_j\ge cb$, so the total additive error is bounded by
\[
 E_n(b)=C n^{5/2}\log n\,e^{-cb^2/n}.
\]
Consequently, for every set $E$,
\begin{equation}\label{eq:merging-indicator}
 2\alpha(1-\alpha)\le C\left(b+\frac n{\sqrt b}\right)\cB_n(\1_E)+E_n(b),
 \qquad\alpha=U_n(E).
\end{equation}
If $b$ is comparable to $n$, the same bound follows directly from the one-sided crude inequality, so possible terminal degenerate merges need not be used.

\paragraph{Rare sets.}
Complement $E$ to arrange $\alpha\le1/2$. For $\alpha\ge2E_n(b)$, the additive error in \eqref{eq:merging-indicator} is at most half the left side and is absorbed. If $\alpha<2E_n(b)$, then
\[
 \log(1/\alpha)\ge cb^2/n-\tfrac52\log n-\log\log n-O(1)
 \ge c'b^2/n
\]
under the assumed condition with $K$ sufficiently large. Applying \eqref{eq:smallset-path} to the full permutation multislice, with $N=n$ and $k=1$, gives
\[
 2\alpha(1-\alpha)\le2\alpha
 \le C\frac{n^2\log n}{b^2}\cB_n(\1_E).
\]
Together the two cases imply \eqref{eq:multiscale-bound} for indicators, and coarea gives arbitrary functions. Taking $b\asymp n^{2/3}(\log n)^{1/3}$ makes the first and third terms of that order and the middle term smaller. The hypothesis holds for large $n$; enlarge the final constant for the finite remainder.
\end{proof}

\subsection{Why recursive fibers improve on erasing blocks}
The terms $b$ and $n/\sqrt b$ in \eqref{eq:multiscale-bound} already balance at exponent $2/3$, even before accounting for rare sets. The initial $b$ is a one-sided erasure cost and cannot simply be replaced by the two-sided constant $C_b^\star$. For a uniform permutation of an even number $b$ of cards, let $E$ be the event that a specified card lies in the first half. Then
\[
 \norm{\1_E-1/2}_1=\frac12,\qquad B_R(\1_E)=\frac2b.
\]
Only the middle position edge contributes. Any one-sided constant valid for all such functions is therefore at least $b/4$.

The final proof retains rather than erases the internal permutations. The exact fiber decomposition \cref{lem:fiber-product} leaves independent smaller symmetric groups, and the exceptional-assignment inequality \cref{lem:exceptional} controls the long gaps needed to compare their generators. The resulting recursion \eqref{eq:static-recursion} is two-sided at the smaller permutation scale. Its proof is not an iteration of the one-sided erasure estimate, and this change is what permits the exponent $1/2+o(1)$.

\begin{center}
\begin{tabular}{@{}>{\raggedright\arraybackslash}p{.36\linewidth}>{\centering\arraybackslash}p{.25\linewidth}>{\centering\arraybackslash}p{.30\linewidth}@{}}
\toprule
Method & Static power, ignoring logarithms & TV upper coefficient in units of $1/\kappa$\\
\midrule
Direct discrepancy paths & $1$ & $1$\\
One-scale averaging & $5/6$ & $5/6$\\
Local multiscale merging & $2/3$ & $2/3$\\
Recursive conditional fibers & $1/2+o(1)$ & $1/2$\\
\bottomrule
\end{tabular}
\end{center}
The Wilson lower coefficient is $1/2$ in the same units throughout.

\subsection{A matching-order static lower bound}
\begin{proposition}\label{prop:static-lower}
The constants in \cref{thm:static} satisfy
\[
 \liminf_{n\to\infty}\frac{C_n^\star}{\sqrt n}
 \ge \sup_{c>0}\frac{1-e^{-c}}{2\sqrt c}>0.
\]
The supremum is approximately $0.319086$, attained at the positive solution of $e^c=1+2c$.
\end{proposition}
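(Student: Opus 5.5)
The plan is to exhibit, for each fixed $c>0$, an explicit event $E\subset\Sn$ and to compute both sides of the inequality in \cref{thm:static} asymptotically. By \cref{lem:coarea}, $C_n^\star\ge 2U_n(E)(1-U_n(E))/\cB_n(\1_E)$ for every nontrivial $E$, so it suffices to make this ratio $\bigl((1-e^{-c})/(2\sqrt c)+o(1)\bigr)\sqrt n$.

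\emph{Choice of test event.} Put $a=b=\lfloor\sqrt{cn}\rfloor$ and let
\[
 E=\{\sigma:\ \sigma(i)\notin\{1,\ldots,a\}\ \text{for all } i\le b\},
\]
the event that none of the labels $1,\ldots,a$ occupies positions $1,\ldots,b$. Under $U_n$ the positions of the labels $1,\ldots,a$ form a uniform $a$-subset. Hence
\[
 U_n(E)=\binom{n-b}{a}\Big/\binom na=e^{-ab/n}(1+o(1))\to e^{-c}.
\]
This follows from the product formula in \cref{lem:concentration} together with a matching lower estimate of each factor, $1-b/(n-j)$. Therefore $2\alpha(1-\alpha)\to 2e^{-c}(1-e^{-c})$.

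\emph{Boundary computation.} The key observation is that $E$ is invariant under every adjacent label swap $s_i$ with $i\ne a$, since such a swap either stays inside $\{1,\ldots,a\}$ or stays outside it. Likewise $E$ is invariant under every adjacent position swap $s_i$ with $i\ne b$. Thus $B_L(\1_E)$ reduces to the single term $\E|\1_E(s_a\sigma)-\1_E(\sigma)|$. The swap $s_a$ exchanges the labels $a$ and $a+1$, so it changes membership exactly when one of these two labels lies in positions $\le b$, the other does not, and the labels $1,\ldots,a-1$ avoid positions $\le b$. That probability is $2(b/n)e^{-c}(1+o(1))$. Symmetrically, $B_R(\1_E)=2(a/n)e^{-c}(1+o(1))$. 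Hence
\[
 \cB_n(\1_E)=\frac{4\sqrt c\,e^{-c}}{\sqrt n}(1+o(1)),
\]
and the ratio equals $\frac{1-e^{-c}}{2\sqrt c}\sqrt n\,(1+o(1))$. Taking $\liminf$ and then the supremum over $c$ gives the claim.

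\emph{Remaining details.} The only care needed is the uniformity of the $o(1)$ terms for fixed $c$; these are elementary hypergeometric ratio estimates, since $a,b=O(\sqrt n)$ and all the probabilities involved are explicit binomial quotients. The main point of the argument is identifying the event: the balance $a=b$ comes from minimizing $a+b$ subject to $ab=cn$, and this balance is what produces the $\sqrt n$ scale. Finally, maximize $\phi(c)=(1-e^{-c})/(2\sqrt c)$. Setting $\phi'=0$ gives $2ce^{-c}=1-e^{-c}$, that is, $e^c=1+2c$. A numerical evaluation at the root $c\approx1.256$ gives $\phi\approx0.319086$. Positivity of the supremum is immediate, for example from $\phi(1)>0$.
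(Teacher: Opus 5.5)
Your proposal is correct and matches the paper's proof: you use the same test event (labels $1,\ldots,m$ avoid positions $1,\ldots,m$, with $m=\lfloor\sqrt{cn}\rfloor$), the same observation that only the label swap $s_m$ and the position swap $s_m$ can change it, and the same limit $\alpha_{n,m}\to e^{-c}$. The paper writes the boundary exactly as $\cB_n(\1_{E_m})=4m\alpha_{n,m}/(n-m)$, using the conditional exit probability $m/(n-m)$, and it also proves that $e^c=1+2c$ has exactly one positive root, which is the maximizer because $\phi\to0$ at both ends. You should add that last check rather than only setting $\phi'=0$.
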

\begin{proof}
For $1\le m\le n/2$, let $E_m$ be the event that none of the labels $1,\ldots,m$ is in positions $1,\ldots,m$. Sampling without replacement gives
\[
 \alpha_{n,m}:=U_n(E_m)=\frac{\binom{n-m}{m}}{\binom nm}.
\]
Only the label transposition $s_m$ and the position transposition $s_m$ can change this event. Conditional on $E_m$, the probability that label $m+1$ lies in the first $m$ positions is $m/(n-m)$; this is the conditional exit probability for the label move. Similarly the probability that position $m+1$ contains a label at most $m$ is $m/(n-m)$, the exit probability for the position move. Since each transposition preserves $U_n$, its total expected indicator difference is twice its exit flow. Hence
\[
 \cB_n(\1_{E_m})=\frac{4m}{n-m}\alpha_{n,m},\qquad
 C_n^\star\ge\frac{(n-m)(1-\alpha_{n,m})}{2m}.
\]
Choose $m=\lfloor\sqrt{cn}\rfloor$ for fixed $c>0$. The product formula for $\alpha_{n,m}$ gives
\[
 \log\alpha_{n,m}
 =\sum_{j=0}^{m-1}\log\left(1-\frac m{n-j}\right)
 =-\frac{m^2}{n}+O\left(\frac{m^3}{n^2}\right)\longrightarrow-c.
\]
Division by $\sqrt n$ proves the displayed lower bound. Differentiating $(1-e^{-c})/(2\sqrt c)$ gives the equation $e^c=1+2c$. Convexity of the exponential, its derivative at zero, and its growth at infinity show that it has exactly one positive solution; the function tends to zero at both ends, so that solution is the maximum. The numerical approximation is not needed for the theorem.
\end{proof}

\section{Dependency and notation guide}\label{app:dependencies}
\subsection{External inputs and their precise uses}
The proof is self-contained relative to the following three stated, established functional-inequality results. We do not reproduce their original proofs.
\begin{enumerate}[leftmargin=*,label=(\roman*)]
\item The complete-graph specialization of the interchange-process spectral-gap theorem \cite{CaputoLiggettRichthammer2010}, in the normalization of \eqref{eq:external-poincare}. It is used for auxiliary complete-transposition Poincar\'e estimates and their products and quotients.
\item Salez's multislice log-Sobolev bound \cite{Salez2021}, in the normalization of \eqref{eq:external-LSI}. Its uses are the explicitly separated rare-set cases in the count, colored-word, coarse-table, and recursion arguments. No log-Sobolev statement about the overhand kernel itself is assumed.
\item The general Wasserstein-to-entropy contraction theorem of Caputo--M\"unch--Salez \cite{CaputoMunchSalez2025}, stated as \cref{thm:CMS}. Its reverse-kernel nonexpansiveness hypothesis is checked for $P_n^t$ at each integer scale at which it is used. It is needed only for the entropy theorem.
\end{enumerate}
The finite-grid Nash estimate, the square compactness and Poincar\'e arguments, the hypergeometric concentration estimates, coarea, product inequalities, and all model-specific couplings and counting comparisons are proved in the text. The analytic arguments use the elementary finite-dimensional spectral theorem, Fourier expansions on a square or torus, and the usual definitions of weak derivatives and Lebesgue integrals.

The logical separation of the main results is as follows. The lower bound depends on the exact one-card formula and Bernoulli variance tensorization. The pair-chain spectral and smoothing arguments imply the discrepancy estimates, without using the static inequality. The static inequality is a statement about uniform permutations proved without using the overhand chain. Combining those three independent parts gives the TV theorem. The entropy theorem uses the lower bound, pair estimates, and the external entropy-contraction theorem, but does not use the static inequality or the TV upper bound. Neither cutoff theorem uses the intermediate exponents in Appendix~\ref{app:progression}.

\subsection{Frequently used notation}
\renewcommand{\arraystretch}{1.12}
\begin{longtable}{@{}>{\raggedright\arraybackslash}p{.27\textwidth}>{\raggedright\arraybackslash}p{.67\textwidth}@{}}
\toprule
Notation & Meaning\\
\midrule\endhead
$\sigma(i)$; $s_i$ & Label at position $i$; the transposition $(i,i+1)$. Left multiplication exchanges labels, right multiplication exchanges positions.\\
$P_n$, $U_n$, $\mu_t$ & Packet-reversal kernel, uniform law on $\Sn$, and $\delta_{\id}P_n^t$.\\
$p,q,D,\kappa$ & Fixed cut probability, $1-p$, $q/p^2$, and $D\pi^2$.\\
$d_n(t)$, $H_n(t)$ & Total variation and relative entropy of $\mu_t$ from $U_n$.\\
$t_n^\star$, $\tau_n$ & The leading TV center $n^2\log n/(2\kappa)$ and the first integer time with $H_n(t)\le1$.\\
$K_n$, $f_k$, $\lambda_{n,k}$ & One-card kernel, its cosine modes, and their exact eigenvalues.\\
$\Phi$ & The full-deck cosine correlation used in the lower bound.\\
$\mathcal D_n$, $Q_n$, $T_n$ & Ordered unordered-pair state space $\{i<j\}$, its ordinary two-card kernel, and the killed discrepancy kernel.\\
$\rho_n$, $\beta_n$ & Spectral radius of $T_n$ and $1-\rho_n$. Entrywise nonnegativity is not positive semidefiniteness.\\
$\ell^2(\mathcal D_n)$ & Counting-measure norm, unlike the probability-measure norm $L^2(U_n)$.\\
$a_t$, $a_n(t)$ & The vector $T_n^t\1$ and its largest coordinate.\\
$B_L$, $B_R$, $\mathcal B_n$ & Left-adjacent boundary, right-adjacent boundary, and their sum.\\
$B_{\rm RT}$, $B_{\rm path}$ & Complete-transposition and adjacent-position $L^1$ boundaries on a multislice.\\
$C_n^\star$, $A(N)$ & Optimal two-sided centered-$L^1$ constant, and $\max_{1\le j\le\lfloor N\rfloor}C_j^\star$, with $C_1^\star=0$.\\
$M_{ab}$ & Number of labels from block $a$ occupying positions in block $b$.\\
$\omega$, $\Pi$ & The two families of assignment words, and the family of internal cell permutations, conditional on $M$.\\
$B_M(f)$, $b_{\rm int}(f;\omega)$ & Physical boundary within parent blocks on a fixed table fiber, and the conditional boundary of its cell permutations for fixed assignments.\\
$\Ent_\pi(F)$ & $\E_\pi(F\log F)-(\E_\pi F)\log\E_\pi F$ for $F\ge0$.\\
$d_{\rm tr}$, $W_{1,d}$, $K^*$ & Arbitrary-transposition metric, minimum expected metric distance over couplings, and reversed Markov kernel.\\
\bottomrule
\end{longtable}

\end{document}